  \def\@thmcountersep{.}
\newtheorem{theo}{Theorem}[section]
\newtheorem{cor}{Corollary}[section]
\newtheorem{lem}{Lemma}[section]
\newtheorem{rem}{Remark}[section]
\newtheorem{defi}{Definition}[section]
\def\qed{\hfill $\Box$}
\newcommand{\mc}{\mathcal}
\newcommand{\mb}{\mathbb}
\newcommand{\Honezero}{H^1_0(\Omega)}
\newcommand{\ra}{\rightarrow}
\newcommand{\f}{\frac}
\begin{document}

\title{A posteriori verification for the sign-change structure of solutions of elliptic partial differential equations
\thanks{This work was supported by JST CREST Grant Number JPMJCR14D4, and JSPS KAKENHI Grant Number 19K14601, and the Mizuho Foundation for the Promotion of Sciences.}
}


\author{Kazuaki Tanaka}


\institute{K. Tanaka \at
              Institute for Mathematical Science, Waseda University, 3-4-1, Okubo Shinjuku-ku, Tokyo 169-8555, Japan \\
              Tel.: +81-3-5286-2923
              \email{tanaka@ims.sci.waseda.ac.jp}           
}

\date{Received: date / Accepted: date}

\maketitle

\begin{abstract}
	This paper proposes a method for rigorously analyzing the sign-change structure of solutions of elliptic partial differential equations subject to one of the three types of homogeneous boundary conditions: Dirichlet, Neumann, and mixed.
	Given explicitly estimated error bounds between an exact solution $ u $ and a numerically computed approximate solution $ \hat{u} $,
	we evaluate the number of sign-changes of $ u $ (the number of nodal domains) and determine the location of zero level-sets of $ u $ (the location of the nodal line).
	We apply this method to the Dirichlet problem of the Allen–Cahn equation.
	The nodal line of solutions of this equation represents the interface between two coexisting phases.
\keywords{Numerical verification \and Sign-change structure \and Elliptic differentical equations \and Allen–Cahn equation \and Computer-assisted proof \and Verified numerical computation}
 \subclass{35J25 \and 35J61 \and 65N15}
\end{abstract}

\section{Introduction}\label{sec:1}
Numerical verification methods for partial differential equations have been developed in recent decades.
Such methods were first proposed in  \cite{nakao1988numerical,plum1991computer} and have been further developed by many researchers (see the recent survey book \cite{nakaoplumwatanabe2019numerical} and the references therein).
These approaches are also known as computer-assisted proofs, validated numerics, or verified numerical computations for partial differential equations
and have been applied to various problems, including some for which purely analytical methods have failed.
One such successful application is to the semilinear elliptic equation
\begin{align}
	\label{eq:main}
	-\Delta u(x)=f(u(x)), ~~x \in \Omega
\end{align}
with appropriate boundary conditions, 
where $\Delta $ is the Laplacian,
$\Omega \subset \mathbb{R}^{N}$~$(N=2,3,\cdots)$ is a bounded domain with a Lipschitz boundary,
and $ f: \mb{R} \ra \mb{R}$ is a nonlinear map
(see, for example, the numerical results in \cite{mckenna2009,plum2008,nakao2011numerical,mckenna2012computer,takayasu2013verified,tanaka2017sharp,nakaoplumwatanabe2019numerical}).
Further regularity assumptions for $\Omega$ and $f$ will be shown later for our setting.
Hereafter, $ H^k(\Omega) $ denotes the $k$-th order $ L^2 $ Sobolev space.
We define $ \Honezero := \{ u \in H^1(\Omega) : u = 0~\mbox{on} ~ \partial \Omega\} $, with the inner product $(u, v)_{H^1_0}:=(\nabla u, \nabla v)_{L^2}$ and norm $\| u \|_{H^1_0} := \sqrt{(u, u)_{\smash{H^1_0}}}$.

Numerical verification methods enable us to obtain an explicit ball containing exact solutions of \eqref{eq:main}.
More precisely, for a ``good'' numerical approximation $ \hat{u} \in \Honezero $, they enable us to prove the existence of an exact solution $ u \in H^1_0(\Omega) $ of \eqref{eq:main} that satisfies
\begin{align}
	\left\|u-\hat{u}\right\|_{H_{0}^{1}} \leq \rho \label{h10error}
\end{align}
with an explicit error bound $ \rho>0 $.
Additionally, under an appropriate condition, we can obtain an $ L^{\infty} $-estimation
\begin{align}
\left\|u-\hat{u}\right\|_{L^{\infty}} \leq \sigma \label{linferror}
\end{align}
with bound $ \sigma>0 $.
For instance, when $ u,\hat{u} \in H^2(\Omega) $, we can evaluate the $ L^{\infty} $-bound $ \sigma>0 $ by considering the embedding $ H^2(\Omega) \hookrightarrow L^{\infty}(\Omega) $;
details are discussed later in this section.
Thus, this approach has the advantage that quantitative information about the solutions of a target equation is provided accurately in a strict mathematical sense.
From the error estimates, we can identify the approximate shapes of solutions.
Despite these advantages, information about the  sign change of solutions is not guaranteed without additional considerations, irrespective of how small the error bound ($\rho$ or $\sigma$) is.
To be more precise, we introduce the following.
\begin{defi}
	\label{defi:nodaldomains}
	For $ u: \Omega \rightarrow \mathbb{R} $, the connected components of the open sets
	$$
	\{x \in \Omega~:~u(x)>0\}
	~~and~~
	\{x \in \Omega~:~u(x)<0\}
	$$
	are called {\bf the nodal domains} of $u$ and denoted by {\rm N.D.}$(u)$.
	In particular, $\{x \in \Omega~:~u(x)>0\}$ contains the positive nodal domains of $ u $ and is denoted by {\rm P.N.D.}$(u)$, and $\{x \in \Omega~:~u(x)<0\}$ contains the negative nodal domains of $ u $ and is denoted by {\rm N.N.D.}$(u)$.
	
	The zero level-set
	$$
	\{x \in \Omega~:~u(x)=0\}
	$$
	is called {\bf the nodal line} of $u$.
\end{defi}
According to the above definition, nodal lines do not contain the boundary of $ \Omega $; however,
we interpret zero-Dirichlet boundaries as parts of nodal lines when we apply this later to the Allen–Cahn equation (see Subsection \ref{subsec:ex}).

An essential problem is that $\#{\rm N.D.}(u)$ (the number of nodal domains) does not generally coincide with $\#{\rm N.D.}(\hat{u})$ (see Fig.~\ref{fig:nontrivial_nodalline}).
For example, when $ u $ is imposed on the homogeneous Dirichlet boundary conditions,
it is possible for $ u $ to be negative near the boundary $ \partial \Omega $ even when $ \hat{u} $ is positive in $ \Omega $.
In previous studies, we developed methods for verifying the positivity of solutions of \eqref{eq:maind} \cite{tanaka2015numerical,tanaka2017numerical,tanaka2017sharp,tanaka2020numerical}.
These methods succeeded in verifying the existence of positive solutions with precise error bounds by checking simple conditions, but determining the sign-change structure has been out of scope.
\begin{figure}[h]
	\begin{center}
		\includegraphics[height=40 mm]{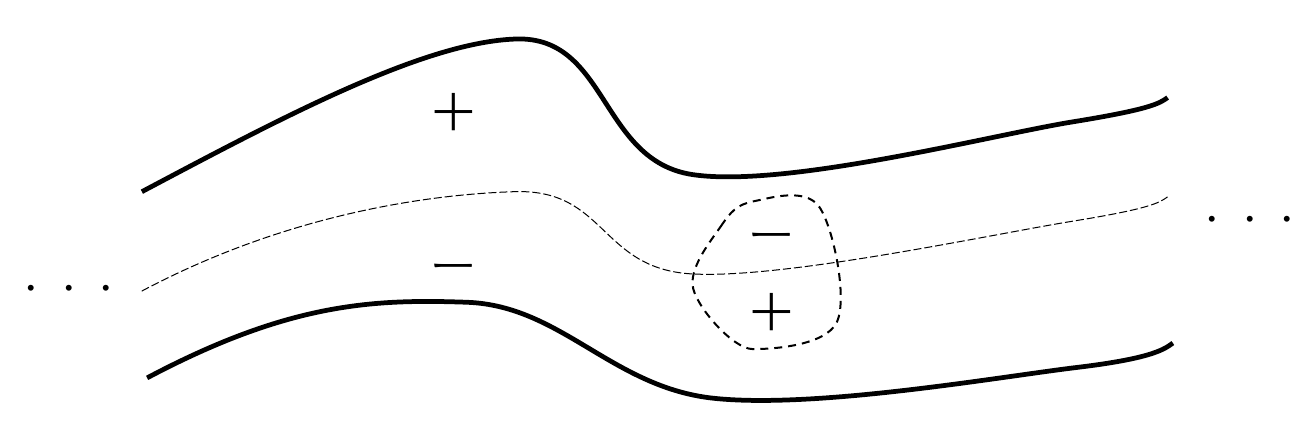}
	\end{center}
	\caption{Conceptual figure for the area in which $ (\hat{u}-\sigma)(\hat{u}+\sigma)<0 $ between the two solid lines.
	Nodal lines of $ u $ lay inside the area and do not exist outside.
	Regardless of how small $ \sigma>0 $ is, we cannot deny the possibility that there exist $ ( $small$ ) $ nodal domains in the area only from the error estimations $ \rho $ and/or $ \sigma $.
	If the nonexistence of nodal domains inside the area is proved, we can estimate $\#{\rm N.D.}(u)$ and determine the topology of the nodal lines $ ( $that is, how the lines intersect$ ) $.
	}
	\label{fig:nontrivial_nodalline}
\end{figure}

The main contribution of this paper is a proposed method for verifying the sign-change structure of solutions $ u $ of \eqref{eq:main} subject to one of the three types of homogeneous boundary value conditions---Dirichlet type, Neumann type and mixed type---while assuming the error estimations \eqref{h10error} and \eqref{linferror}.
If error bounds are sufficiently precise, our theorems can be applied to the case in which $ f $ is a subcritical polynomial 
\begin{align*}
f(t) = \lambda t + \sum_{i=2}^{n(<p^*)}a_{i}t|t|^{i-1},~~\lambda,~a_i \in \mb{R},~a_i \neq 0 \text{~~for~some~~} i,
\end{align*}
where
$p^*=\infty$ when $N=2$ and $p^*=(N+2)/(N-2)$ when $ N\geq3 $.	
They are also applicable to more general nonlinearities other than polynomials (see Theorems \ref{theo:main} and \ref{theo:mix}).
In the later sections, we discuss the applicability of our method to the Dirichlet problem
\begin{align}
\label{eq:maind}
\left\{\begin{array}{l l}
-\Delta u(x)=f(u(x)), &x \in \Omega,\\
u(x)=0,  &x \in \partial\Omega,\\
\end{array}\right.
\end{align}
the Neumann problem
\begin{align}
\label{eq:mainn}
\left\{\begin{array}{l l}
-\Delta u(x)=f(u(x)), &x \in \Omega,\\
\frac{\partial u}{\partial n}(x)=0,  &x \in \partial\Omega,\\
\end{array}\right.
\end{align}
and the mixed boundary value problem
\begin{align}
\label{eq:mainm}
\left\{\begin{array}{l l}
-\Delta u(x)=f(u(x)), &x \in \Omega,\\
u(x)=0,  &x \in \Gamma_D,\\
\frac{\partial u}{\partial n}(x)=0,  &x \in \Gamma_N,\\
\end{array}\right.
\end{align}
Here, $\Gamma_D $ is a relatively open subset of $ \partial \Omega $ and $\Gamma_N = \partial \Omega \backslash \overline{\Gamma_D} $.
We allow $\Gamma_D $ (or $\Gamma_N$) to be empty to unify \eqref{eq:mainn} (or \eqref{eq:maind}) with \eqref{eq:mainm}; otherwise,
we assume that both $\Gamma_D$ and $\Gamma_N$ are connected sets such that $\overline{\Gamma_D} \cap \overline{\Gamma_N} $ is an $(N-2)$-dimensional Lipschitz submanifold of $\partial \Omega$.
Therefore, when $\Omega \subset \mathbb{R}^2$ is simply connected, the intersection $\overline{\Gamma_D} \cap \overline{\Gamma_N} $ is composed of two points.
This assumption is not essential for our theory but can be weakened (see Remark \ref{rem:th31weekend}).

To our knowledge, $H^2$-regularity of solutions $u$ of the above elliptic problems (\eqref{eq:maind}, \eqref{eq:mainn}, or \eqref{eq:mainm}) is required for obtaining an $ L^{\infty} $-bound $ \sigma $ using existing methods.
We obtain an explicit bound for embedding $ H^2(\Omega) \hookrightarrow L^{\infty}(\Omega) $ using \cite[Theorem 1]{plum1992explicit}.
Moreover, we require an explicit bound $C>0$ that satisfies
\begin{align}
	\label{ineq:Hesse}
	\|v_{xx}\|_{L^2} \leq C\|\Delta v\|_{L^2}
\end{align}
for all $v \in H^2(\Omega)$ satisfying the boundary condition in \eqref{eq:maind}, \eqref{eq:mainn}, or \eqref{eq:mainm} in the trace sense, where $v_{xx}$ denotes the Hesse matrix of $v$.
When $\Omega$ is a polygonal domain, we have $\|v_{xx}\|_{L^2} = \|\Delta v\|_{L^2}$ for such $v$ and therefore can set $C=1$ (see \cite{grisvard2011elliptic}).
Combining the ideas from \cite[Section 4]{plum1992explicit} and \cite[Section 6.2.7]{nakaoplumwatanabe2019numerical} looks promising to prove inequality \eqref{ineq:Hesse} for more general domains, including in higher-dimensional cases.
The $ L^{\infty} $-bound $ \sigma $ can be derived by applying the embedding bound for $ H^2(\Omega) \hookrightarrow L^{\infty}(\Omega) $ and inequality \eqref{ineq:Hesse} to the error $u-\hat{u}$
when $\hat{u} \in H^2(\Omega)$ fulfills the same boundary condition imposed on $u$.
In this way, we obtain an $ L^{\infty} $-bound $ \sigma $ for the Dirichlet problem of the Allen-Cahn equation in Section \ref{subsec:ex}.
We believe that future methods can be developed to obtain $ L^{\infty} $-bounds without assuming $H^2$-regularity because weak solutions of these three problems always belong to $L^{\infty}(\Omega)$ when $ f $ is subcritical (see \cite[Corollary 6.6]{daners2009priori}).

We briefly explain some known facts about the $H^2$-regularity of solutions of the Poisson problem
\begin{align}
	\left\{\begin{array}{l l}
	-\Delta u=h &\mathrm{~in}\ \Omega,\\
	{\rm B.C.} &\mathrm{~on}\ \partial\Omega\\
	\end{array}\right. \label{eq:poisson}
\end{align}
given $h \in L^2(\Omega)$ and $\Omega \subset \mathbb{R}^2$ with corners,
where B.C. represents one of the three types of homogeneous boundary value conditions mentioned above.
For the zero-Dirichlet or zero-Neumann cases, i.e., when ${\rm B.C.}$ is replaced with $u=0$ or $\frac{\partial u}{\partial n}=0$,
solutions $u$ of \eqref{eq:poisson} have $H^2$-regularity if $\Omega$ is convex and has a piecewise $C^2$-boundary (see, for example, \cite{grisvard2011elliptic} and \cite[Subsection 5.3]{azegami2020boundary}).
For the mixed case, i.e., when ${\rm B.C.}$ is replaced with the boundary condition of \eqref{eq:mainm}, the opening angle $\alpha_{x_0}$ at a corner $x_0 \in \partial \Omega$ between $\Gamma_D $ and $\Gamma_N$ is essential for $H^2$-regularity.
If $\alpha_{x_0} \leq \frac{\pi}{2}$, solutions $u$ of \eqref{eq:poisson} have $H^2$-regularity around $x_0$ (see \cite[Subsection 5.3]{azegami2020boundary} for details).

The remainder of this paper is organized as follows.
In Section \ref{sec:diri}, we focus on the Dirichlet problem \eqref{eq:maind}, propose a method to estimate the number of nodal domains of solutions $ u $ and discuss the applicability of this method.
This section contains numerical applications of the method to the Allen--Cahn equation.
For several verified solutions, the number of nodal domains is estimated and then the locations of nodal lines are determined (see Subsection \ref{subsec:ex}).
Subsequently, in Section \ref{sec:othreboundary}, we extend our method to the other boundary value conditions: the Neumann type \eqref{eq:mainn} and mixed type \eqref{eq:mainm}.
\section{Verification for sign-change structure --- the Dirichlet case \eqref{eq:maind}}\label{sec:diri}
In this section, we limit our focus to the Dirichlet problem \eqref{eq:maind}.
Our scope will be extended in Section \ref{sec:othreboundary}.	

We begin by introducing required notation.
We denote $ V=\Honezero $ and $ V^*=$ (the topological dual of $ V $).
For two Banach spaces $X$ and $Y$, the set of bounded linear operators from $X$ to $Y$ is denoted by ${\mc L}(X, Y)$ with the usual supremum norm $\| T \|_{{\mc L}(X, Y)} := \sup\{ \| T u \|_{Y} / \| u \|_{X} : {u \in X \setminus \{0\}}\}  $ for $T \in {\mc L}(X, Y)$.
A norm bound for the embedding $V\hookrightarrow L^{p+1}\left(\Omega\right)$ is denoted by $C_{p+1}(=C_{p+1}(\Omega))$; that is, $C_{p+1}$ is a positive number that satisfies
\begin{align}
\label{embedding}
\left\|u\right\|_{L^{p+1}(\Omega)}\leq C_{p+1}\left\|u\right\|_{V}~~~{\rm for~all}~u\in V,
\end{align}
where $p\in [1,\infty)$ when $N=2$ and $p\in [1,p^*]$ when $ N\geq3 $.
If no confusion arises, we use the notation $ C_{p+1} $ to represent the embedding constant on the entire domain $ \Omega $,
whereas, in some parts of this paper, we must consider an embedding constant on some subdomain $ \Omega'\subset\Omega $.
This is denoted by $ C_{p+1}(\Omega') $ to avoid confusion.
Moreover, $ \lambda_1(\Omega) $ denotes the first eigenvalue of $ -\Delta $ imposed on the homogeneous Dirichlet boundary condition.
This is characterized by
\begin{align}
\label{eq:defi-diri-eigenvalue}
\lambda_{1}(\Omega) = \inf_{v\in V^\backslash{\{0\}}} \frac{\|v\|_{V}^2}{\|v\|_{L^2}^2}.
\end{align}		
Note that, when domains $\Omega_1, \Omega_2 \subset \mathbb{R}^N$ satisfy $\Omega_1 \subset \Omega_2 $, $C_{p+1}(\Omega_2)$ can be used as a bound $C_{p+1}(\Omega_1)$ by considering the zero-extension outside $\Omega_1$ to $\Omega_2$ for $u \in H^1_0(\Omega_1) \subset H^1_0(\Omega_2)$.
In the same way, we confirm $\lambda_1(\Omega_1)\geq \lambda_1(\Omega_2)$.

Throughout this paper, we assume that $ f $ is a $ C^1 $ function that satisfies
\begin{align*}
&|f(t)| \leq a_0 |t|^p + b_0 \text{~~~for~all~} t \in \mb{R},\\
&|f'(t)| \leq a_1 |t|^{p-1} + b_1 \text{~~~for~all~} t \in \mb{R}
\end{align*}
for some $ a_0,a_1,b_0,b_1\geq 0 $ and $ p<p^* $.
We define the operator $ F $ by
\begin{align*}
F : \left\{\begin{array}{ccc}{u(\cdot)} & {\mapsto} & {f(u(\cdot))}, \\
{V} & {\rightarrow} & {V^{*}}.\end{array}\right.
\end{align*}
Moreover, we define another operator $ \mathcal{F} : V \rightarrow V^{*}$ by $\mathcal{F}(u) :=-\Delta u-F(u) $, which is characterized by
\begin{align}
\left<\mathcal F(u),v\right> = \left(\nabla u,\nabla v\right)_{L^2} - \left<F(u),v\right>  \text{~~~for~all~} u,v \in V,
\end{align}
where $\left<F(u),v\right> = \int_{\Omega} f(u(x)) v(x) dx$.
The Fr\'echet derivatives of $ F $ and $ \mc{F} $ at $ \varphi \in V $, denoted by $ {F'_{\varphi}} $ and $ {\mc F'_{\varphi}} $, respectively, are given by
\begin{align}
&\langle F'_{\varphi}u,v\rangle = \int_{\Omega} f'(\varphi(x))u(x) v(x) dx \text{~~~for~all~} u,v \in V,\\
&\langle \mathcal F'_{\varphi}u,v \rangle = \left(\nabla u,\nabla v\right)_{L^2} - \langle F'_{\varphi}u,v \rangle  \text{~~~for~all~} u,v \in V. \label{def:derivativecalf}
\end{align}
Under the notation and assumptions, we look for solutions $ u \in V $ of 
\begin{align}
\label{main:fpro}
\mathcal{F}(u)=0,
\end{align}
which corresponds to the weak form of \eqref{eq:maind}.
We call this the {\it D-problem} to prevent confusion with the other boundary value problems to be discussed in Section \ref{sec:othreboundary}.
Recall that the weak solution $ u \in V $ of the D-problem is in $L^{\infty}(\Omega)$; see \cite[Corollary 6.6]{daners2009priori}.
We assume that some numerical verification method succeeds in proving the existence of a solution $u \in V \cap L^{\infty}(\Omega)$ of \eqref{main:fpro} in the intersection of
\begin{align}
&\overline{B}(\hat{u},\rho,\|\cdot\|_V) := \left\{v\in V : \left\|v-\hat{u}\right\|_{V}\leq\rho\right\} \label{eq:h10ball},\\
&\overline{B}(\hat{u},\sigma,\|\cdot\|_{L^{\infty}}) := \left\{v\in L^{\infty}(\Omega) : \left\|v-\hat{u}\right\|_{L^{\infty}}\leq\sigma\right\} \label{eq:linfball}
\end{align}
given $ \hat{u} \in V \cap L^{\infty}(\Omega) $ and $ \rho,\sigma >0 $.
Although the regularity assumption for $ \hat{u} $ (to be in $ \in V \cap L^{\infty}(\Omega) $) is theoretically sufficient to obtain the error bounds \eqref{eq:h10ball} and \eqref{eq:linfball},
we further assume that $ \hat{u} $ is continuous or piecewise continuous.
This assumption impairs little of the flexibility of actual numerical verification methods.
Indeed, past verification was implemented with such approximate solutions $ \hat{u} $;
again, see \cite{mckenna2009,plum2008,nakao2011numerical,mckenna2012computer,takayasu2013verified,tanaka2017sharp,nakaoplumwatanabe2019numerical}.
Then, we use the following notation:
\begin{itemize}
	\item[] $\Omega_{+}:=\{x\in\Omega : \hat{u}-\sigma>0\}$ where $ u>0 $ therein;
	\item[] $\Omega_{-}:=\{x\in\Omega : \hat{u}+\sigma<0\}$ where $ u<0 $ therein;
	\item[] $\Omega_{0}:= \Omega \backslash(\Omega_{+}\cup\Omega_{-})$.
\end{itemize}
The subset $ \Omega_0 $ approximates the nodal line of $ u $, and therefore the location of $ \Omega_0 $ is essential for determining the topology of the nodal line.
In practice, $\Omega_{+}$ and $\Omega_{-}$ are set to a subset of $\{x\in\Omega : \hat{u}-\sigma>0\}$ and $\{x\in\Omega : \hat{u}+\sigma<0\}$), respectively, then $ \Omega_{0} $ is defined as above.
This generalization can be applied directly to our theory.
We assume the following geometric properties:
\begin{itemize}
	\item[] $\Omega_{+}$, $\Omega_{-}$, and $\mathring\Omega_{0}$ are Lipschitz subdomains composed of a finite number of connected components, where $\mathring\Omega_{0}$ denotes the interior of $\Omega_{0}$.
	\item[] $ \Omega_{0} $ is not empty, and satisfies $\Omega_{0} = \overline{\mathring\Omega_{0}} \cap \Omega$.
	\item[] $ \sigma $ is small so that $ \Omega_{0} \neq \Omega $.
\end{itemize}
\subsection{Main theorem}\label{subsec:maintheo}
The following lemma plays an essential role in our main result.
\begin{lem}\label{lem:positive}
	Let $ f $ satisfy
	\begin{align}
	\label{f:lem1}
	tf(t)\leq \lambda t^2 + \displaystyle \sum_{i=1}^{n}a_{i}|t|^{p_{i}+1} \text{~~~for~all~}t \in \mathbb{R}
	\end{align}
	for some $ \lambda < \lambda_1(\Omega) $, nonnegative coefficients $ a_1,a_2,\cdots,a_n $, and subcritical exponents $ p_1,p_2,\cdots,p_n \in (1,p^*)$.
	If a solution $ u \in V $ of the D-problem \eqref{main:fpro} satisfies the inequality
	\begin{align}
	\label{cond:lem1}
	\displaystyle \sum_{i=1}^{n}a_i C_{p_{i}+1}^{2} \left\|u\right\|_{L^{p_{i}+1}}^{p_{i}-1}<1 - \f{\lambda}{\lambda_1(\Omega)},
	\end{align}		
	then $ u $ is the trivial solution $ u\equiv 0 $, where $ C_{p_{i}+1}=C_{p_{i}+1}(\Omega) $.
\end{lem}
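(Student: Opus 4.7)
The plan is to test the weak equation $\mc{F}(u)=0$ against $v=u$ itself, giving
\[
\|u\|_V^2 = \langle F(u),u\rangle = \int_\Omega u(x)\,f(u(x))\,dx.
\]
This converts the unknown right-hand side into an integral of $t f(t)$ to which the pointwise hypothesis \eqref{f:lem1} can be applied. The resulting estimate
\[
\|u\|_V^2 \leq \lambda \|u\|_{L^2}^2 + \sum_{i=1}^{n} a_i \|u\|_{L^{p_i+1}}^{p_i+1}
\]
is then pushed into a single inequality on $\|u\|_V^2$ using two ingredients already available in the paper.

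First I would handle the $\lambda t^2$ term using the variational characterization \eqref{eq:defi-diri-eigenvalue} of $\lambda_1(\Omega)$, which yields $\|u\|_{L^2}^2 \leq \lambda_1(\Omega)^{-1}\|u\|_V^2$. Second, for each superlinear term, I would split
\[
\|u\|_{L^{p_i+1}}^{p_i+1} = \|u\|_{L^{p_i+1}}^{\,p_i-1}\cdot\|u\|_{L^{p_i+1}}^{2}
\]
and apply the embedding inequality \eqref{embedding} only to the second factor, so that the $L^{p_i+1}$-norm with exponent $p_i-1$ is left untouched (this is where the $p_i>1$ hypothesis is essential, since otherwise the split produces a negative power). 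Combining the two steps gives
\[
\|u\|_V^2 \leq \left(\frac{\lambda}{\lambda_1(\Omega)} + \sum_{i=1}^{n} a_i C_{p_i+1}^2 \|u\|_{L^{p_i+1}}^{p_i-1}\right)\|u\|_V^2.
\]

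To finish, I would rearrange this as
\[
\left(1 - \frac{\lambda}{\lambda_1(\Omega)} - \sum_{i=1}^{n} a_i C_{p_i+1}^2 \|u\|_{L^{p_i+1}}^{p_i-1}\right)\|u\|_V^2 \leq 0,
\]
and invoke the hypothesis \eqref{cond:lem1}, which says exactly that the parenthesized coefficient is strictly positive. Hence $\|u\|_V = 0$, i.e., $u\equiv 0$ in $V$.

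There is no real obstacle here; the proof is a one-line energy identity followed by Poincaré and a careful Sobolev-embedding split. The only thing to watch is the book-keeping of the two norms $L^2$ and $L^{p_i+1}$: one must absorb $\|u\|_{L^2}^2$ via $\lambda_1(\Omega)$ and only the quadratic piece of each $\|u\|_{L^{p_i+1}}^{p_i+1}$ via $C_{p_i+1}$, leaving the residual factor $\|u\|_{L^{p_i+1}}^{p_i-1}$ matching the exponent appearing in \eqref{cond:lem1}.
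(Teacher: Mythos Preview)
Your proposal is correct and follows essentially the same route as the paper: test the weak formulation with $v=u$, apply the pointwise bound \eqref{f:lem1}, control $\lambda\|u\|_{L^2}^2$ via $\lambda_1(\Omega)$, split each $\|u\|_{L^{p_i+1}}^{p_i+1}$ and absorb the quadratic factor through $C_{p_i+1}$, then conclude from \eqref{cond:lem1}. The paper's proof is exactly this computation, with no additional ideas.
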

\begin{rem}
	The left-hand side of \eqref{cond:lem1} converges to zero as $\left\|u\right\|_{L^{p_{i}+1}} \downarrow 0$.
	Therefore, if the solution $ u $ of \eqref{main:fpro} is sufficiently small to satisfy \eqref{cond:lem1}, then $ u $ must vanish.
\end{rem}
\begin{rem}
	The inequality \eqref{f:lem1} can be reduced to a combination of the following inequalities:
	\begin{align*}
	&f(t)\leq \lambda t + \displaystyle \sum_{i=1}^{n}a_{i}t^{p_{i}} \text{~~~for~all~}t \geq 0,\\
	&-f(-t)\leq \lambda t + \displaystyle \sum_{i=1}^{n}a_{i}t^{p_{i}} \text{~~~for~all~}t \geq 0.
	\end{align*}
	Therefore, 	the polynomial
	$ f(t) = \lambda t + \sum_{i=2}^{n(<p^*)}a_{i}t|t|^{i-1} $ 
	with $ \lambda < \lambda_1(\Omega) $ and $a_i \in \mb{R} $ obviously satisfies the required inequality \eqref{f:lem1}.
	Indeed, for the set of subscripts $ \Lambda_{+} $ for which $ a_i \geq 0~(i \in \Lambda_{+})$ and $ a_i < 0$ $($otherwise$)$,
	we have $f(t)\leq \lambda t + \sum_{i \in \Lambda_{+}}a_{i}t^{i} $ and $-f(-t)\leq \lambda t + \sum_{i \in \Lambda_{+}}a_{i}t^{i} \text{~for~all~}t\geq 0 $.
\end{rem}
\subsubsection*{Proof of Lemma \ref{lem:positive}}
We prove that $ \|u\|_V = 0 $.
Because $u$ satisfies
\begin{align*}
\left(\nabla u,\nabla v\right)_{L^2}=\left<F(u),v\right>{\rm~~~for~all~}v\in V,
\end{align*}
by fixing $ v=u $,
we have
\begin{align}
\label{ineq:prooflemma}
\left\|u \right\|_{V}^{2} 
\leq &\displaystyle \int_{\Omega}\left\{\lambda\left(u (x)\right)^{2}+\sum_{i=1}^{n}a_{i} |u (x)|^{p_{i}+1}\right\}dx  \nonumber \\
= &\displaystyle \lambda\left\|u \right\|_{L^{2}}^{2}+\sum_{i=1}^{n}a_{i}\left\|u \right\|_{L^{p_{i}+1}}^{p_{i}+1}\nonumber \\
\leq &\left\{ \displaystyle \frac{\lambda}{\lambda_{1}(\Omega)} +\sum_{i=1}^{n}a_{i}C_{p_{i}+1}^{2} \left\|u \right\|_{L^{p_{i}+1}}^{p_{i}-1} \right\} \left\|u \right\|_{V}^{2}.
\end{align}
Therefore, \eqref{cond:lem1} ensures that $ \|u \|_V = 0 $.
\qed\\

For two sets $A, B \subset \mathbb{R}^N $, we denote by $\#\text{\rm C.C.}(B;A)$ the number of connected components $B^i$ $(i=1,2,\cdots)$ of $ B $ such that $A\cap B^i \neq \emptyset$.
We simply write $\#\text{\rm C.C.}(A)$ = $\#\text{\rm C.C.}(A;A)$, the number of all the connected components of $A$.
Before describing the main theorem (Theorem \ref{theo:main}), we prepare the following lemma.
\begin{lem}\label{lem:cc}
Let $A, B \subset \mathbb{R}^N $ be composed of a finite number of connected components $A^i \subset A$ and $B^i \subset B$ $(i=1,2,\cdots)$.
If $A \subset B$,
each connected component $B^i$ of $B$ such that $B^i \cap A \neq \emptyset$ contains a connected component of $A$,
and thus
\begin{align}
	\#\text{\rm C.C.}(B;A) \leq \#\text{\rm C.C.}(A).
\end{align}
\end{lem}

\proof
When $B^i \cap A \neq \emptyset$, there exists a connected component $A^j$ of $A$ such that $B^i \cap A^j \neq \emptyset$.
Let $x \in B^i \cap A^j$.
One confirms that $B^i$ is the maximal connected subset of $B$ that contains $x$, and $A^j$ is a subset in $B$ that contains $x$.
Thus, $A^j \subset B^i$.
\qed\\

On the basis of Lemmas \ref{lem:positive} and \ref{lem:cc}, the following theorem evaluates the number of nodal domains of $ u $ from the inclusions \eqref{eq:h10ball} and \eqref{eq:linfball} for $ \hat{u} $.
\begin{theo}\label{theo:main}
	Let $ f $ satisfy \eqref{f:lem1} for some $ \lambda < \lambda_1(\mathring\Omega_0) $.
	We denote $ C_{p_{i}+1} = C_{p_{i}+1}(\Omega) $.
	If	
	\begin{align}
	\label{cond:theo1}
	\displaystyle \sum_{i=1}^{n}a_i C_{p_{i}+1}(\mathring\Omega_0)^{2}\left( \left\|\hat{u}\right\|_{L^{p_{i}+1}(\mathring\Omega_{0})}+C_{p_{i}+1}\rho\right)^{p_{i}-1}<1 - \f{\lambda}{\lambda_1(\mathring\Omega_0)},
	\end{align}		
	then a solution $u \in V \cap L^{\infty}(\Omega)$ of the D-problem \eqref{main:fpro} existing in the intersection of the balls \eqref{eq:h10ball} and \eqref{eq:linfball} satisfies
	\begin{align}
	&\#\text{\rm C.C.}(\Omega_{+}\cup\Omega_0; \Omega_{+}) \leq \#\text{\rm P.N.D.}(u) \leq \#\text{\rm C.C.}(\Omega_{+}), \label{ineq:cc1}\\
	&\#\text{\rm C.C.}(\Omega_{-}\cup\Omega_0; \Omega_{-}) \leq \#\text{\rm N.N.D.}(u) \leq \#\text{\rm C.C.}(\Omega_{-}).\label{ineq:cc2}
	\end{align}
	Note that if $ \mathring\Omega_0 $ is disconnected, \eqref{cond:theo1} is understood as the set of inequalities for all connected components $\mathring\Omega_0^j$ $ ( $$j=1,2,\cdots$$ ) $ of $ \mathring\Omega_0 $.
\end{theo}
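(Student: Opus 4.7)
The plan is to use the $L^{\infty}$-bound $\|u-\hat u\|_{L^{\infty}}\le\sigma$ to deduce that $u>0$ on $\Omega_+$ and $u<0$ on $\Omega_-$, and then to invoke Lemma \ref{lem:positive} on each connected component $\Omega_0^j$ of the ambiguous set in order to forbid any nodal domain of $u$ from being contained entirely in a single $\Omega_0^j$. Once this ``no hidden nodal domain'' fact is in hand, the chains \eqref{ineq:cc1}--\eqref{ineq:cc2} follow from a counting argument relating nodal domains of $u$ to connected components of $\Omega_\pm$ and of $\Omega_+\cup\Omega_0=\Omega\setminus\Omega_-$.

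The central step is a proof by contradiction. Assume a nodal domain $N$ of $u$ (say a positive one) satisfies $N\subset\Omega_0^j$. Subcritical elliptic regularity ensures that $u$ is continuous, so $u=0$ on $\partial N\cap\Omega$; combined with the zero Dirichlet data on any part of $\partial N$ meeting $\partial\Omega$, this shows that $u|_N\in H^1_0(N)$ is a nontrivial weak solution of \eqref{main:fpro} on $N$. Domain monotonicity gives $\lambda<\lambda_1(\Omega_0^j)\le\lambda_1(N)$ and $C_{p_i+1}(N)\le C_{p_i+1}(\Omega_0^j)$, while the triangle inequality together with the global embedding $V\hookrightarrow L^{p_i+1}(\Omega)$ and the bound $\|u-\hat u\|_V\le\rho$ yields
\[
\|u\|_{L^{p_i+1}(N)}\;\le\;\|u\|_{L^{p_i+1}(\Omega_0^j)}\;\le\;\|\hat u\|_{L^{p_i+1}(\Omega_0^j)}+C_{p_i+1}\rho.
\]
Substituting these estimates into \eqref{cond:lem1} on the domain $N$ reduces it to the assumed \eqref{cond:theo1} on $\Omega_0^j$, so Lemma \ref{lem:positive} forces $u\equiv 0$ on $N$, contradicting that $N$ is a nodal domain.

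Given the ``no hidden nodal domain'' fact, the bounds on $\#\mathrm{P.N.D.}(u)$ follow from simple topology. For the upper inequality in \eqref{ineq:cc1}, each P.N.D.\ must intersect $\Omega_+$ (otherwise, being connected and disjoint from $\Omega_-$, it would sit inside some $\Omega_0^j$), and each connected component of $\Omega_+$ is contained in a unique P.N.D.\ since $u>0$ on $\Omega_+$; the resulting surjection ``component of $\Omega_+\mapsto$ enclosing P.N.D.'' gives $\#\mathrm{P.N.D.}(u)\le\#\mathrm{C.C.}(\Omega_+)$. For the lower inequality, each P.N.D.\ lies in a unique connected component of $\Omega_+\cup\Omega_0$, and conversely each such component must contain at least one P.N.D.: those meeting $\Omega_+$ do so trivially, while a component sitting entirely in $\Omega_0$ is handled via unique continuation together with the standing assumption $\Omega_0\ne\Omega$ and the central step. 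The estimates \eqref{ineq:cc2} follow by the same argument with $\Omega_+$ and $\Omega_-$ interchanged.

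The step I expect to be the main obstacle is the careful verification of the hypothesis of Lemma \ref{lem:positive} on the a priori unknown subdomain $N\subset\Omega_0^j$: one must rely on domain monotonicity of both $\lambda_1$ and the Sobolev embedding constant, and control $\|u\|_{L^{p_i+1}(N)}$ solely in terms of $\hat u$ and $\rho$. Secondary technical points are the regularity needed to place $u|_N$ in $H^1_0(N)$ with zero trace on $\partial N$, which is ensured by continuity of subcritical weak solutions, and the implicit use of unique continuation in the lower inequality to rule out components of $\Omega_+\cup\Omega_0$ on which $u$ could vanish identically.
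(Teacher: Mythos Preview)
Your proposal is correct and follows essentially the same route as the paper: both show that no nodal domain of $u$ can lie entirely in $\Omega_0$ by applying Lemma~\ref{lem:positive} on the hypothetical subdomain $\Omega'\subset\Omega_0$, using domain monotonicity of $\lambda_1$ and $C_{p_i+1}$ together with the estimate $\|u\|_{L^{p_i+1}(\Omega')}\le\|\hat u\|_{L^{p_i+1}(\Omega_0)}+C_{p_i+1}\rho$, and then read off \eqref{ineq:cc1}--\eqref{ineq:cc2} by counting. You are in fact more explicit than the paper about the regularity needed for $u|_N\in H^1_0(N)$ and about the topological counting argument (including the unique-continuation caveat), which the paper dispatches in a single sentence.
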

\begin{rem}
	The formula inside the parentheses in \eqref{cond:theo1} converges to $0$ as $ \rho \downarrow 0 $ and $ |\Omega_{0}| \downarrow 0 $, which is equivalent to $ \sigma\downarrow0 $ when $ \hat{u} $ is continuous.
	Therefore, if verification succeeds for a continuous approximation $\hat{u}$ with sufficient accuracy,
	the number of nodal domains of $ u $ can be evaluated using Theorem $\ref{theo:main}$.
\end{rem}
\begin{rem}
	The connected components on either side of the inequalities \eqref{ineq:cc1} and \eqref{ineq:cc2} can be determined only from the information on the approximation $ \hat{u} $ and the $ L^{\infty} $-error $ \sigma $ as in \eqref{eq:linfball}; see the definitions of $ \Omega_{+} $, $ \Omega_{-} $, and $ \Omega_{0} $ $  $located just before Lemma $\ref{lem:positive}  $.
\end{rem}
\begin{rem}
	Explicitly estimating a lower bound for $ \lambda_1(\mathring\Omega_{0}) $ and upper bounds for $ C_{p+1} (=C_{p+1}(\Omega))$ and $ C_{p+1}(\mathring\Omega_0)$ is essential for Theorem {\rm \ref{theo:main}}.
	This topic is discussed in Appendix \ref{sec:constants}.
\end{rem}
\subsubsection*{Proof of Theorem \ref{theo:main}}
We first prove that there is no nodal domain of $ u $ in $ \mathring\Omega_{0} $.
To achieve this, we confirm that if $ u|_{\Omega'} $ (the restriction of $ u $ over $\Omega'$) can be regarded as a solution of the D-problem \eqref{main:fpro} for some subdomain $ \Omega' \subset \mathring\Omega_0 $ with the notational replacement $ \Omega \rightarrow \Omega' $, then $ u|_{\Omega'} $ should be a trivial solution that satisfies $ u|_{\Omega'}\equiv 0 $.

Suppose that there exists such a subdomain $ \Omega' $ so that $ u|_{\Omega'} \in H^1_0(\Omega')~(\subset V) $ is a solution of the D-problem \eqref{main:fpro} with the replacement $ \Omega \rightarrow \Omega' $.
We express $u\in V$ as $ u=\hat{u}+ \rho\omega$, where $\omega\in V$ satisfies $\left\|\omega\right\|_{V}\leq 1$.
This ensures that, for $p\in (1,p^*)$,
\begin{align}
\left\|u\right\|_{L^{p+1}(\Omega')}
\leq
\left\|\hat{u}\right\|_{L^{p+1}(\Omega')}+ C_{p+1}\rho	
\end{align}	
because $\left\|\omega \right\|_{L^{p+1}(\Omega')}\leq \left\|\omega \right\|_{L^{p+1}(\Omega)} \leq C_{p+1}\left\|\omega \right\|_{V}\leq C_{p+1}$.
Therefore,  it readily follows from $ \left\|\hat{u} \right\|_{L^{p+1}(\Omega')} \leq \left\|\hat{u} \right\|_{L^{p+1}(\mathring\Omega_0)} $ that
\begin{align}
\left\|u \right\|_{L^{p+1}(\Omega')}
\leq
\left\|\hat{u} \right\|_{L^{p+1}(\mathring\Omega_0)}+ C_{p+1}\rho.
\label{u:eval}
\end{align}	
Therefore, \eqref{cond:theo1} and \eqref{u:eval} ensure that
\begin{align*}
	\sum_{i=1}^{n}a_i C_{p_{i}+1}(\mathring\Omega_0)^{2} \left\| u|_{\Omega'}\right\|_{L^{p_{i}+1}(\Omega')}^{p_{i}-1}
	< 1 - \f{\lambda}{\lambda_1(\mathring\Omega_0)}
	\leq 1 - \f{\lambda}{\lambda_1(\Omega')},
\end{align*}
where $\left\|u \right\|_{L^{p_i+1}(\Omega')} = \left\|u|_{\Omega'} \right\|_{L^{p_i+1}(\Omega')}$ and $\lambda_1(\Omega')\geq \lambda_1(\mathring\Omega_0)$.
Because $C_{p+1}(\mathring\Omega_0) $ can be used as a bound $C_{p+1}(\Omega')$,
it follows from Lemma \ref{lem:positive} that $ u|_{\Omega'}\equiv 0 $.
Thus, there is no nodal domain in $ \mathring\Omega_0 $.

In the following, we evaluate the number of nodal domains of $u$.
Let us write $\Omega^{*}_{+}=\{x\in \Omega:u(x)>0\}$, so that $\text{\rm C.C.}(\Omega^{*}_{+})=\text{\rm P.N.D.}(u)$.
Because there is no positive nodal domain in the interior of $ \Omega_0 \cup \Omega_{-} $, we have
\begin{align}
	&\text{\rm C.C.}(\Omega^{*}_{+};\Omega_{+}) = \text{\rm C.C.}(\Omega^{*}_{+}) = \text{\rm P.N.D.}(u).
\end{align}
Because $\Omega_{+}\subset \Omega^{*}_{+}$, it follows from Lemma \ref{lem:cc} that
\begin{align}
	\#\text{\rm P.N.D.}(u) = \#\text{\rm C.C.}(\Omega^{*}_{+};\Omega_{+}) \leq \#\text{\rm C.C.}(\Omega_{+}). \label{ineq:ccright}
\end{align}
Thus, the right inequality in \eqref{ineq:cc1} is proved.
Besides, Lemma \ref{lem:cc} indicates from the inclusion $\Omega^{*}_{+} \subset \Omega_{+}\cup\Omega_0$ that 
\begin{align}
	\#\text{\rm C.C.}(\Omega_{+}\cup\Omega_0;\Omega^{*}_{+}) \leq \#\text{\rm C.C.}(\Omega^{*}_{+})~(= \#\text{\rm P.N.D.}(u)). \label{ineq:ccleft}
\end{align}
Again, from the inclusion $\Omega_{+}\subset \Omega^{*}_{+}$, we see that
$\#\text{\rm C.C.}(\Omega_{+}\cup\Omega_0; \Omega_{+}) \leq \#\text{\rm C.C.}(\Omega_{+}\cup\Omega_0;\Omega^{*}_{+})$.
This ensures the left inequality in \eqref{ineq:cc1}.

Inequality \eqref{ineq:cc2} is ensured in the same way with the notational replacements $\Omega_{+} \rightarrow \Omega_{-}$ and $\Omega^{*}_{+} \rightarrow \Omega^{*}_{-} := \{x\in \Omega:u(x)<0\}$.
\qed

\subsection{Further discussion on the main theorem}
In this subsection, we provide some remarks about Theorem \ref{theo:main}.
\subsubsection{Inequality \eqref{f:lem1} can be weakened}
Assuming the $ L^{\infty} $-error estimation \eqref{linferror} (or \eqref{eq:linfball}), we ensure that the range of $ u $ is taken over $ [\min\{\hat{u}\}-\sigma,\max\{\hat{u}\}+\sigma] $.
Therefore, the condition \eqref{f:lem1} imposed on $ f $ is replaceable with
\begin{align}
	\label{f:lem1_rep}
	tf(t)\leq \lambda t^2 + \displaystyle \sum_{i=1}^{n}a_{i}|t|^{p_{i}+1} \text{~~~for~all~} t \in [\min\{\hat{u}\}-\sigma,\max\{\hat{u}\}+\sigma]
\end{align}
because \eqref{ineq:prooflemma} is confirmed in the same manner when the $ L^{\infty} $-error $ \sigma $ is explicitly estimated. 
\subsubsection{When assuming only an $ L^{\infty} $-error}
Given $ \sigma $ satisfying \eqref{linferror}, $ u $ can be written as $ u=\hat{u} + \sigma \omega $ with $ \omega \in L^{\infty}(\Omega) $ satisfying $ \|\omega\|_{L^{\infty}} \leq 1$.
Therefore, applying the inequality
\begin{align}
	\left\|u\right\|_{L^{p+1}(\Omega')}
	\leq
	\left\|\hat{u}\right\|_{L^{p+1}(\mathring\Omega_0)}+ \sigma |\Omega_0|^{\frac{1}{p+1}}
\end{align}
instead of \eqref{u:eval}, we have the following similar theorem without assuming an $ H^1_0 $-error $ \rho $ but only an $ L^{\infty} $-error $ \sigma $.
\begin{theo}\label{theo:main2}
	Let $ f $ satisfy \eqref{f:lem1} for some $ \lambda < \lambda_1(\mathring\Omega_0) $.
	If	
	\begin{align}
		\displaystyle \sum_{i=1}^{n}a_i C_{p_{i}+1}(\mathring\Omega_0)^{2}\left(\left\|\hat{u}\right\|_{L^{p_{i}+1}(\mathring\Omega_0)}+ \sigma |\Omega_0|^{\frac{1}{p_{i}+1}}\right)^{p_{i}-1}<1 - \f{\lambda}{\lambda_1(\mathring\Omega_0)},
	\end{align}		
	then a solution $u \in V \cap L^{\infty}(\Omega)$ of the D-problem \eqref{main:fpro} existing in the ball \eqref{eq:linfball} satisfies \eqref{ineq:cc1} and \eqref{ineq:cc2}.
\end{theo}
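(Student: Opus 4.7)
The plan is to mirror the proof of Theorem~\ref{theo:main}, changing only the step that bounds $\|u\|_{L^{p_i+1}(\Omega')}$ on a subdomain $\Omega' \subset \Omega_0$. As in that proof, I would suppose that $\Omega'$ is a connected component of a putative nodal domain of $u$ lying inside $\Omega_0$, so that $u|_{\Omega'} \in H^1_0(\Omega')$ and can be regarded as a solution of the D-problem posed on $\Omega'$ with the replacement $\Omega \to \Omega'$. The goal is then to invoke Lemma~\ref{lem:positive} on $\Omega'$ to force $u|_{\Omega'}\equiv 0$, contradicting the assumption that $\Omega'$ is a nodal domain, and thereby establishing the counting inequalities \eqref{ineq:cc1} and \eqref{ineq:cc2}.

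The essential modification is the estimation of $\|u\|_{L^{p_i+1}(\Omega')}$ without access to an $H^1_0$-error bound. I would use only the $L^{\infty}$-bound \eqref{eq:linfball} to write $u = \hat{u} + \sigma \omega$ with $\omega \in L^{\infty}(\Omega)$, $\|\omega\|_{L^{\infty}} \leq 1$. A direct H\"older-type computation then yields
\[
\|\sigma \omega\|_{L^{p+1}(\Omega')} \leq \sigma |\Omega'|^{\frac{1}{p+1}} \leq \sigma |\Omega_0|^{\frac{1}{p+1}},
\]
so the triangle inequality delivers the advertised replacement for \eqref{u:eval}, namely
\[
\|u\|_{L^{p+1}(\Omega')} \leq \|\hat{u}\|_{L^{p+1}(\Omega_0)} + \sigma |\Omega_0|^{\frac{1}{p+1}}.
\]

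Feeding this bound into the same monotonicity chain used for Theorem~\ref{theo:main}, namely $\lambda_1(\Omega') \geq \lambda_1(\Omega_0)$ and $C_{p_i+1}(\Omega') \leq C_{p_i+1}(\Omega_0)$, the hypothesis of the theorem implies
\[
\sum_{i=1}^{n} a_i\, C_{p_i+1}(\Omega')^{2} \,\|u|_{\Omega'}\|_{L^{p_i+1}(\Omega')}^{p_i-1} < 1 - \frac{\lambda}{\lambda_1(\Omega')},
\]
and Lemma~\ref{lem:positive}, applied on $\Omega'$, then forces $u|_{\Omega'} \equiv 0$. The nodal-domain counts \eqref{ineq:cc1} and \eqref{ineq:cc2} follow verbatim from the argument at the end of Theorem~\ref{theo:main}.

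Since the entire architecture of Theorem~\ref{theo:main} is reused and only a Sobolev-embedding estimate is swapped for a H\"older-type estimate, I do not expect a genuine obstacle. The one point worth a brief verification is that $\omega := \sigma^{-1}(u-\hat{u}) \in L^{\infty}(\Omega)$ transfers cleanly to $L^{p+1}(\Omega')$ for all admissible $p$ via $|\Omega_0| < \infty$, which is immediate. Practically, the only difference between the two theorems is that the correction term $\sigma |\Omega_0|^{1/(p_i+1)}$ takes the role of $C_{p_i+1}\rho$, making Theorem~\ref{theo:main2} useful whenever only an $L^{\infty}$-error is available, or whenever that bound is tighter than the Sobolev alternative.
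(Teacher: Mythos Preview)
Your proposal is correct and follows essentially the same approach as the paper: the paper simply states that one replaces the estimate \eqref{u:eval} by $\|u\|_{L^{p+1}(\Omega')} \leq \|\hat{u}\|_{L^{p+1}(\Omega_0)} + \sigma |\Omega_0|^{1/(p+1)}$ and otherwise reuses the proof of Theorem~\ref{theo:main} verbatim. Your write-up makes this substitution explicit and correctly justifies it via $\|\omega\|_{L^{p+1}(\Omega')}\leq |\Omega'|^{1/(p+1)}\leq |\Omega_0|^{1/(p+1)}$.
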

Note that almost all existing verification methods for the partial differential equation \eqref{eq:main} estimate an $ L^{\infty} $-error $ \sigma $ after deriving an $ H^1_0 $-error $ \rho $, as described in Subsection \ref{subsec:ex} (see, e.g., \cite{nakaoplumwatanabe2019numerical}).
However, if $ \sigma $ is obtained directly without computing $ \rho $, Theorem \ref{theo:main2} becomes useful.
\subsubsection{Sufficient conditions for \eqref{cond:theo1}}
Because $C_{p_{i}+1}(\Omega)$ can be regarded as a bound $ C_{p_{i}+1}(\mathring\Omega_0) $,
the following simplified inequality is sufficient for \eqref{cond:theo1}.
\begin{align*}
\displaystyle \sum_{i=1}^{n}a_i C_{p_{i}+1}^{2}\left( \left\|\hat{u}\right\|_{L^{p_{i}+1}(\mathring\Omega_{0})}+C_{p_{i}+1}\rho\right)^{p_{i}-1}<1 - \f{\lambda}{\lambda_1(\mathring\Omega_0)}.
\end{align*}
If we have $ \lambda < \lambda_1(\Omega) $, this is further reduced to
\begin{align*}
\displaystyle \sum_{i=1}^{n}a_i C_{p_{i}+1}^{2}\left( \left\|\hat{u}\right\|_{L^{p_{i}+1}(\mathring\Omega_{0})}+C_{p_{i}+1}\rho\right)^{p_{i}-1}<1 - \f{\lambda}{\lambda_1(\Omega)}
\end{align*}
because $\lambda_1(\mathring\Omega_0)\geq \lambda_1(\Omega)$.
Generally, the shape of $ \Omega_{0} $ tends to be more complicated than $ \Omega $, which makes the evaluation of $ C_{p_{i}+1}(\mathring\Omega_0) $ and/or $ \lambda_1(\mathring\Omega_0) $ difficult.
The above sufficient inequalities can be useful in such cases.
\subsubsection{Application to specific nonlinearities}
We apply Theorem \ref{theo:main} to two specific problems in which we are interested.
The first problem is \eqref{eq:maind} with the nonlinearity $ f(t) = \lambda t + t|t|^{p-1} $, $p\in (1,p^*)$.
Adapting Theorem \ref{theo:main} to this case, we have the following.
\begin{cor}
	\label{cor:emden}
	Let $ f(t)= \lambda t + t|t|^{p-1} $, with $ p \in (1,p^*) $.
	If
	\begin{align}
	C_{p+1}(\mathring\Omega_0)^{2}\left(\left\|\hat{u}\right\|_{L^{p+1}(\mathring\Omega_0)}+C_{p+1}\rho\right)^{p-1}<1 - \f{\lambda}{\lambda_1(\mathring\Omega_0)},
	\end{align}
	then a solution $u \in V$ of the D-problem \eqref{main:fpro} in the intersection of the balls \eqref{eq:h10ball} and \eqref{eq:linfball} satisfies \eqref{ineq:cc1} and \eqref{ineq:cc2}.
\end{cor}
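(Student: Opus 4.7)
The plan is to derive Corollary~\ref{cor:emden} as a direct specialization of Theorem~\ref{theo:main} to the nonlinearity $f(t)=\lambda t + t|t|^{p-1}$ with $n=1$, $a_1=1$, and $p_1=p$. So the entire task reduces to verifying that $f$ satisfies the pointwise bound \eqref{f:lem1} in Lemma~\ref{lem:positive}, after which the hypothesis of the corollary is literally the hypothesis \eqref{cond:theo1} of Theorem~\ref{theo:main} in this single-term case.

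The first step is the algebraic identity
\begin{equation*}
tf(t) \;=\; \lambda t^2 + t\cdot t|t|^{p-1} \;=\; \lambda t^2 + t^2|t|^{p-1} \;=\; \lambda t^2 + |t|^{p+1},
\end{equation*}
valid for every $t\in\mathbb{R}$. This shows \eqref{f:lem1} holds with equality for the choices above. The second step is to note that the condition $\lambda < \lambda_1(\Omega_0)$ is implicit in the corollary's hypothesis, because the right-hand side $1-\lambda/\lambda_1(\Omega_0)$ must be positive for the stated inequality to be satisfiable (the left-hand side is nonnegative); equivalently, if $\Omega_0=\emptyset$ we interpret $\lambda_1(\Omega_0)=\infty$ as in the statement of Theorem~\ref{theo:main}. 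The third step is simply to invoke Theorem~\ref{theo:main}: with $n=1$, $a_1=1$, $p_1=p$, the assumption \eqref{cond:theo1} becomes
\begin{equation*}
C_{p+1}(\Omega_0)^2\bigl(\|\hat u\|_{L^{p+1}(\Omega_0)} + C_{p+1}\rho\bigr)^{p-1} < 1-\frac{\lambda}{\lambda_1(\Omega_0)},
\end{equation*}
which is exactly the hypothesis of the corollary. Theorem~\ref{theo:main} then delivers the bounds \eqref{ineq:cc1} and \eqref{ineq:cc2}.

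There is no substantive obstacle here; the corollary is essentially a bookkeeping exercise that isolates one concrete and frequently arising nonlinearity. The only point warranting care is confirming that the polynomial $t|t|^{p-1}$ is handled by the ``combination'' form of the remark following Lemma~\ref{lem:positive}: writing $f(t)\le \lambda t + t^p$ for $t\ge 0$ and $-f(-t)\le \lambda t + t^p$ for $t\ge 0$ (both with equality) shows there is no sign issue with the coefficient, so no case analysis on the sign of $t$ is needed. The proof therefore occupies at most a few lines.
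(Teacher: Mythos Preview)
Your proposal is correct and matches the paper's own treatment: the corollary is stated immediately after the sentence ``Adapting Theorem~\ref{theo:main} to this case, we have the following,'' with no separate proof given, precisely because it is the single-term specialization $n=1$, $a_1=1$, $p_1=p$ of Theorem~\ref{theo:main} via the identity $tf(t)=\lambda t^2+|t|^{p+1}$. Your observation that $\lambda<\lambda_1(\Omega_0)$ is forced by the hypothesis (since $\rho>0$ makes the left-hand side strictly positive) is a nice clarification that the paper leaves implicit.
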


The second problem is the case in which $f(t)=\varepsilon^{-2}(t-t^3)$ $(\varepsilon>0)$.
We only consider the case where $ \varepsilon^{-2} \geq \lambda_1(\Omega) $,
because there is no solution of the D-problem \eqref{main:fpro} other than the trivial solution $ u\equiv 0 $ when $\varepsilon^{-2} < \lambda_1(\Omega)$.
Indeed, no positive solution is admitted when $\varepsilon^{-2} < \lambda_1(\Omega)$.
This can be confirmed by multiplying $-\Delta u = \varepsilon^{-2}(u-u^3)$ with the first eigenfunction of $-\Delta$ and integrating both sides.
For a sign-changing solution $u$, let $\Omega'$ be a positive nodal domain of $u$.
Note that $-u$ is also a solution of \eqref{main:fpro}, and therefore, considering only positive nodal domains is sufficient.
The restricted function $u_{\Omega'}$ is a solution of a zero-Dirichlet problem restricted on $\Omega'$ and $ \lambda_1(\Omega) \leq \lambda_1(\Omega')$.
Thus, if $\varepsilon^{-2} < \lambda_1(\Omega) (\leq \lambda_1(\Omega'))$, $u$ is the trivial solution.

Because
\begin{align}
	tf(t)\leq \varepsilon^{-2} t^2 \text{~~~for~all~}t \in \mathbb{R},
\end{align}
applying Theorem \ref{theo:main} to the nonlinearity gives us the following.
\begin{cor}\label{coro:main}
	Let $f(t)=\varepsilon^{-2}(t-t^3)$, with $ \varepsilon^{-2} \geq \lambda_1(\Omega) $.
	If	
	\begin{align}
	\label{cond:coro1}
	\varepsilon^{-2} < \lambda_1(\mathring\Omega_0),
	\end{align}
	then a solution $u \in V$ of the D-problem \eqref{main:fpro} in the intersection of the balls \eqref{eq:h10ball} and \eqref{eq:linfball} satisfies \eqref{ineq:cc1} and \eqref{ineq:cc2}.
\end{cor}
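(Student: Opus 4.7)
The plan is to derive Corollary \ref{coro:main} as a direct specialization of Theorem \ref{theo:main} applied to the Allen--Cahn nonlinearity. The key observation, already flagged in the preceding paragraph of the excerpt, is the pointwise bound $tf(t) = \varepsilon^{-2}(t^{2} - t^{4}) \leq \varepsilon^{-2} t^{2}$ for all $t \in \mathbb{R}$, which holds because $t^{4} \geq 0$. Thus $f$ satisfies the hypothesis \eqref{f:lem1} with $\lambda = \varepsilon^{-2}$ and the polynomial remainder taken to be trivial (formally, choose any subcritical $p_{1} \in (1, p^{*})$ with coefficient $a_{1} = 0$ so that the sum in \eqref{f:lem1} is identically zero, sidestepping any concern about the case $n = 0$).

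With this identification, the spectral hypothesis $\lambda < \lambda_{1}(\Omega_{0})$ required by Theorem \ref{theo:main} becomes $\varepsilon^{-2} < \lambda_{1}(\Omega_{0})$, which is precisely \eqref{cond:coro1}. Moreover, with all $a_{i} = 0$, the smallness condition \eqref{cond:theo1} reduces to $0 < 1 - \lambda/\lambda_{1}(\Omega_{0})$, i.e.\ again to $\varepsilon^{-2} < \lambda_{1}(\Omega_{0})$. Consequently both hypotheses of Theorem \ref{theo:main} are met, and the theorem's conclusions \eqref{ineq:cc1} and \eqref{ineq:cc2} transfer verbatim.

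I do not expect any genuine obstacle here, since the corollary is essentially a bookkeeping specialization; the only minor point is making explicit that the cubic term $-\varepsilon^{-2} t^{3}$ contributes nonpositively to $tf(t)$ and therefore may be discarded when verifying \eqref{f:lem1}. The auxiliary assumption $\varepsilon^{-2} \geq \lambda_{1}(\Omega)$ stated in the corollary does not enter the proof itself; it only rules out the uninteresting case where Lemma \ref{lem:positive} applied on the whole domain $\Omega$ would already force $u \equiv 0$.
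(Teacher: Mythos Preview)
Your proposal is correct and matches the paper's own derivation: the paper simply notes $tf(t)\leq \varepsilon^{-2}t^{2}$ and invokes Theorem~\ref{theo:main} with $\lambda=\varepsilon^{-2}$ and vanishing coefficients $a_i$, so that \eqref{cond:theo1} collapses to \eqref{cond:coro1}. Your observation that the hypothesis $\varepsilon^{-2}\geq\lambda_1(\Omega)$ plays no role in the proof (it merely excludes the trivial regime where Lemma~\ref{lem:positive} forces $u\equiv 0$ on all of $\Omega$) is also accurate.
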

In the next subsection, Corollary \ref{coro:main} is applied to an important problem.
\subsection{Numerical example}\label{subsec:ex}
In this subsection,
we consider the stationary problem of the Allen--Cahn equation:
\begin{align}
\label{eq:allen}
\left\{\begin{array}{l l}
-\Delta u(x)=\varepsilon^{-2}(u(x)-u(x)^3), &x \in \Omega,\\
u(x)=0,  &x \in \partial\Omega\\
\end{array}\right.
\end{align}
for which Corollary \ref{coro:main} can be used.
The Allen–Cahn equation was originally proposed as a simplified model for the phase separation process \cite{allen1979microscopic}.
Because the nodal line of solutions of this equation represents the interface between two coexisting phases, determining its location is important for the problem.

We demonstrated the applicability of our theory to the problem on square $ \Omega =(0,1)^2 $.
All computations were implemented on a computer with 2.20 GHz Intel Xeon E7-4830 CPUs $\times$ 4, 2 TB RAM, and CentOS 7 using MATLAB 2019b with GCC version 6.3.0. 
All rounding errors were strictly estimated using the toolboxes INTLAB version 11 \cite{rump1999book} and kv Library version 0.4.48 \cite{kashiwagikv}.
Therefore, the accuracy of all results was guaranteed mathematically.
We constructed approximate solutions of \eqref{main:fpro} for the domain via a Legendre polynomial basis.
Specifically, we define a finite-dimensional subspace $ V_M~(\subset V) $ as the tensor product $V_M = \text{span\,}\{\phi_{1},\phi_{2},\cdots,\phi_{M}\} \otimes \text{span\,}\{\phi_{1},\phi_{2},\cdots,\phi_{M}\} $, where each $\phi_{n}$ ($ n=1,2,3,\cdots $) is defined as
\begin{align}
	&\displaystyle \phi_{n}(x)=\frac{1}{n(n+1)}x(1-x)\frac{dQ_{n}}{dx}(x)\nonumber \\
	&\text{~~with~~}
	Q_{n}=\displaystyle \frac{(-1)^{n}}{n!}\left(\frac{d}{dx}\right)^{n}x^{n}(1-x)^{n},~~n=1,2,3,\cdots. 
\end{align}
For a fixed integer $ M_u \geq 1 $, we construct $\hat{u}$ in $ V_{M_u} $ as
\begin{align}
\label{eq:uhat}
\displaystyle \hat{u}(x,y)=\sum_{i=1}^{M_u}\sum_{j=1}^{M_u}u_{i,j}\phi_{i}(x)\phi_{j}(y),~~u_{i,j}\in \mathbb{R}.
\end{align}
Note that our method does not limit the basis functions that constitute approximate solutions,
but can be applied to many bases other than the Legendre polynomial basis, such as the finite element and Fourier bases.

In actual computations to obtain $ H^1_0 $-errors $ \rho $ using the methods proposed in \cite{plum2008,tanaka2014verified}, verification was implemented on the solution space $ V $ with the generalized inner product and norm
\begin{align}
 	\label{tau_norm}
	\left(u,v\right)_{\tau}=\left(\nabla u,\nabla v\right)_{L^{2}}+\tau\left(u,v\right)_{L^{2}},~~~\left\| u \right\|_{\tau}=\sqrt{\left(u,u\right)_{\tau}},
\end{align}
where $\tau$ is a nonnegative number chosen as
\begin{align}
	\label{select:tau}
	\tau>-f'(\hat{u}(x)) = \varepsilon^{-2} (-1 + 3 \hat{u}^2) \text{~~~for~all~} x\in\Omega.
\end{align}
However, because the norm $\left\|\cdot\right\|_{\tau}$ monotonically increases with $\tau$, the usual norm $\left\|\cdot\right\|_{V} (= \left\|\nabla\cdot\right\|_{L^{2}})$ is bounded by $\left\|\cdot\right\|_{\tau}$ for any $\tau \geq 0$.
Therefore, we can use the error bound $ \|u-\hat{u}\|_{\tau} $ as the error bound $ \rho $ in the sense of the usual norm that is desired in Subsection \ref{subsec:maintheo}, whereas we should allow some overestimation for $ \rho $ (see, Table \ref{table:allen} for estimation results).

We used \cite[Theorem 2.3]{kimura1999on}  to obtain an explicit interpolation error constant $ C(M) $ ($ M\geq 1 $) satisfying
\begin{align}
\label{eq:interpolation}
\left\|v-P_{M}v\right\|_{V}\leq C(M)\left\|\Delta v\right\|_{L^{2}} \text{~~~for~all~} v \in V \cap H^2(\Omega),
\end{align}
where the orthogonal projection $ P_M $ from $ V $ to $ V_M $ is defined as
\begin{align*}
(v- P_M v,v_M)_V = 0 \text{~~~for all~} v \in V \text{~and~} v_M \in V_M.
\end{align*}
The interpolation error constant $ C^{\tau}(M) $ ($ M\geq 1 $) corresponding to the generalized norm \eqref{tau_norm} is defined as
\begin{align}
	\label{eq:interpolation_tau}
	\left\|v-P^{\tau}_{M}v\right\|_{\tau}\leq C^{\tau}(M)\left\|-\Delta v + \tau v\right\|_{L^{2}} \text{~~~for~all~} v \in V \cap H^2(\Omega),
\end{align}
where $ P^{\tau}_M $ is the orthogonal projection from $ V $ to $ V_M $ corresponding to \eqref{tau_norm} that satisfies
\begin{align*}
	(v- P^{\tau}_M v,v_M)_{\tau} = 0 \text{~~~for all~} v \in V \text{~and~} v_M \in V_M.
\end{align*}
This generalized constant $ C^{\tau}(M) $ can be estimated from $ C(M) $ via
\begin{align}
	\label{eq:CMtau}
	C^{\tau}(M) \leq C(M) \sqrt{1+\tau C(M)^{2}};
\end{align}
see \cite[Remark A.4]{tanaka2017numerical}.

This constant $ C^{\tau}(M) $ was used to obtain $ K $, a key constant for error estimation introduced below.
The lower bounds for $ \lambda(\Omega_0) $ were estimated using Corollary \ref{coro:lower_eigen}.

We proved the existence of solutions $u$ of the D-problem \eqref{main:fpro} (that is, weak solutions of \eqref{eq:allen}) in $ \overline{B}(\hat{u},\rho,\|\cdot\|_V) $ and $ \overline{B}(\hat{u},\sigma,\|\cdot\|_{L^{\infty}}) $ given approximate solutions $ \hat{u} $ constructed as \eqref{eq:uhat}.
The proof was achieved by combining the methods described in \cite{plum2008} and \cite{tanaka2014verified}.
On the basis of \cite[Theorem 1]{plum2008} , we obtained $H_{0}^{1}$-error estimates $ \rho $.
The required constants $\delta$ and $K$ and function $g$ in the theorem were computed as follows:
\begin{itemize}
	\setlength{\parskip}{1pt}
	\setlength{\itemsep}{1pt}
	\item $\delta$ was evaluated as $\delta \leq C_{2}\|\Delta\hat{u}+\varepsilon^{-2}(\hat{u}-\hat{u}^3)\|_{L^{2}}$ with $ C_2 = (2\pi^2+\tau)^{-\f{1}{2}} $.
	This $ L^2$-norm was computed using a numerical integration method with strict estimation of rounding errors \cite{kashiwagikv}.
	\item $K$, the norm of the inverse operator, was computed using the method described in \cite{tanaka2014verified}, with $ C^{\tau}(M_K) $ defined above given $ M_K \geq 1 $.
	\item $ g $ was taken as $g(t)=6 \varepsilon^{-2} C_{4}^{3} t\left(\|\hat{u}\|_{L^{4}(\Omega)}+C_{4} t\right)$; see \cite[Subsection 4.4]{plum2008}  for the construction of $ g $.
	An upper bound for $C_4$ was evaluated using the smaller estimation from \cite[Corollary A.2]{tanaka2017sharp}  and \cite[Lemma 2]{plum2008}   (see Corollary \ref{coro:roughembedding} and Theorem \ref{theo:plum_embedding}).
	Although \cite[Corollary A.2]{tanaka2017sharp}  estimates $C_4$ in the sense of the usual norm $ \|\cdot\|_V $,
	it becomes an upper bound for the embedding constant with the generalized norm \eqref{tau_norm} because $ \|\cdot\|_V \leq  \|\cdot\|_{\tau} $ for any nonnegative $ \tau $.
\end{itemize}
The solution $u \in \overline{B}(\hat{u},\rho,\|\cdot\|_V) $ of \eqref{eq:allen} has $H^{2}$-regularity
because problem \eqref{eq:poisson} subject to the zero-Dirichlet boundary condition
has a unique solution $u\in H^{2}(\Omega)$ for each $h \in L^2(\Omega)$, such as when $\Omega$ is a bounded convex polygonal domain $($again, see {\rm \cite{grisvard2011elliptic}}$) $.
Therefore, to obtain an $L^{\infty}$-error $ \sigma $, we used the following bound for the embedding $ H^2(\Omega) \hookrightarrow L^{\infty}(\Omega) $ provided in \cite[Theorem 1, Corollary 1]{plum1992explicit}.

\begin{theo}[\cite{plum1992explicit}] \label{theo:linfembedding}
There exist constants $c_0$, $c_1$, $c_2$ dependent on $\Omega$ such that,
for all $u\in H^{2}\left(\Omega\right)$,
\begin{align*}
\|u\|_{L^{\infty}}\le c_{0}\|u\|_{L^2}+c_{1}\|\nabla u\|_{L^2}+c_{2}\|u_{xx}\|_{L^2},
\end{align*}
where $u_{xx}$ denotes the Hesse matrix of $u$.
\end{theo}
\begin{rem}
When $N=2$, the norm of the Hesse matrix of $u$ is precisely defined by
\begin{align*}
\|u_{xx}\|_{L^2}=\sqrt{\sum_{i,j=1}^{2}\left\|\frac{\partial^{2}u}{\partial x_{i}\partial x_{j}}\right\|^{2}_{L^2}}.
\end{align*}
Moreover, when $\Omega$ is polygonal, we have $\left\|u_{xx}\right\|_{L^2}=\left\|\Delta u\right\|_{L^2}$ for all $u\in H^{2}(\Omega)\cap V$~$($see, for example, {\rm \cite{grisvard2011elliptic}}$)$.
\end{rem}
Explicit values of $c_0$, $c_1$, $c_2$ were provided in \cite{plum1992explicit} for $N=2,3$.
The constants displayed in example set (2) on p. 42 of \cite{plum1992explicit} can be directly used for our case where $\Omega=(0,1)^2$.
	Let us write the solution $u \in \overline{B}(\hat{u},\rho,\|\cdot\|_V) $ as $u = \hat{u} + \rho w$ with some $ w\in V$, $\|w\|_V\leq 1$.
By applying Theorem \ref{theo:linfembedding} to the error $ \rho w = u-\hat{u} \in H^2(\Omega)$, we have
\begin{align*}
\left\|u-\hat{u}\right\|_{L^{\infty}}&=\rho\left\|\omega\right\|_{L^{\infty}}\\
&\leq\rho\left(c_{0}\left\|\omega\right\|_{L^2}+c_{1}\left\|\omega\right\|_{V}+c_{2}\left\|\Delta\omega\right\|_{L^2}\right)\\
&\leq\rho\left(c_{0}C_{2}+c_{1}+c_{2}\left\|\Delta\omega\right\|_{L^2}\right).
\end{align*}
The last term $\left\|\Delta\omega\right\|_{L^{2}(\Omega)}$ is estimated via
\begin{align*}
\rho\left\|\Delta\omega\right\|_{L^{2}}
= &\left\|f\left(\hat{u}+\rho\omega\right)+\Delta\hat{u}\right\|_{L^{2}}\\
\leq& \left\|f\left(\hat{u}+\rho\omega\right)-f\left(\hat{u}\right)\right\|_{L^{2}}+\left\|\Delta\hat{u}+f\left(\hat{u}\right)\right\|_{L^{2}},
\end{align*}
where we write $f(t)=\varepsilon^{-2}(t-t^3)$.
Then, the left integral is calculated as
\begin{align}
\label{eq:linfmidterm}
\left\|f\left(\hat{u}+\rho\omega\right)-f\left(\hat{u}\right)\right\|^2_{L^{2}}
= \varepsilon^{-4} \rho^2 \left\| w (1 - 3\hat{u}^2 - 3\rho\hat{u}w - \rho^2 w^2) \right\|^2_{L^{2}}.
\end{align}
Using H\"{o}lder's inequality, we have
\begin{align*}
\left\|f\left(\hat{u}+\rho\omega\right)-f\left(\hat{u}\right)\right\|^2_{L^{2}}
\leq &\varepsilon^{-4} \rho^2 \|w\|^2_{L^3} \left\| 1 - 3\hat{u}^2 - 3\rho\hat{u}w - \rho^2 w^2 \right\|^2_{L^{6}}\\
\leq &\varepsilon^{-4} \rho^2 C_3^2 (1 + 3 \|\hat{u}\|^2_{L^{12}} + 3 \rho C_{12} \|\hat{u}\|_{L^{12}} + \rho^2 C_{12}^2)^2.
\end{align*}
Thus, we have the following $L^{\infty}$-estimation
\begin{align}
&\left\|u-\hat{u}\right\|_{L^{\infty}}\leq c_{0}C_{2}\rho+c_{1}\rho+
c_{2}\left(\rho\varepsilon^{-2}C_{3}\left(1+3\left\|\hat{u}\right\|_{L^{12}}^{2}\right.\right.\nonumber \\
&~~~~~~~~~~~~~~~~~\left.\left.
+3\rho C_{12}\left\|\hat{u}\right\|_{L^{12}}+\rho^{2}C_{12}^{2}\right)+\left\|\Delta\hat{u}+\varepsilon^{-2}(\hat{u}-\hat{u}^{3})\right\|_{L^{2}}\right). \label{eq:linffin}
\end{align}

\begin{rem}
	Inequality \eqref{eq:linffin} was used in our computations.
	However, different estimates of the right-side norm of \eqref{eq:linfmidterm} are possible.
	One such example is to calculate
	\begin{align*}
	\left\| w (1 - 3\hat{u}^2 - 3\rho\hat{u}w - \rho^2 w^2) \right\|^2_{L^{2}}
	\leq \|w\|^2_{L^4} \left\| 1 - 3\hat{u}^2 - 3\rho\hat{u}w - \rho^2 w^2 \right\|^2_{L^{4}}.
	\end{align*}
	Other than this, expanding $w^2 (1 - 3\hat{u}^2 - 3\rho\hat{u}w - \rho^2 w^2)^2$ and applying H\"{o}lder's inequality to each term need somewhat tedious calculations but would give a better estimation.
	In this case, the maximal exponent $p$ required for the embedding constant $C_p$ is reduced to 6.
\end{rem}

Table \ref{table:allen} shows the verification results for \eqref{eq:allen}.
The values in rows $\tau$, $C(M_K)$, $\delta$, $K$, $\rho$, $\sigma$, $ |\Omega_{0}| $, and $ \varepsilon^{-2} $ represent strict upper bounds in decimal form; for instance, {\rm 6.0e-03} means $ 6.0 \times 10^{-3} $. The values in row $ \lambda_{1}(\Omega_0) $ are lower bounds, which were estimated using Corollary \ref{coro:lower_eigen}.
Integers $ M_u $, $ M_K $, and $ 2^m $ are displayed as strict integers.
Volumes $ |\Omega_0| $ were estimated by dividing $ \Omega $ into $ 2^m $ smaller congruent squares and implementing interval arithmetic on them to confirm $ (\hat{u}+\sigma)(\hat{u}-\sigma) \leq 0 $.
Approximate solutions $ \hat{u} $ and the corresponding defect bounds $ \delta $ were computed in double-double precision using the data type ``dd'' or ``interval$ < $ dd $ > $'' provided in the kv Library {\rm \cite{kashiwagikv}}.
Although the values in row $ \rho  $ represent the error bounds in the sense of the norm \eqref{tau_norm} for corresponding $ \tau $'s,
these can be regarded as upper bounds for them in the sense of the usual norm $ \|\cdot\|_V $ required in Subsection \ref{subsec:maintheo}.  

In all cases, Corollary \ref{coro:main} estimated the numbers of nodal domains under the condition \eqref{cond:coro1}. This indicated that $\#\text{\rm N.N.D.}(u)$ for type (A) was 1 or 2.
The reason why $\#\text{\rm N.N.D.}(u)$ was not strictly determined is that it is difficult to determine whether the negative nodal domains that appear to be composed of two parts are connected or not through the boundary (see Fig.~\ref{fig:nodal_line} (A)).

For solutions of type (B), neither $\#\text{\rm P.N.D.}(u)$ nor $\#\text{\rm N.N.D.}(u)$ was strictly determined.
However, we can determine both $\#\text{\rm P.N.D.}(u)$ and $\#\text{\rm N.N.D.}(u)$ to be two by considering the symmetry of the solutions and the topology of nodal lines (that is, ``how the lines intersect'') in the following discussion:
Let us define $v(x_1,x_2):=-u(x_2,x_1)$ so that $v$ is also a solution of \eqref{eq:allen}.
We define $\hat{v}(x_1,x_2):=-\hat{u}(x_2,x_1)$ for each approximate solution $\hat{u}$ for type (B), assuming that $\|\hat{v}+\hat{u}\|_V \leq \eta$ for small $\eta>0$.
Actually, we confirmed this inequality when selecting $\eta=$1e-15 in all cases for type (B).
Then, we have $\|v-\hat{u}\|_V \leq \rho + \eta$.
We again checked the conditions required by \cite[Theorem 1]{plum2008} with $\rho$ replaced by $\rho + \eta$, thereby proving the uniqueness of the solution $u$ in $\overline{B}(\hat{u},\rho + \eta,\|\cdot\|_V)$.
Therefore, we concluded $u=v$ and thus $u(0.5,0.5)=-u(0.5,0.5)=0$.
Similarly, the symmetry of the solutions was confirmed with respect to the lines $x=0.5$ and $y=0.5$ by considering the transformed functions $u(1-x,y)$ and $u(x,1-y)$, respectively.
Hence, by considering the topology of nodal lines, we have confirmed $\#\text{\rm P.N.D.}(u)=\#\text{\rm N.N.D.}(u)=2$.

For solutions of type (C), Corollary \ref{coro:main} strictly determined both $\#\text{\rm P.N.D.}(u)$ and $\#\text{\rm N.N.D.}(u)$ without a topological consideration such as that for type (B).
These solutions are special because the inner nodal line does not touch the original boundary $ \partial \Omega $ (see Fig.~\ref{fig:nodal_line} (C)).
In this sense, we can regard the inner nodal line can as a ``new'' nontrivial Dirichlet boundary.
To our knowledge, the existence of such solutions of problem \eqref{eq:allen} has not been proved.
Our method confirmed this existence using the methods in \cite{plum2008,plum1992explicit,tanaka2014verified} and Corollary \ref{coro:main}.

From the above verification results, we can determine the location of the nodal line of $ u $.
Fig.~\ref{fig:nodal_line} shows verified nodal lines of the solutions {\rm (A)}, {\rm (B)}, and {\rm (C)} for $ \varepsilon = 0.08 $.
We confirmed that $ (\hat{u}+\sigma)(\hat{u}-\sigma) \leq 0 $ on the red squares displayed in Fig.~\ref{fig:nodal_line}.
For ease of viewing, these are drawn with rough accuracy by dividing the domain $ \Omega $ into $ 2^{12} $ smaller congruent squares and implementing interval arithmetic on each.
In Fig.~\ref{fig:accurate_nodal_line}, we display a more accurate nodal line via division into $ 2^{16} $ smaller congruent squares.
Our method proved the nonexistence of nodal domains of $u$ in the union of the red squares for each solution.
Simultaneously, the sign of $ u $ is strictly determined in the blanks.

\begin{table}[t]
	\caption{Verification results for \eqref{eq:allen} on $\Omega=(0,1)^{2}$.}
	\label{table:allen}
	\begin{center}
		\fontsize{6pt}{10pt}\selectfont
		
		\begin{tabular}{l|lll|lll|lll}
			\hline
			ID&
			\multicolumn{3}{l|}{(A)}&
			\multicolumn{3}{l|}{(B)}&
			\multicolumn{3}{l}{(C)}\\

			$\varepsilon$&
			0.1&
			0.08&
			0.06&
			0.1&
			0.08&
			0.06&
			0.1&
			0.08&
			0.06\\			
			
			\hline	\hline
			
			$M_u$&
			100&
			100&
			100&
			80&
			80&
			80&
			100&
			100&
			100\\

			$M_K$&
			80&
			80&
			80&
			80&
			80&
			80&
			50&
			100&
			100\\
			$\tau$&
			0&
			102.3&
			436.3&
			0&
			126.1&
			481.7&
			0&
			217.5&
			545.6\\
			
			$C^{\tau}(M_K)$&
			6.0e-03&
			6.0e-03&
			6.1e-03&
			6.0e-03&
			6.1e-03&
			6.1e-03&
			9.4e-03&
			4.9e-03&
			4.9e-03\\
			$\delta$&
			1.6e-16&
			5.4e-13&
			2.8e-08&
			1.5e-16&
			1.1e-12&
			1.5e-08&
			1.5e-16&
			3.7e-15&
			7.2e-13\\

			$K$&
			1113&
			10.4&
			53.4&
			263&
			12.9&
			13.4&
			261&
			14.8&
			12.3\\		
			
			$ \rho$&
			4.0e-14&
			5.1e-13&
			6.9e-08&
			
			8.8e-15&
			1.2e-12&
			8.6e-09&
			
			8.8e-15&
			4.1e-15&
			3.8e-13\\

			$ \sigma$&
			1.5e-13&
			1.1e-11&
			3.2e-06&
			
			3.2e-14&
			1.9e-11&
			3.3e-07&
			
			3.2e-14&
			5.5e-14&
			1.4e-11\\
			
			$2^m$&
			$2^{20} $&
			$2^{20} $&
			$2^{20} $&
			
			$2^{20} $&
			$2^{22} $&
			$2^{24} $&
			
			$2^{20} $&
			$2^{20} $&
			$2^{20} $\\		
			
			$|\Omega_0|$&
			9.5e-02&
			1.1e-02&
			1.1e-02&
			
			4.6e-02&
			2.9e-02&
			1.6e-02&
			
			9.0e-03&
			1.1e-02&
			1.4e-02\\
			$\lambda_{1} (\Omega_0) \geq$&
			664.6&
			625.7&
			597.6&
			
			137.5&
			222.8&
			396.3&
			
			704.7&
			574.1&
			459.0\\
			
			$\varepsilon^{-2}$&
			100.0&
			156.3&
			277.8&
			
			100.0&
			156.3&
			277.8&
			
			100.0&
			156.3&
			277.8\\

			$\text{\rm \#P.N.D.}(u)$&
			\multicolumn{3}{l|}{1}&
			\multicolumn{3}{l|}{1--2}&
			\multicolumn{3}{l}{1}\\

			$\text{\rm \#N.N.D.}(u)$&
			\multicolumn{3}{l|}{1--2}&
			\multicolumn{3}{l|}{1--2}&
			\multicolumn{3}{l}{1}\\

			$\text{\rm \#N.D.}(u)$&
			\multicolumn{3}{l|}{2--3}&
			\multicolumn{3}{l|}{2--4}&
			\multicolumn{3}{l}{2}\\

			\hline
		\end{tabular}
	\end{center}
	~\\
	\scriptsize{
	$\varepsilon$: positive parameter in \eqref{eq:allen}.\\
	$M_u$: number of basis functions for constructing approximate solution $ \hat{u} $; see \eqref{eq:uhat}.\\
	$M_K$: number of basis functions for calculating $ K $.\\
	$\tau$: nonnegative number satisfying \eqref{select:tau}.\\
	$C^{\tau}(M_K)$: interpolation constant calculated via \eqref{eq:CMtau}.\\
	$\delta$: defect bound required in \cite[Theorem 1]{plum2008} .\\
	$K$: norm of the inverse operator required in \cite[Theorem 1]{plum2008} .\\
	$ \rho $: $ H^1_0 $-error bound.\\
	$ \sigma $: $ L^{\infty} $-error bound.\\
 	$|\Omega_0|$: volume of $\Omega_0$; $\Omega_0$ is defined just before Subsection \ref{subsec:maintheo}.\\
 	$\lambda_{1} (\Omega_0)$: first eigenvalue of $ -\Delta $ on $ \Omega_0 $ defined by \eqref{eq:defi-diri-eigenvalue}.\\
 	$\text{\rm \#P.N.D.}(u)$ ($\text{\rm \#N.N.D.}(u)$): number of positive (negative) nodal domains of $ u $; see Definition \ref{defi:nodaldomains}.\\
 	$\text{\rm \#N.D.}(u)$: number of nodal domains that satisfy $\text{\rm \#N.D.}(u)=\text{\rm \#P.N.D.}(u)+\text{\rm \#N.N.D.}(u)$.
	}
\end{table}

\newcommand{\sizee}{0.29\hsize}
\begin{figure}[t]
	\footnotesize{$\varepsilon=0.1$~}
	\begin{minipage}{\sizee}
		\begin{center}
			(A)\\
			\includegraphics[height=33 mm]{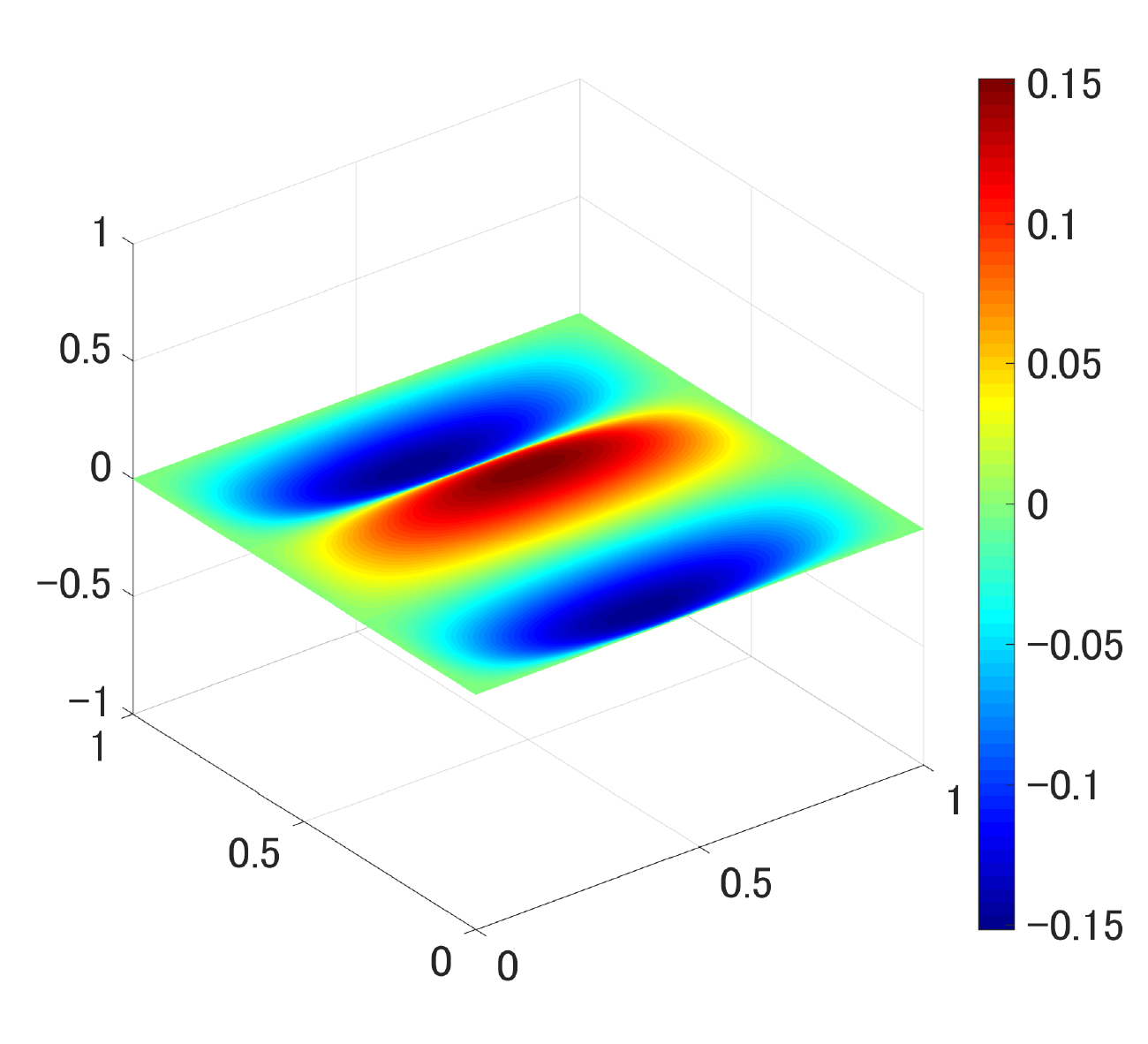}
		\end{center}
		~
	\end{minipage}
	\begin{minipage}{\sizee}
		\begin{center}
			(B)\\
			\includegraphics[height=33 mm]{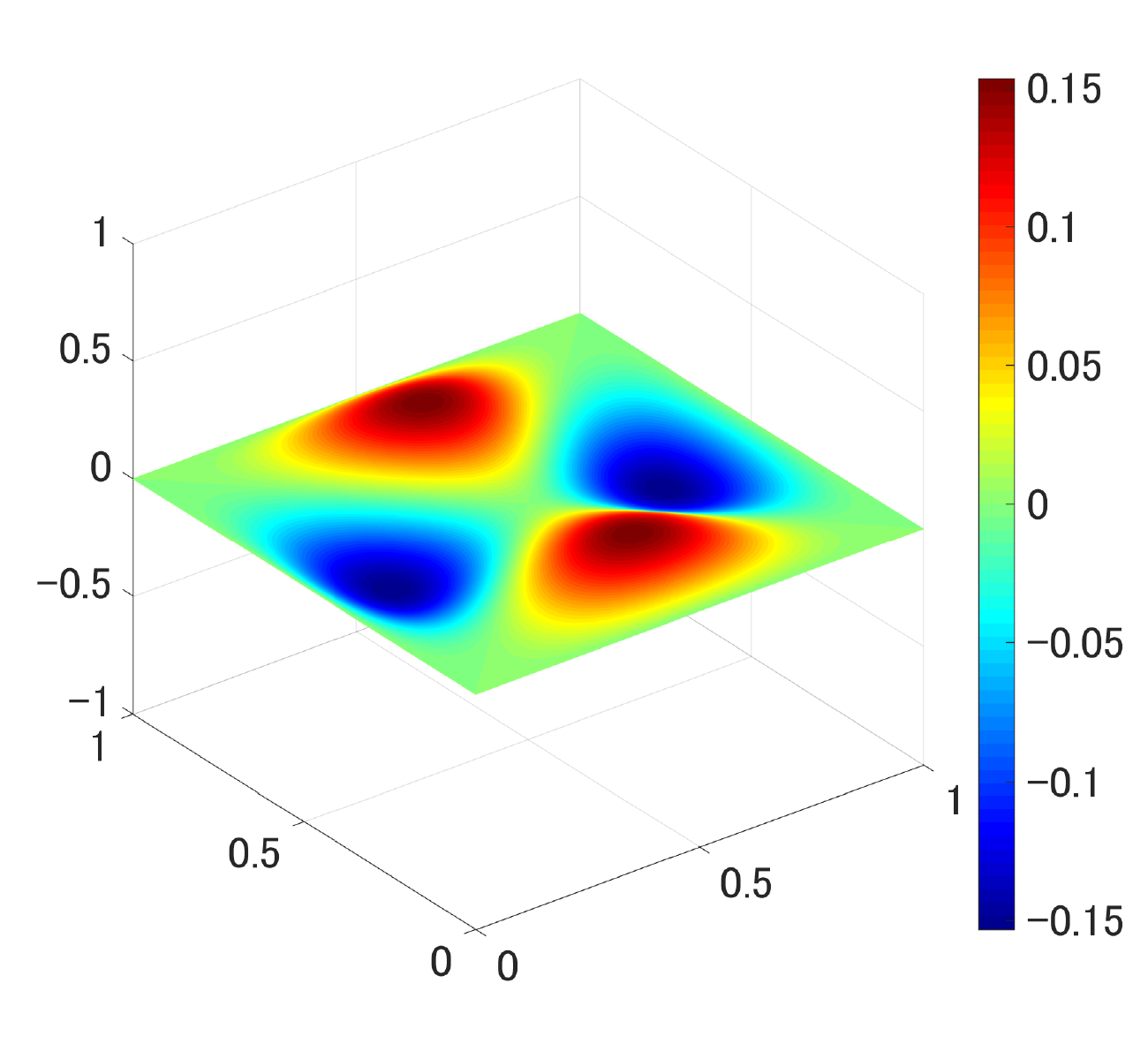}
		\end{center}
		~
	\end{minipage}
	\begin{minipage}{\sizee}
		\begin{center}
			(C)\\
			\includegraphics[height=33 mm]{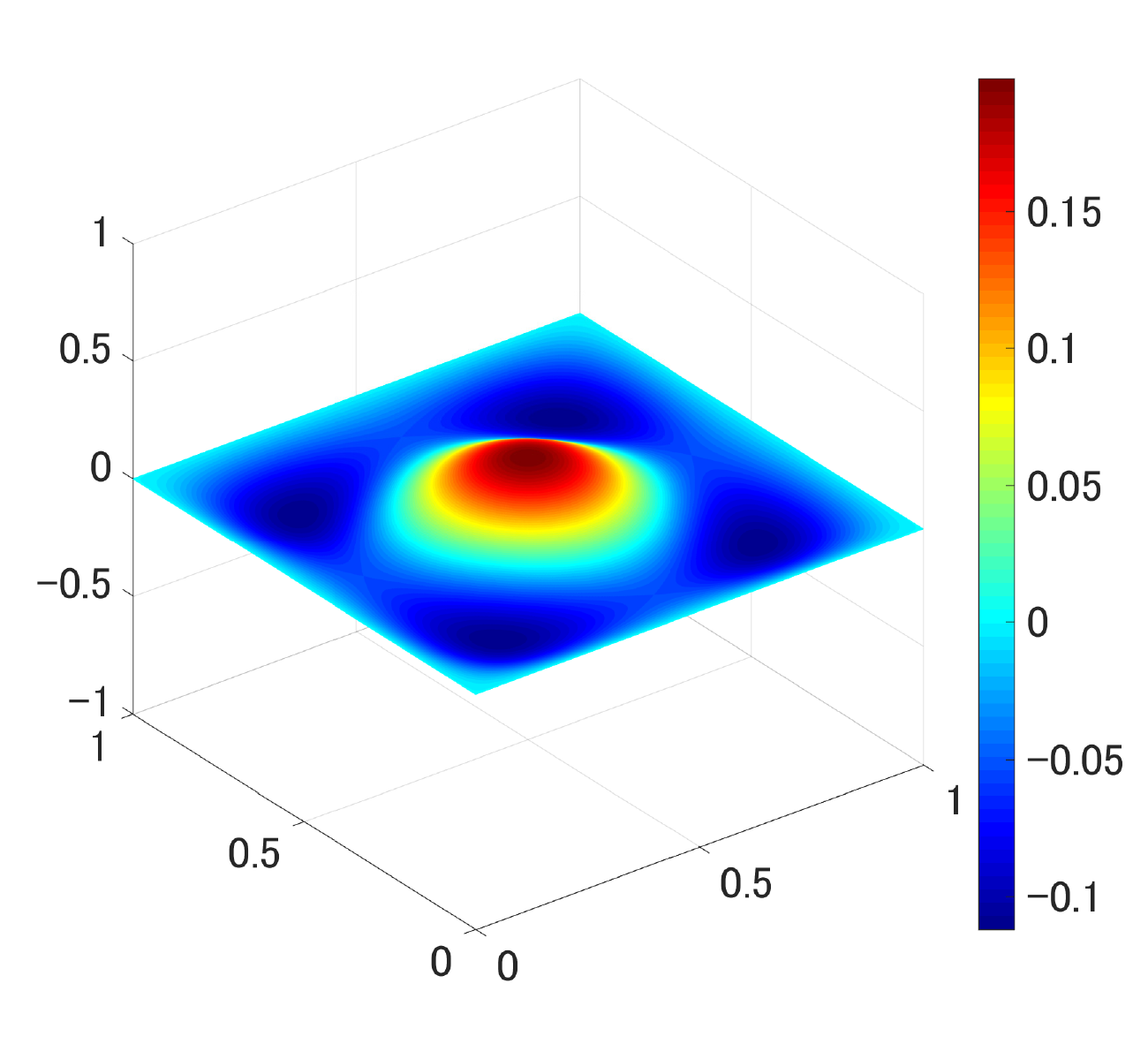}
		\end{center}
		~
	\end{minipage}
	
	\footnotesize{$\varepsilon=0.08$}
	\begin{minipage}{\sizee}
		\begin{center}
			\includegraphics[height=33 mm]{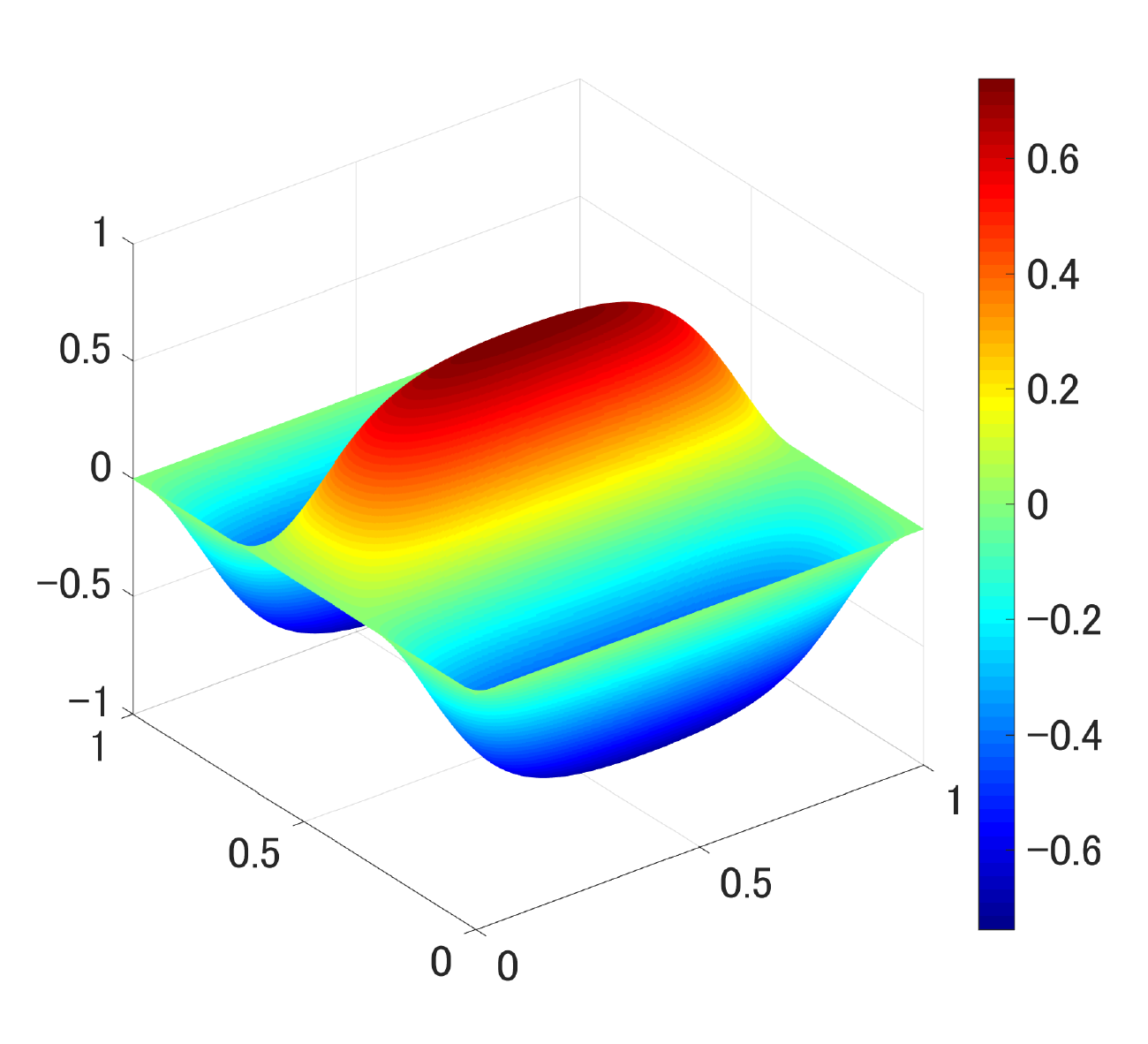}
		\end{center}
		~
	\end{minipage}
	\begin{minipage}{\sizee}
		\begin{center}
			\includegraphics[height=33 mm]{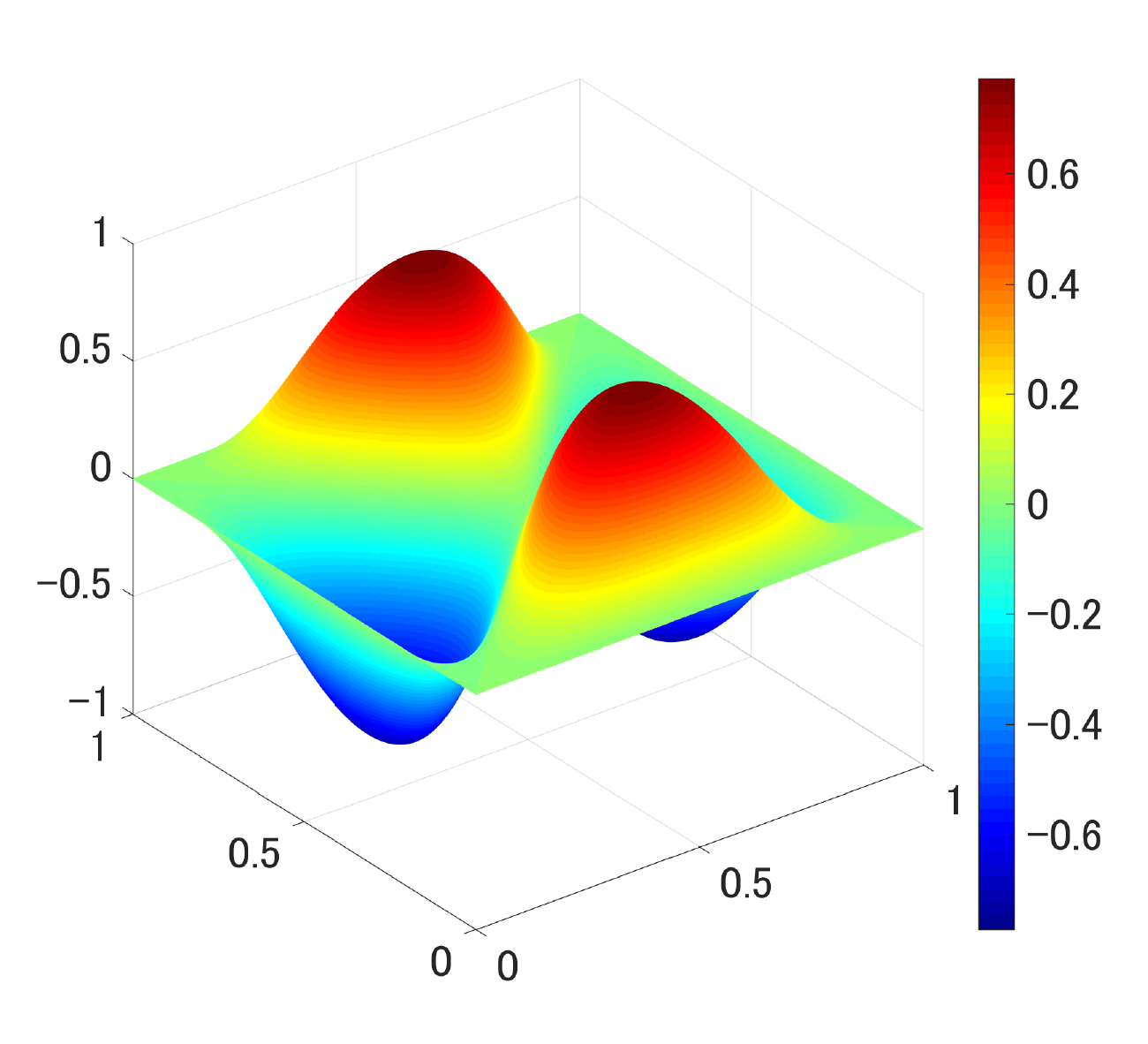}
		\end{center}
		~
	\end{minipage}
	\begin{minipage}{\sizee}
		\begin{center}
			\includegraphics[height=33 mm]{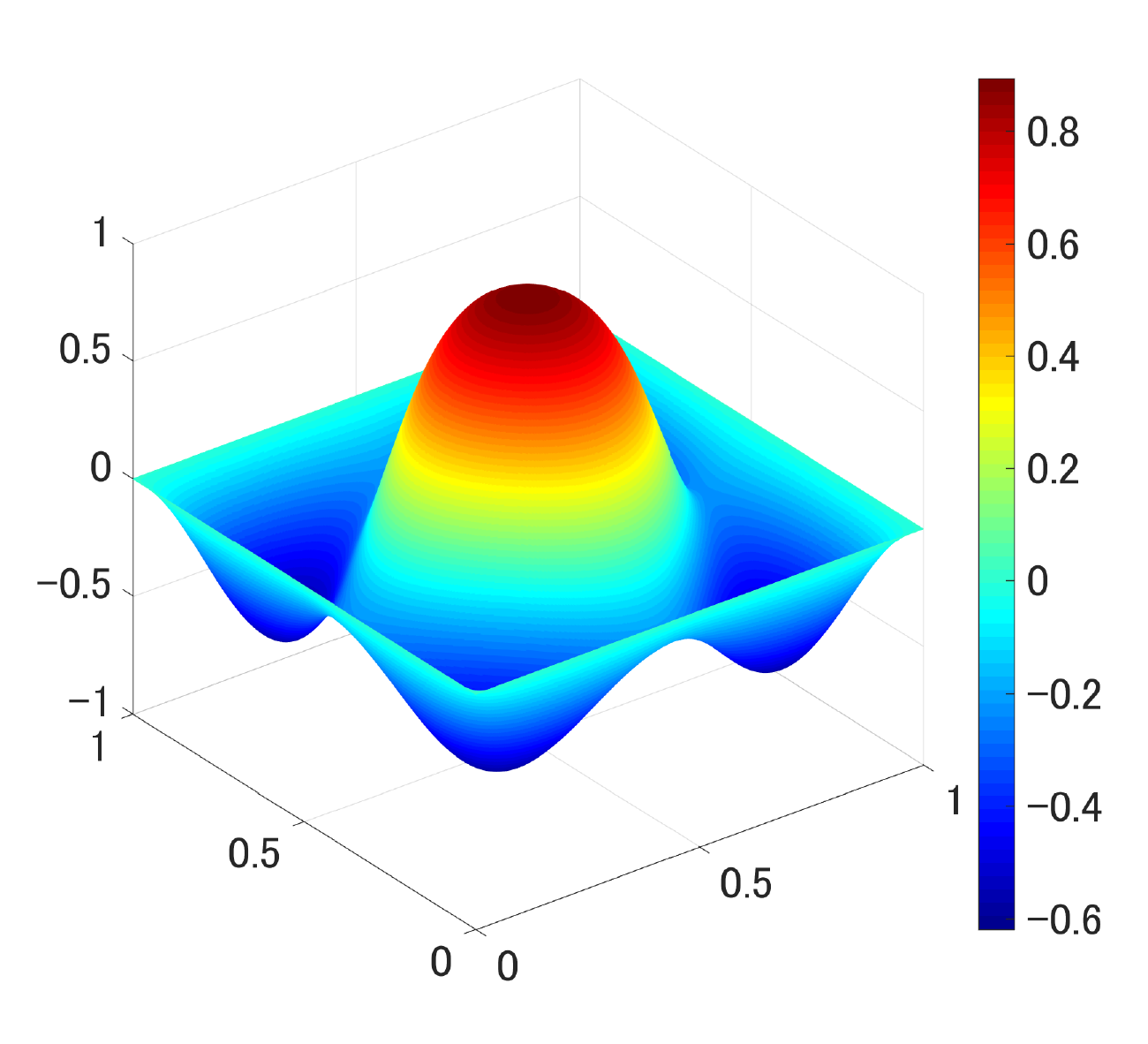}
		\end{center}
		~
	\end{minipage}
	
	\footnotesize{$\varepsilon=0.06$}
	\begin{minipage}{\sizee}
		\begin{center}
			\includegraphics[height=33 mm]{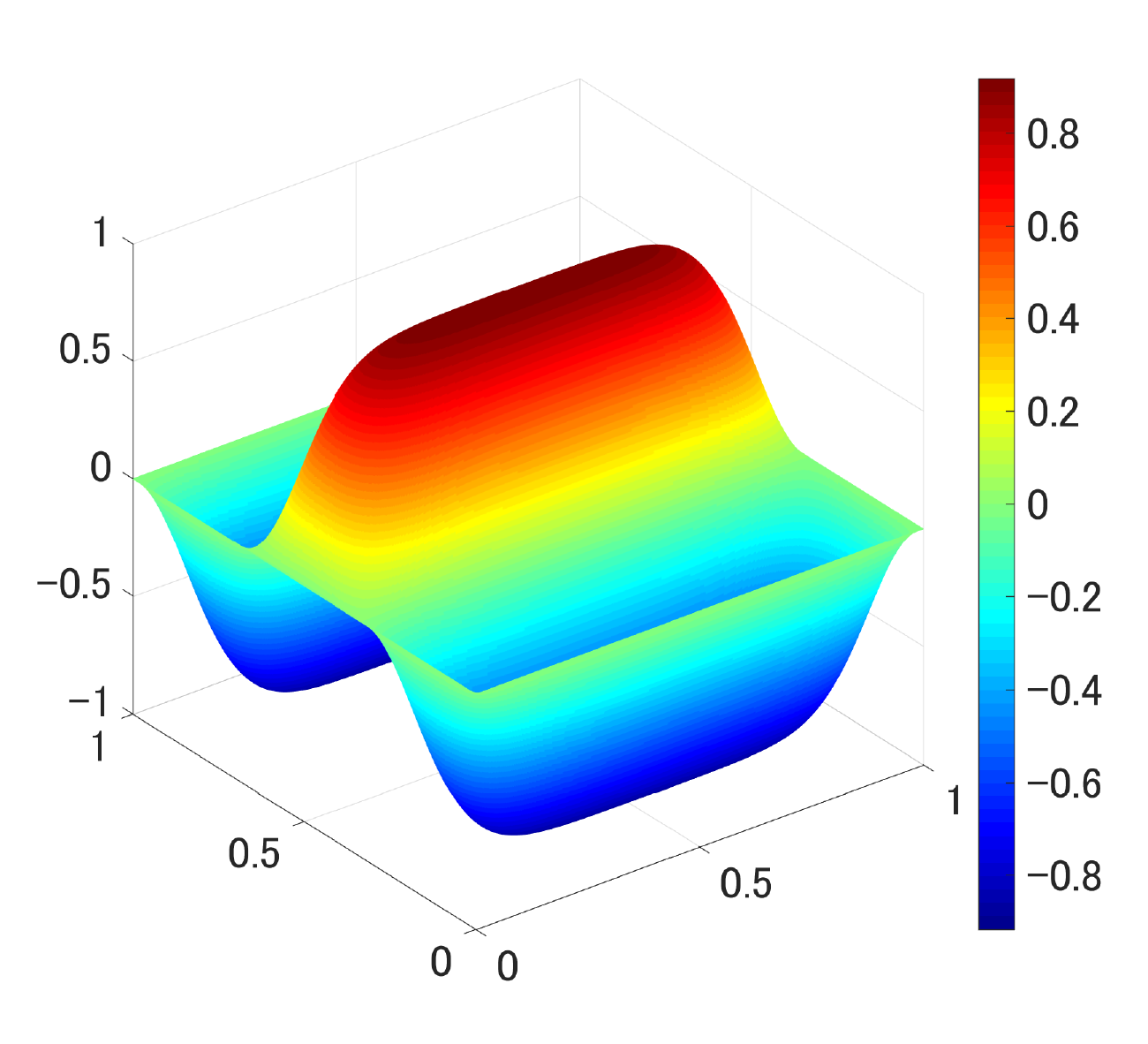}
		\end{center}
	\end{minipage}
	\begin{minipage}{\sizee}
		\begin{center}
			\includegraphics[height=33 mm]{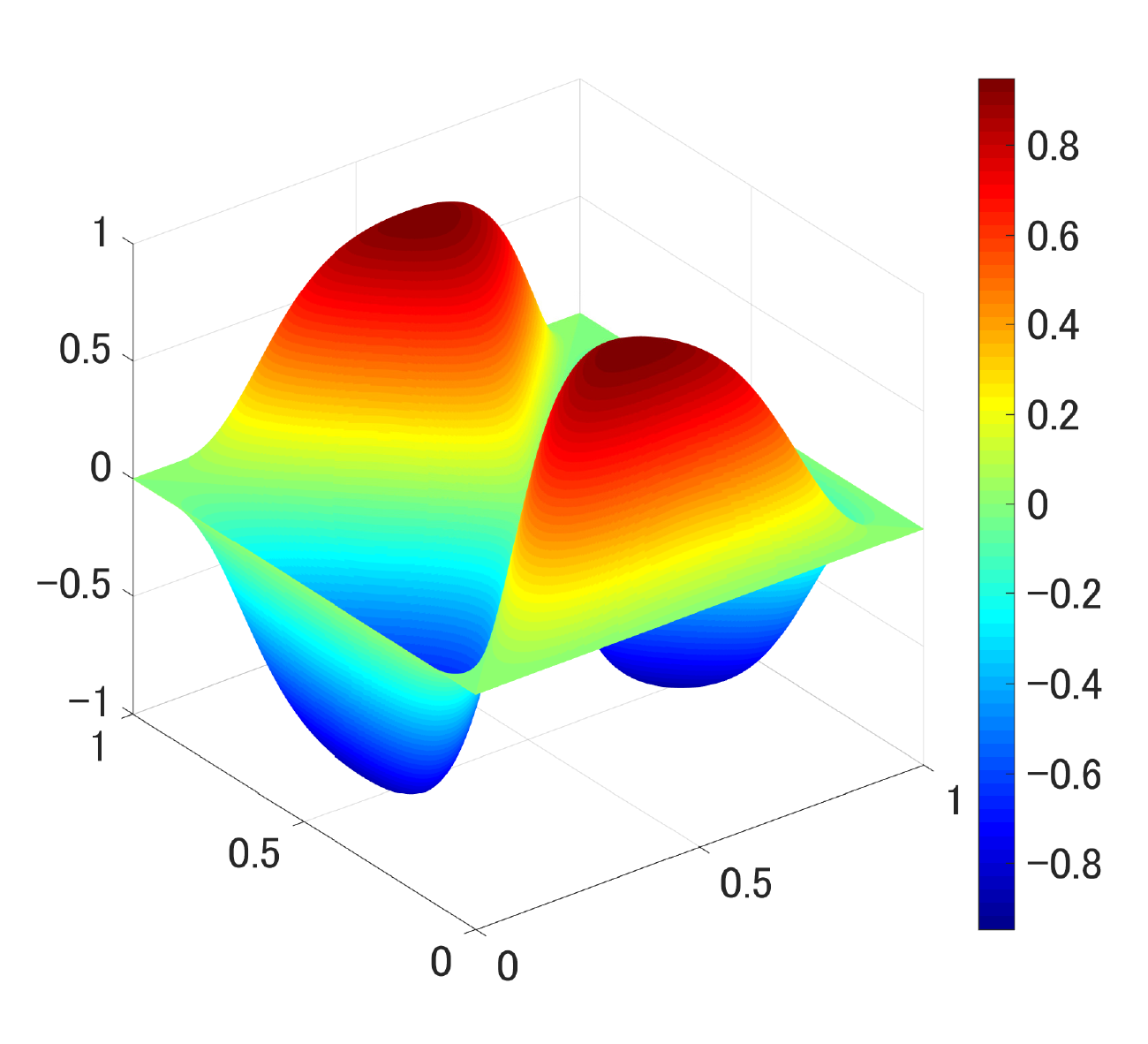}
		\end{center}
	\end{minipage}
	\begin{minipage}{\sizee}
		\begin{center}
			\includegraphics[height=33 mm]{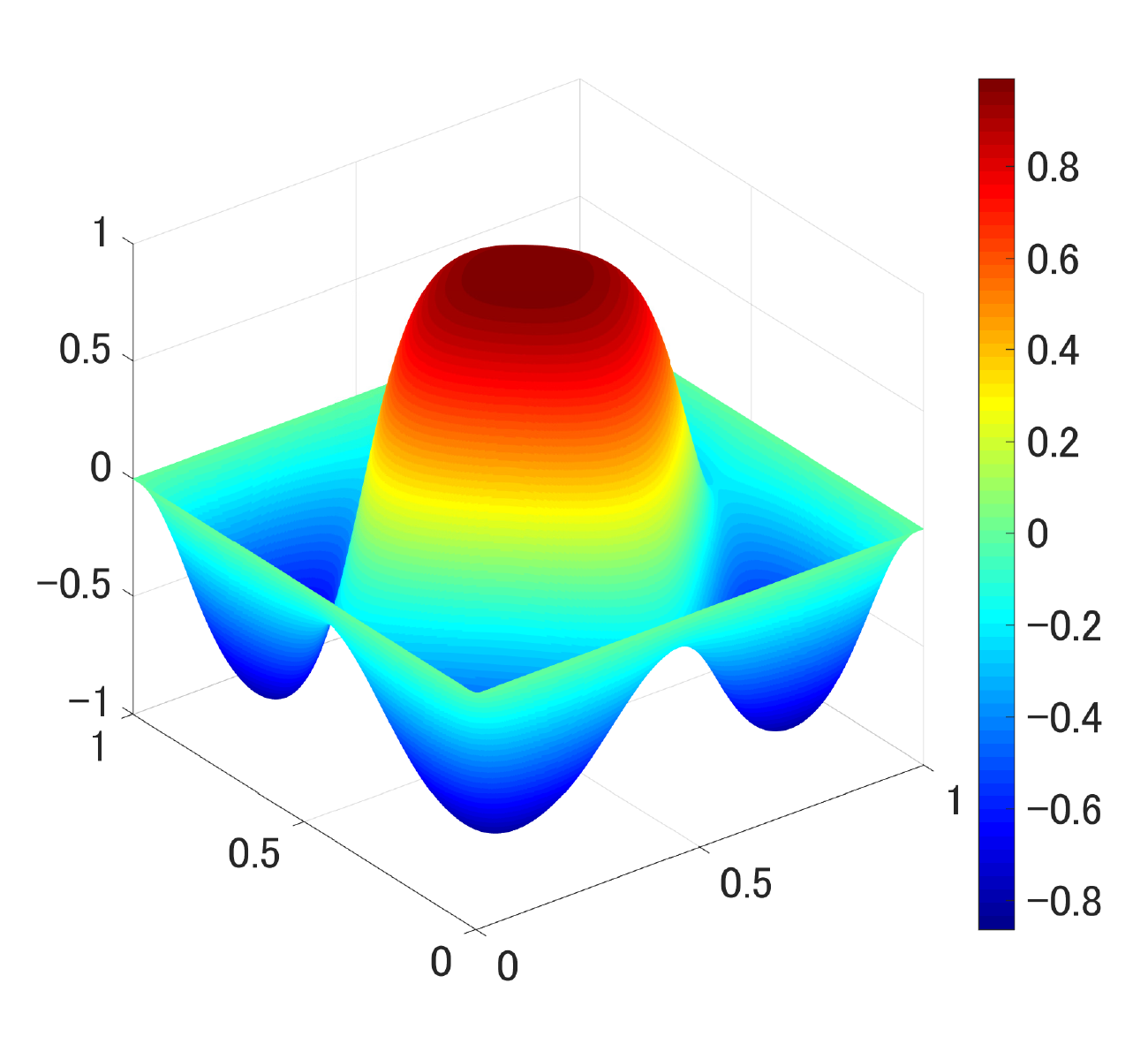}
		\end{center}
	\end{minipage}
	
	\caption{Sign-changing solutions of \eqref{eq:allen} on $\Omega=(0,1)^{2}$.}
	\label{fig:allen}
\end{figure}

\renewcommand{\sizee}{0.325\hsize}
\begin{figure}
	\begin{minipage}{\sizee}
		\begin{center}
			~~(A)\\
			\includegraphics[height=30 mm]{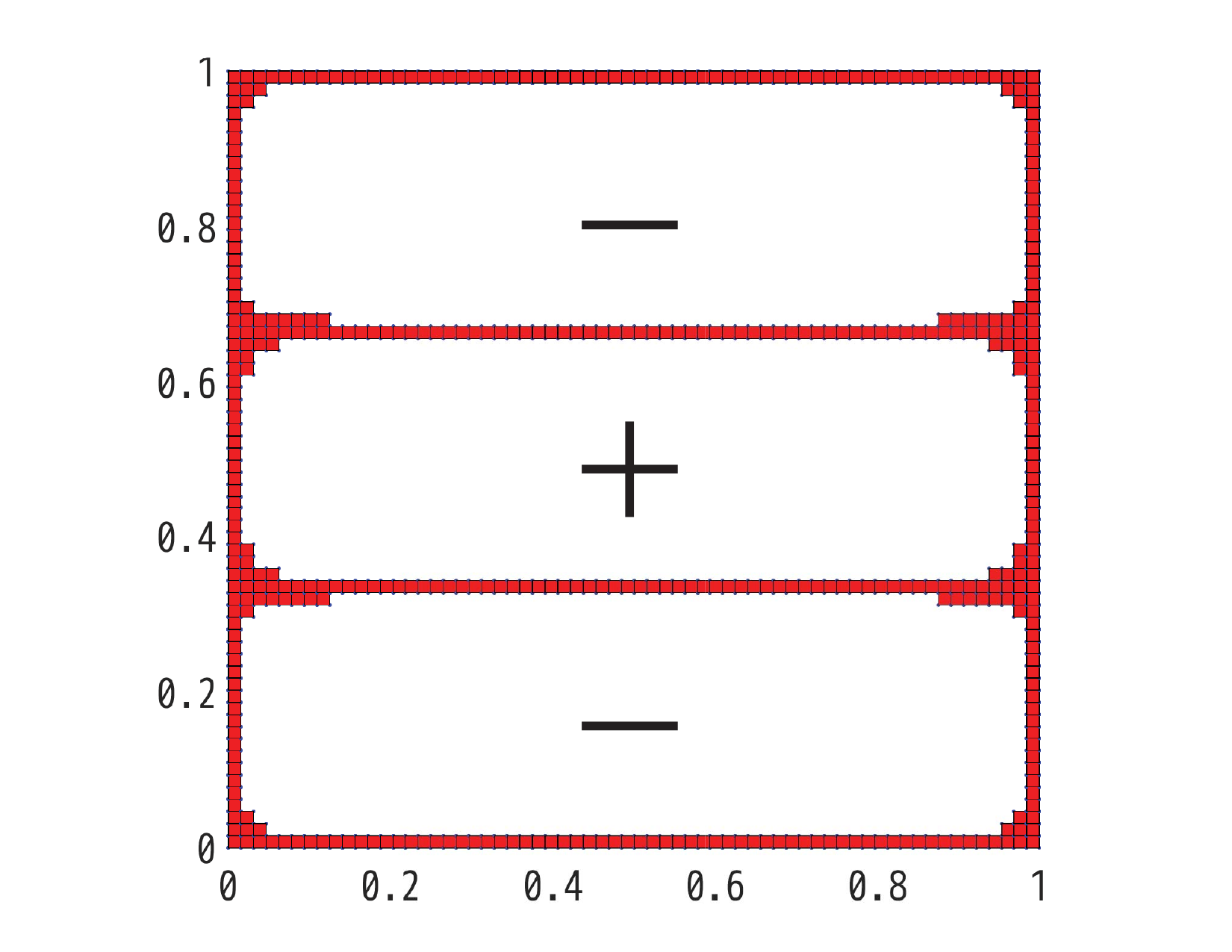}
		\end{center}
	\end{minipage}
	\begin{minipage}{\sizee}
		\begin{center}
			~~(B)\\
			\includegraphics[height=30 mm]{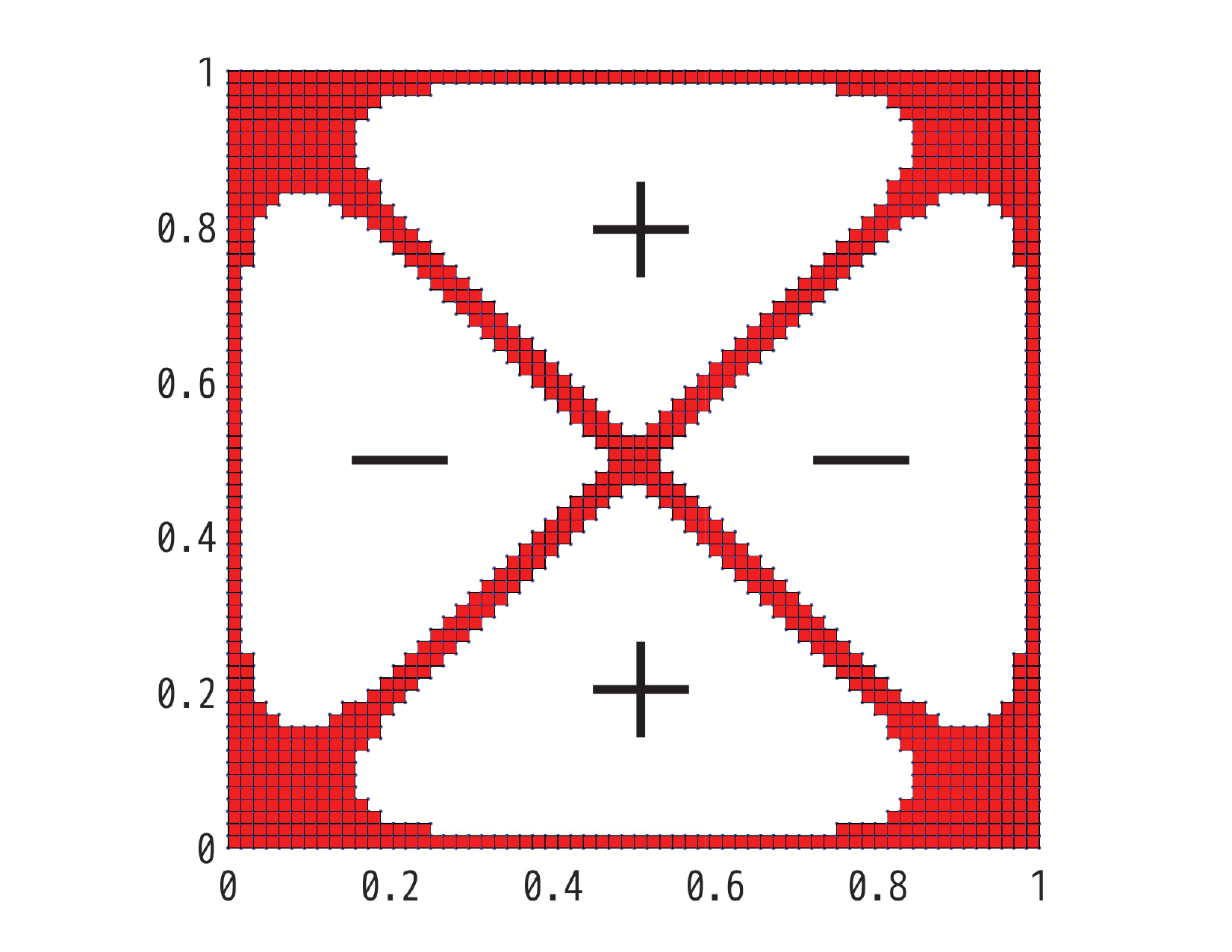}
		\end{center}
	\end{minipage}
	\begin{minipage}{\sizee}
		\begin{center}
			~~(C)\\
			\includegraphics[height=30 mm]{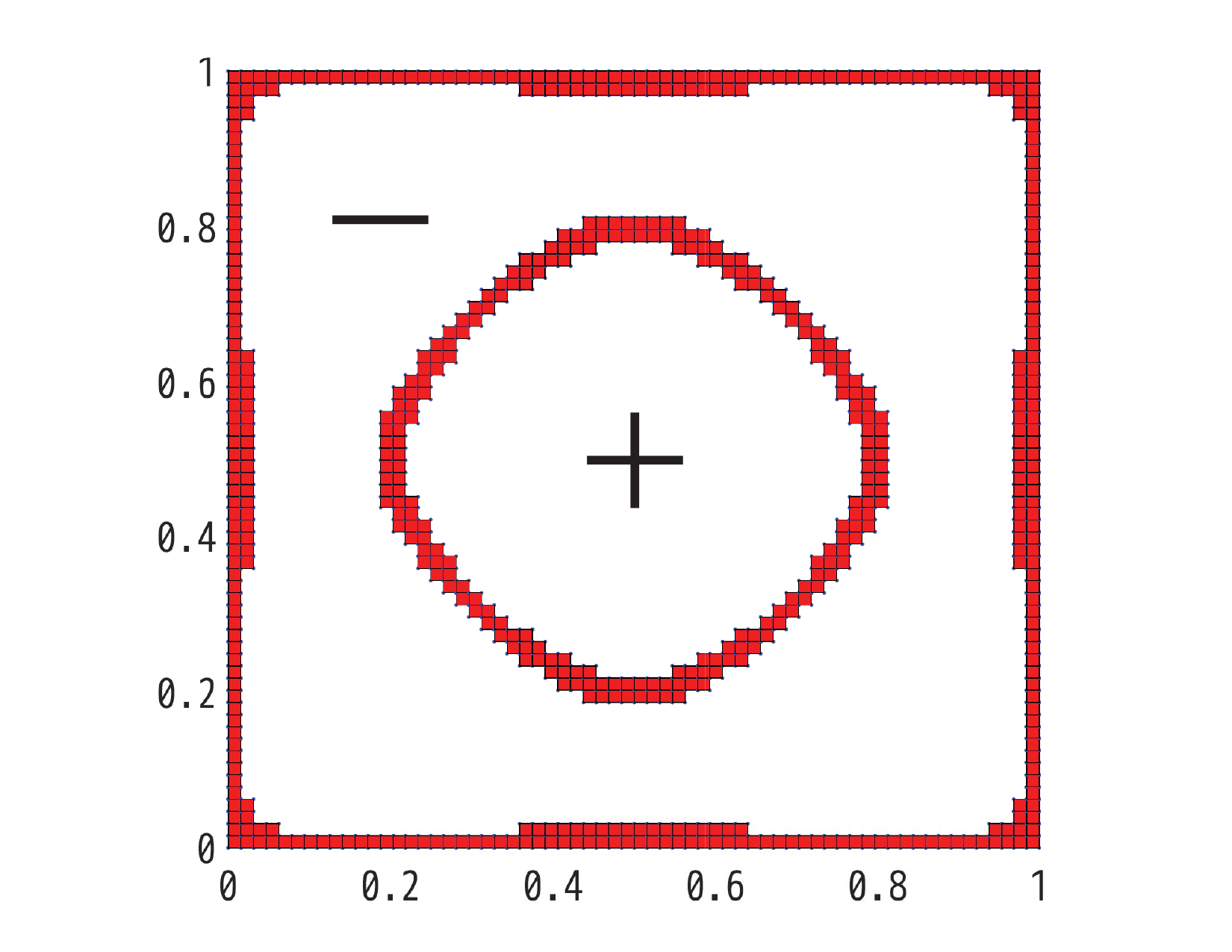}
		\end{center}
	\end{minipage}
	\caption{Verified nodal lines of the solutions {\rm (A)}, {\rm (B)}, and {\rm (C)} for $ \varepsilon = 0.08 $.
		These were drawn with rough accuracy by dividing the domain $ \Omega $ into $ 2^{12} $ smaller congruent squares and implementing interval arithmetic on each.
	}
	\label{fig:nodal_line}
\end{figure}
\begin{figure}
	\begin{minipage}{\sizee}
		\begin{center}
			\includegraphics[height=30 mm]{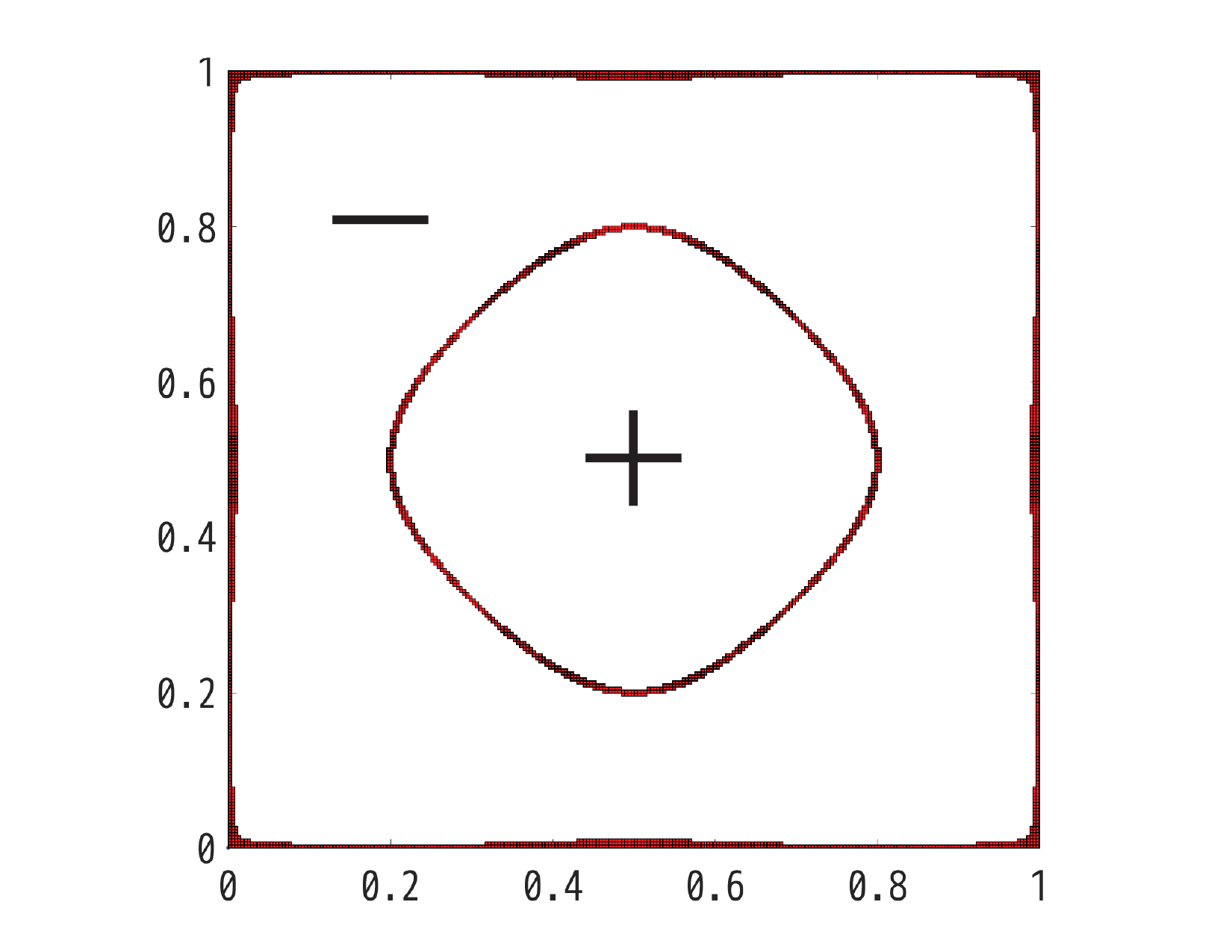}
		\end{center}
	\end{minipage}
	\begin{minipage}{\sizee}
		\begin{center}
			\includegraphics[height=30 mm]{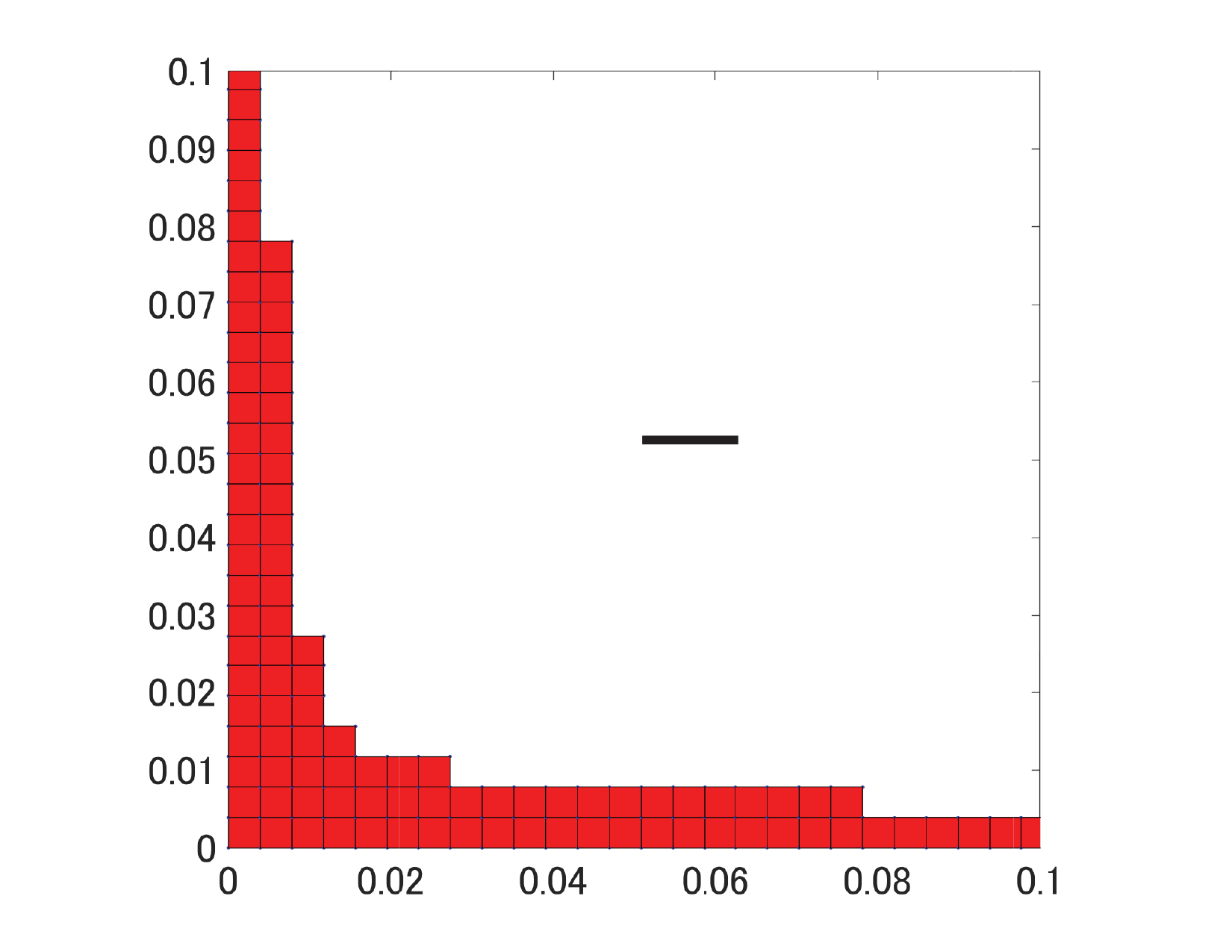}
		\end{center}
	\end{minipage}
	\begin{minipage}{\sizee}
		\begin{center}
			\includegraphics[height=30 mm]{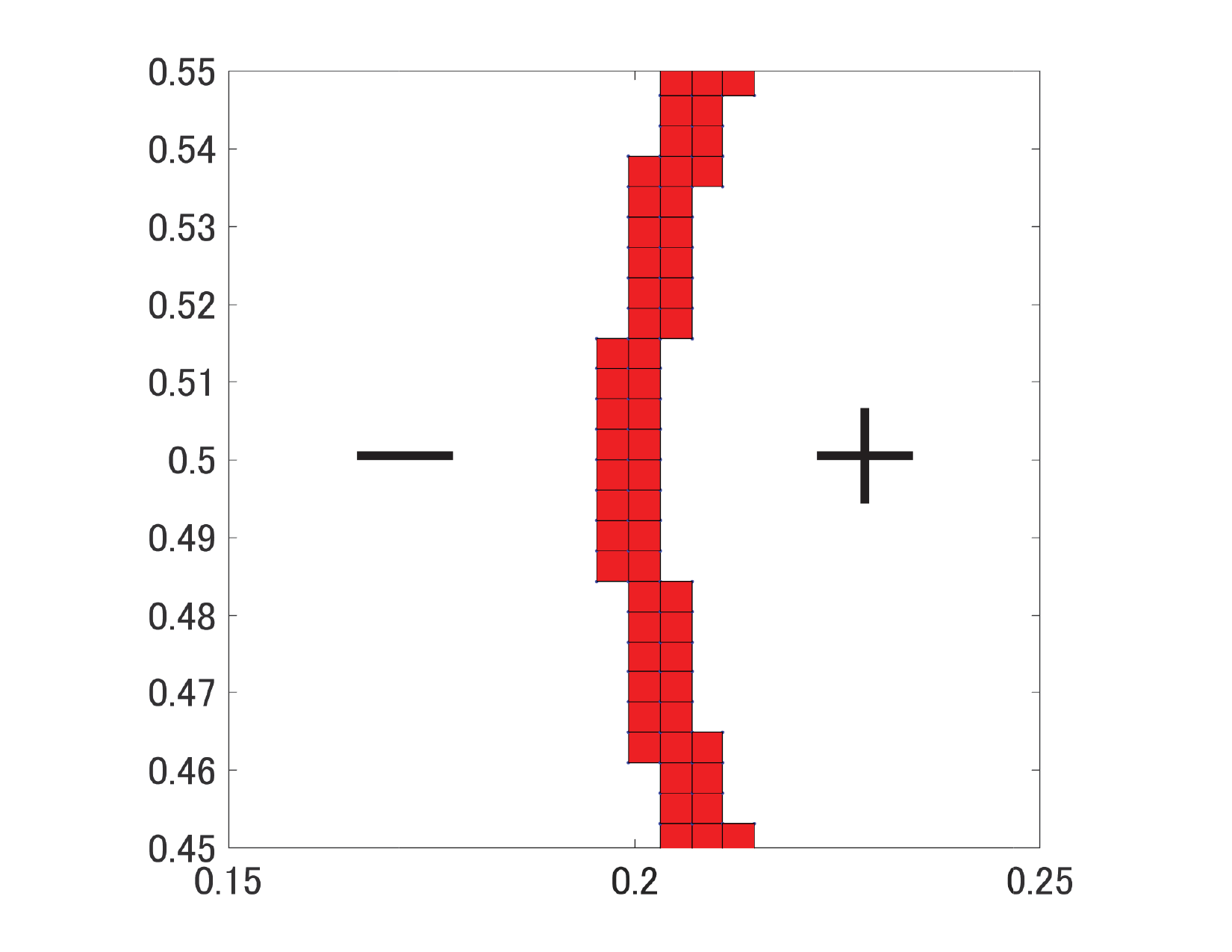}
		\end{center}
	\end{minipage}
	\caption{Accurate inclusion of the nodal line of solution {\rm (C)} with $ \varepsilon=0.08 $ $ ( $left$ ) $, and its magnifications $ ( $center and right$ ) $.
		These were drawn by dividing the domain $ \Omega $ into $ 2^{16} $ smaller congruent squares and implementing interval arithmetic on each.
	}
	\label{fig:accurate_nodal_line}
\end{figure}
\section{Extension to other boundary value conditions} \label{sec:othreboundary}
In this section, we extend the results from Section \ref{sec:diri} to Neumann \eqref{eq:mainn} and mixed \eqref{eq:mainm} boundary conditions.
Because \eqref{eq:mainm} coincides with \eqref{eq:mainn} when $ \Gamma_D = \emptyset $ and $ \Gamma_N = \partial \Omega $, we discuss the application to \eqref{eq:mainm}.
The Dirichlet problem \eqref{eq:maind} is regarded as \eqref{eq:mainn} for the special case $ \Gamma_N = \emptyset $ and $ \Gamma_D = \partial \Omega $.
Therefore, the generalization to \eqref{eq:mainm} is considered as an extension of the method provided in Section \ref{sec:diri}. 

We introduce (or replace) some required notation.
We extend the solution space $ V $ to $ V ~(= V(\Omega,\Gamma_D)) :=  \{ u \in H^1(\Omega) : u = 0~\mbox{on} ~ \Gamma_D\}$ adapting to the corresponding boundary value condition.
The inner product endowed with $ V $ should be changed according to the boundary value conditions.
When $ \Gamma_D = \emptyset $ (Neumann condition), we endow $ V $ with the inner product $(u, v)_{V}=(\nabla u, \nabla v)_{L^2}+(u, v)_{L^2}$;
otherwise (Dirichlet or mixed condition), we endow it with $(u, v)_{V}=(\nabla u, \nabla v)_{L^2}$.
The norm endowed with $ V $ is always $\| u \|_{V} = \sqrt{(u, u)_{\smash{V}}}$ regardless of the boundary conditions.
Additionally, the topological dual of $ V $ is denoted by $ V^* $.
In this function space, the weak form of \eqref{eq:mainm} is characterized by the form \eqref{main:fpro} with the same assumptions for nonlinearity $ f $ introduced in Section \ref{sec:diri}.
To avoid confusion, we call \eqref{main:fpro} corresponding to \eqref{eq:mainn} (assuming $\Gamma_D = \emptyset$ and $\Gamma_N = \partial \Omega$) the {\it N-problem}, and call \eqref{main:fpro} corresponding to \eqref{eq:mainm} (assuming $\Gamma_D \neq \emptyset$ and $\Gamma_N \neq \emptyset$) the {\it M-problem}.

We extend the definition of embedding constants.
A norm bound for the embedding $V(\Omega,\Gamma_D) \hookrightarrow L^{p+1}(\Omega)$ is denoted by $C_{p+1}~(=C_{p+1}(\Omega,\Gamma_D))$, which satisfies
\begin{align}
\label{embedding-mix}
\left\|u\right\|_{L^{p+1}(\Omega)}\leq C_{p+1}\left\|u\right\|_{V(\Omega,\Gamma_D)}~~~{\rm for~all}~u\in V,
\end{align}
where $p\in [1,\infty)$ when $N=2$ and $p\in [1,p^*]$ when $ N\geq3 $.
In the following definition \eqref{eq:eig-mix}, we assume $ \Gamma_D \neq \emptyset $;
considering this case is sufficient for completing the later discussion.
The first eigenvalue of $ -\Delta $ on $ V(\Omega,\Gamma_D) $ is denoted by $ \lambda_{1}(\Omega,\Gamma_D) $, the definition of which is
\begin{align}
\label{eq:eig-mix}
\lambda_{1}(\Omega,\Gamma_D) := \inf_{v\in V\backslash{\{0\}}} \frac{\|v\|_{V(\Omega,\Gamma_D)}^2}{\|v\|_{L^2(\Omega)}^2}.
\end{align}
\begin{lem}\label{lem:mix}
	The same argument in Lemma $ \ref{lem:positive} $ is true for the M-problem \eqref{main:fpro} with a nonempty $ \Gamma_D $, where the old notation of eigenvalue $ \lambda_1(\Omega) $ and embedding constants $ C_{p_{i}+1}(\Omega) $ is replaced with the new notation  $ \lambda_1(\Omega,\Gamma_D) $ and $ C_{p_{i}+1}(\Omega,\Gamma_D) $, respectively.
\end{lem}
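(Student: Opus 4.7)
The plan is to reproduce the three-line argument that proves Lemma~\ref{lem:positive} essentially verbatim, with the symbols $\lambda_1(\Omega)$ and $C_{p_i+1}(\Omega)$ replaced throughout by $\lambda_1(\Omega,\Gamma_D)$ and $C_{p_i+1}(\Omega,\Gamma_D)$, and to verify that each ingredient of that argument remains valid in the mixed-boundary setting.

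First, I would write down the weak formulation of the M-problem: a weak solution $u\in V=V(\Omega,\Gamma_D)$ satisfies $(\nabla u,\nabla v)_{L^2}=\langle F(u),v\rangle$ for all $v\in V$. The Neumann condition on $\Gamma_N$ is a natural boundary condition and is absorbed into the test-function space, so there is no boundary contribution when we test with $v=u$. Because $\Gamma_D\neq\emptyset$, the norm on $V$ is $\|\cdot\|_V=\|\nabla\cdot\|_{L^2}$, and by the mixed-type Poincar\'e inequality this is a genuine norm and the infimum in \eqref{eq:eig-mix} is a strictly positive number $\lambda_1(\Omega,\Gamma_D)>0$.

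Next, I would substitute $v=u$ and invoke the pointwise hypothesis (the analogue of \eqref{f:lem1}) to bound the right-hand side, obtaining
\begin{align*}
\|u\|_V^2 \;\le\; \lambda\|u\|_{L^2(\Omega)}^2 + \sum_{i=1}^{n} a_i \|u\|_{L^{p_i+1}(\Omega)}^{p_i+1}.
\end{align*}
Applying the Poincar\'e-type inequality $\|u\|_{L^2}^2 \le \|u\|_V^2/\lambda_1(\Omega,\Gamma_D)$ to the first term, and rewriting $\|u\|_{L^{p_i+1}}^{p_i+1}=\|u\|_{L^{p_i+1}}^{p_i-1}\|u\|_{L^{p_i+1}}^2$ followed by the embedding bound \eqref{embedding-mix} applied to the last factor, I factor $\|u\|_V^2$ out of the right-hand side and arrive at the analogue of \eqref{ineq:prooflemma}. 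The modified smallness condition
\begin{align*}
\sum_{i=1}^{n} a_i C_{p_i+1}(\Omega,\Gamma_D)^2 \|u\|_{L^{p_i+1}}^{p_i-1} < 1-\frac{\lambda}{\lambda_1(\Omega,\Gamma_D)}
\end{align*}
then forces the coefficient in braces to be strictly less than $1$, which yields $\|u\|_V=0$ and hence $u\equiv 0$.

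The argument is a direct translation, so I do not expect any serious obstacle. The only substantive point worth flagging is the validity of the Poincar\'e inequality defining $\lambda_1(\Omega,\Gamma_D)$: it holds precisely because of the standing hypothesis $\Gamma_D\neq\emptyset$ (together with the mild regularity of $\Gamma_D$ implicit in the paper). This is exactly why the statement of the lemma excludes the pure Neumann case $\Gamma_D=\emptyset$: there the inner product on $V$ must be strengthened to include $(u,v)_{L^2}$, so the clean cancellation above fails and a genuinely different argument (or an additional a priori bound ruling out constant solutions) would be required.
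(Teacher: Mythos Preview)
Your proposal is correct and follows essentially the same approach as the paper: the paper's entire proof is the single sentence ``Inequality \eqref{ineq:prooflemma} holds for the notational replacements,'' and you have simply unpacked that line in detail, including the observation that $\Gamma_D\neq\emptyset$ is what makes the Poincar\'e-type inequality available.
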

\proof
Inequality \eqref{ineq:prooflemma} holds for the notational replacements.
\qed\\

The connected components of $ \mathring\Omega_0 $ are denoted by $\mathring\Omega_0^j$ ($ j=1,2,\cdots $), the number of which is assumed to be finite.
Note that $\partial \mathring\Omega_0^j \backslash \Gamma_N $ is not empty because $\partial \mathring\Omega_0^j \backslash \partial \Omega \neq \emptyset$ is ensured from $ \Omega_{0} \neq \Omega $.

Moreover, we recall our assumption: some numerical verification method succeeds in proving the existence of a solution $u \in V \cap L^{\infty}(\Omega)$ of the D-, N-, or M-problem of \eqref{main:fpro} in both balls \eqref{eq:h10ball} and \eqref{eq:linfball} in this ``extended'' setting.	
\begin{theo}\label{theo:mix}
	Let $ f $ satisfy \eqref{f:lem1} for some $ \lambda < \displaystyle \min_{j} \{\lambda_1(\mathring\Omega_0^j,\partial \mathring\Omega_0^j \backslash \Gamma_N)\} $.
	Let $ C_{p_{i}+1}=C_{p_{i}+1}(\Omega, \Gamma_D) $, $C_{p_{i}+1}^j=C_{p_{i}+1}(\mathring\Omega_0^j,\partial \mathring\Omega_0^j \backslash \Gamma_N)$, and $\lambda_1^j=\lambda_1(\mathring\Omega_0^j,\partial \mathring\Omega_0^j \backslash \Gamma_N)$.
	If we have
	\begin{align}
	\label{cond:theo2}
	\displaystyle \sum_{i=1}^{n}a_i (C_{p_{i}+1}^j)^2\left( \left\|\hat{u}\right\|_{L^{p_{i}+1}(\mathring\Omega_{0}^j)}+C_{p_{i}+1}\rho\right)^{p_{i}-1}<1 - \f{\lambda}{\lambda_1^j},
	\end{align}
	for each $ j $, then a solution $u \in V \cap L^{\infty}(\Omega)$ of the D-, N-, or M-problem of \eqref{main:fpro} existing in the intersection of balls \eqref{eq:h10ball} and \eqref{eq:linfball} satisfies \eqref{ineq:cc1} and \eqref{ineq:cc2}.
\end{theo}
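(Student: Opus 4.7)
The plan is to follow the proof of Theorem \ref{theo:main} almost verbatim, using Lemma \ref{lem:mix} in place of Lemma \ref{lem:positive} and carefully tracking which portion of the boundary of a candidate nodal domain inherits Dirichlet data and which inherits Neumann data. It again suffices to show that $u$ has no nodal domain inside $\Omega_0$. Fixing $j$ and arguing by contradiction, I would suppose that some connected component $\Omega' \subset \Omega_0^j$ is a nodal domain of $u$.

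The key observation is that $u$ vanishes on $\partial \Omega' \setminus \Gamma_N$: such points either lie on $\Gamma_D \cap \partial \Omega'$, where the original boundary condition forces $u=0$, or they lie in the interior of $\Omega$ on the nodal line of $u$, where $u=0$ by maximality of the nodal component. Hence $u|_{\Omega'} \in V(\Omega', \partial \Omega' \setminus \Gamma_N)$ and solves the weak form of \eqref{main:fpro} on $\Omega'$ with Dirichlet data on $\partial \Omega' \setminus \Gamma_N$ and the inherited Neumann condition on $\partial \Omega' \cap \Gamma_N$. Writing $u = \hat{u} + \rho \omega$ with $\|\omega\|_{V(\Omega,\Gamma_D)} \le 1$ and using the global embedding \eqref{embedding-mix} exactly as in \eqref{u:eval}, I recover
\[
\|u\|_{L^{p_i+1}(\Omega')} \le \|\hat{u}\|_{L^{p_i+1}(\Omega_0^j)} + C_{p_i+1}\rho.
\]

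The main obstacle is to establish the two monotonicity inequalities $C_{p_i+1}(\Omega',\partial\Omega'\setminus\Gamma_N) \le C_{p_i+1}^{\,j}$ and $\lambda_1(\Omega',\partial\Omega'\setminus\Gamma_N) \ge \lambda_1^{\,j}$, which are less transparent than in the pure-Dirichlet case because of the asymmetric Dirichlet/Neumann split. My approach would be the standard extension-by-zero embedding of $V(\Omega',\partial\Omega'\setminus\Gamma_N)$ into $V(\Omega_0^j,\partial\Omega_0^j\setminus\Gamma_N)$: any such $v$ extended by zero outside $\Omega'$ automatically has vanishing trace on $\partial\Omega_0^j \setminus \Gamma_N$, since that set is disjoint from $\Gamma_N$ and is contained in the closure of the zero-extended region whenever it is not already in $\partial\Omega'\setminus\Gamma_N$. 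Applying this embedding to the Rayleigh-quotient characterization \eqref{eq:eig-mix} yields the eigenvalue monotonicity, and applying it to the supremum characterization of the embedding constant yields the norm monotonicity. Combining these with \eqref{cond:theo2} and the $L^{p_i+1}$ bound above supplies the smallness hypothesis of Lemma \ref{lem:mix} for $u|_{\Omega'}$, so $u|_{\Omega'} \equiv 0$, contradicting that $\Omega'$ was a nodal domain and completing the proof.
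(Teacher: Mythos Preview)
Your proposal is correct and follows essentially the same route as the paper: show that no nodal domain can live inside any $\Omega_0^j$ by treating a hypothetical nodal component $\Omega'$ as a solution of the (mixed) problem on $\Omega'$ with Dirichlet part $\partial\Omega'\setminus\Gamma_N$, then use the zero-extension embedding $V(\Omega',\partial\Omega'\setminus\Gamma_N)\hookrightarrow V(\Omega_0^j,\partial\Omega_0^j\setminus\Gamma_N)$ to obtain the eigenvalue and embedding-constant monotonicities, and conclude via Lemma~\ref{lem:mix}. The only cosmetic difference is that the paper separates the cases $|\partial\Omega_0^j\cap\Gamma_N|=0$ and $\neq 0$ (invoking Lemma~\ref{lem:positive} in the first and Lemma~\ref{lem:mix} in the second), whereas you handle both at once; your unified treatment is slightly cleaner but otherwise identical in substance.
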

\proof
We prove the nonexistence of nodal domains in $ \mathring\Omega_0^j $ for every $ j $, as well as in the proof of Theorem \ref{theo:main}.
To achieve this, we consider the following two cases.

\subsubsection*{Case 1 --- when $ V(\mathring\Omega_0^j,\partial \mathring\Omega_0^j \backslash \Gamma_N)=H^1_0(\mathring\Omega_0^j) $}
In this case,
almost the same discussion as in the proof of Theorem \ref{theo:main} can be applied (see $ \Omega_0^3 $ in Fig.~\ref{fig:domain0}). 

Suppose that there exists a subdomain $ \Omega' \subset  \mathring\Omega_0^j$ such that $ u|_{\Omega'} \in H^1_0(\Omega')~(\subset H^1_0(\mathring\Omega_0^j)) $ is a solution of the D-problem \eqref{main:fpro} with the replacement $ \Omega \rightarrow \Omega' $.
We express $u\in V\,(=V(\Omega,\Gamma_D))$ as $ u=\hat{u}+ \rho\omega$, where $\omega\in V $ satisfies $\left\|\omega\right\|_{V}\leq 1$.
This ensures that, for $p\in (1,p^*)$,
\begin{align*}
\left\|u\right\|_{L^{p+1}(\Omega')}
\leq
\left\|\hat{u}\right\|_{L^{p+1}(\Omega')}+ C_{p+1}\rho	
\end{align*}
because $\left\|\omega \right\|_{L^{p+1}(\Omega')}\leq \left\|\omega \right\|_{L^{p+1}(\Omega)} \leq C_{p+1}\left\|\omega \right\|_{V}\leq C_{p+1}$, where $ C_{p+1}=C_{p+1}(\Omega,\Gamma_D) $.
It readily follows from $ \left\|\hat{u} \right\|_{L^{p+1}(\Omega')} \leq \left\|\hat{u} \right\|_{L^{p+1}(\mathring\Omega_0^j)} $ that
\begin{align}
\left\|u \right\|_{L^{p+1}(\Omega')}
\leq
\left\|\hat{u} \right\|_{L^{p+1}(\mathring\Omega_0^j)}+ C_{p+1}\rho.
\label{u:eval2}
\end{align}	
Therefore, \eqref{cond:theo2} and \eqref{u:eval2} ensure that

\begin{align*}
\displaystyle 
\sum_{i=1}^{n}a_i (C_{p_{i}+1}^j)^{2} \left( \left\|\hat{u}\right\|_{L^{p_{i}+1}(\mathring\Omega_{0}^j)}+C_{p_{i}+1}\rho \right)^{p_{i}-1}
&< 1 - \f{\lambda}{\lambda_1^j} \leq 1 - \f{\lambda}{\lambda_1(\Omega')},
\end{align*}
where $\lambda_1(\Omega')\geq \lambda_1^j$.
Because $C_{p_{i}+1}^j$ can be regarded as $ C_{p_{i}+1}(\Omega',\partial \Omega') $,
it follows from Lemma \ref{lem:positive} that $ u|_{\Omega'}\equiv 0 $.	

\subsubsection*{Case 2 --- when $ V(\mathring\Omega_0^j,\partial \mathring\Omega_0^j \backslash \Gamma_N) \neq H^1_0(\mathring\Omega_0^j) $}
The main difference from Theorem \ref{theo:main} is the possibility of this case (see $ \Omega_0^1 $ or $ \Omega_0^2 $ in Fig.~\ref{fig:domain0}).
Let $ \Omega' $ be an arbitrary subdomain of $ \mathring\Omega_0^j $.
To reach the desired fact (there exists no nodal domain of $ u $ inside $ \mathring\Omega^j_{0} $), it is necessary to prove that $ u|_{\Omega'} $ vanishes if it can be considered as a solution of  the D- or M-problem of \eqref{main:fpro} with the notational replacements $ \Omega \rightarrow \Omega' $, $ \Gamma_D \rightarrow \Gamma'_D $, and $ \Gamma_N \rightarrow \Gamma'_N $, where $ \Gamma'_N =\partial\Omega' \cap \Gamma_N$ (allowed to be empty) and $ \Gamma'_D=\partial \Omega' \backslash  \overline{\Gamma'_N}$.
When $ V(\Omega',\Gamma'_D)=H^1_0(\Omega') $, $ u $ can be considered as a solution of the D-problem on $ \Omega' $; therefore, the same argument as that in Case 1 is true.

We are left to consider the case in which $ u|_{\Omega'} $ is a solution of the M-problem where $ V(\Omega',\Gamma'_D) \neq H^1_0(\Omega') $.
Considering the zero extension outside $ \Omega' $ to $ \mathring\Omega_0^j $, the restriction $ u|_{\Omega'} $ can be regarded as a function in $ V(\mathring\Omega^j_0,\partial \mathring\Omega_0^j \backslash \Gamma_N) $;
note that $ u|_{\partial \Omega'} $ can be nonzero only on a subset of $ \Gamma_N $ (again, see $ \Omega_0^1 $ or $ \partial \Omega_0^2 $ in Fig.\ref{fig:domain0}).
Therefore, it follows that $\lambda_1(\Omega',\Gamma'_D)\geq \lambda_1(\mathring\Omega_0^j,\partial \mathring\Omega_0^j \backslash \Gamma_N)$ and 
$ C_{p+1}(\mathring\Omega_0^j,\partial  \mathring\Omega_0^j \backslash \Gamma_N) $
can be used as
$ C_{p+1}(\Omega',\Gamma'_D)$ for $p \in (1,p^*)$.
Thus, we make the same argument as that in Case 1 combined with Lemma \ref{lem:mix}.
\qed

\begin{rem}
	Section \ref{sec:constants} discusses explicit estimations for a lower bound of $ \lambda_1^j $ and upper bounds of $ C_{p_{i}+1}$ and $ C_{p_{i}+1}^j$.
\end{rem}

\begin{rem}\label{rem:th31weekend}
	We have assumed that $\Gamma_D$ and $\Gamma_N$ are connected sets to avoid redundant discussion.
	However, Theorem {\rm \ref{theo:mix}} remains true for many other cases, such as when $\Omega = (0,1)^2$ and $\Gamma_D=\{(x,y)\in \mathbb{R}^2 : y=0,~ 0<x<1\} \cup \{(x,y)\in \mathbb{R}^2 : y=1,~ 0<x<1\}$.
	Note that, in this case, a solution of \eqref{eq:poisson} with the mixed boundary condition has $H^2$-regularity for $h \in L^2(\Omega)$ (see \cite[Subsection 5.3]{azegami2020boundary}).
\end{rem}

\begin{figure}[h]
	\begin{center}
		\includegraphics[height=70 mm]{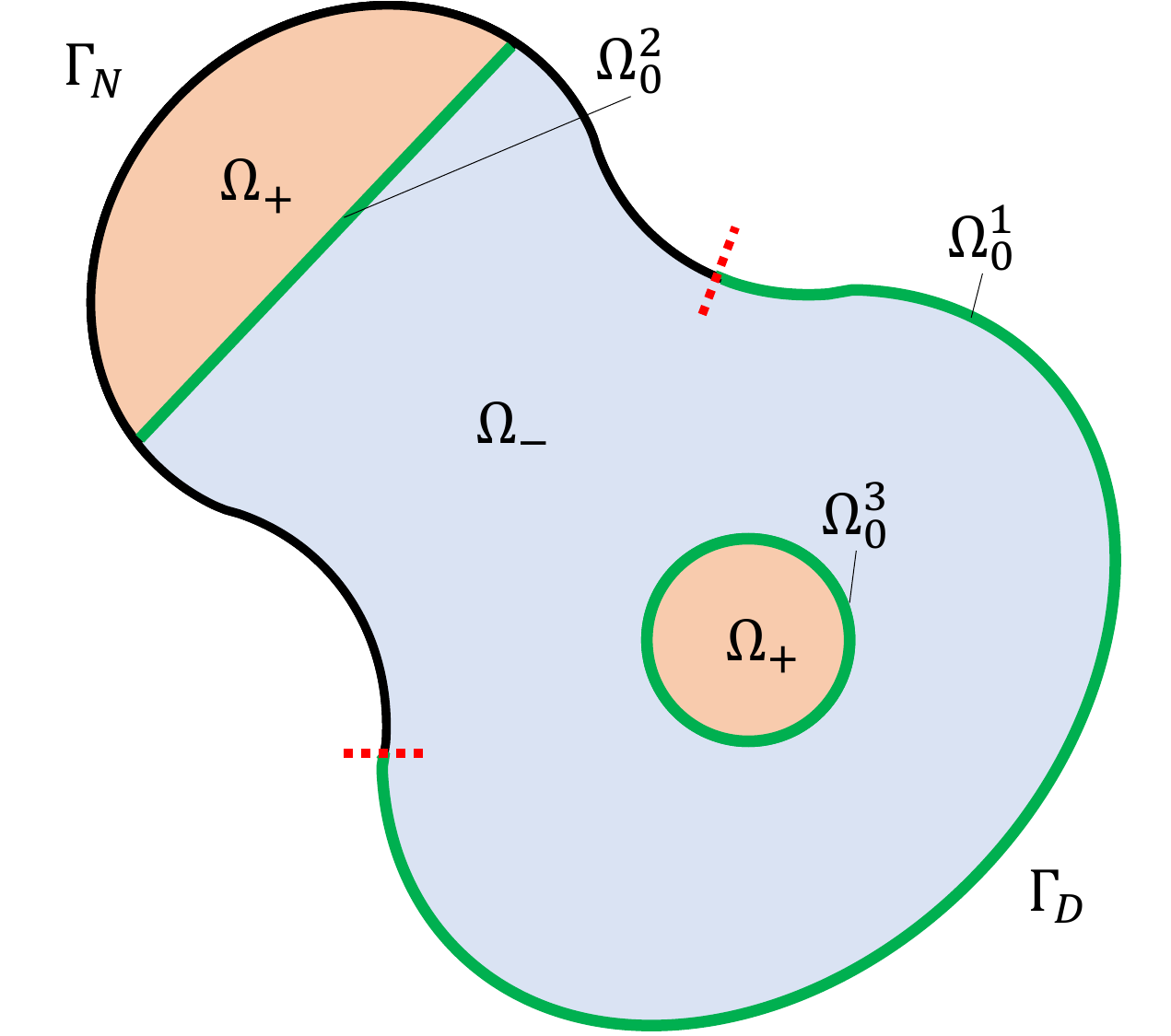}
	\end{center}
	\caption{Conceptual image of domains $\Omega$, $\Omega_{+}$, $\Omega_{-}$, and $\Omega_{0}$.
		The upper side (black line) is imposed on the Neumann boundary condition.
		The lower side (green line) is imposed on the Dirichlet boundary condition.
		The green lines satisfy $ \Omega_0 = \Omega_0^1 \cup \Omega_0^2 \cup \Omega_0^3$, which includes the part of $ \overline{\Omega} $ where $ |\hat{u}|\leq \sigma $ .
		These green lines are expected to topologically approximate the nodal lines of $ u $ in the sense that no nodal domain exists inside them.
		Note that $ \partial \Omega_0^3 $ consists of Dirichlet boundaries in their entirety, whereas some parts of $ \partial \Omega_0^1 $ and $ \partial \Omega_0^2 $ are Neumann boundaries located on their ``ends''.
	}
	\label{fig:domain0}
\end{figure}
\section{Conclusion}
We proposed a rigorous numerical method for analyzing the sign-change structure of solutions of the semilinear elliptic equation \eqref{eq:main}.
Given two types of error estimates $\left\|u-\hat{u}\right\|_{H^1_0}$ and $\left\|u-\hat{u}\right\|_{L^{\infty}}$ between an exact solution $ u $ and a numerically computed approximate solution $ \hat{u} $,
we provided a method for estimating the number of nodal domains (see Theorems \ref{theo:main} and \ref{theo:mix}).
The location of the nodal line of $ u $ can be determined via the information of $ \hat{u} $ and a verified $ L^{\infty} $-error $ \sigma $.
Our method was used to analyze the sign-change structure of the Allen--Cahn equation \eqref{eq:allen} subject to the homogeneous Dirichlet boundary condition.
In Section \ref{sec:othreboundary}, our method was extended to Neumann and mixed boundary conditions (see Theorem \ref{theo:mix}).
\appendix
\renewcommand{\thetheo}{\Alph{section}.\arabic{theo}}
\section{Required constants --- eigenvalues and embedding constants}\label{sec:constants}
In this section, we discuss evaluating the minimal eigenvalue $ \lambda_{1}(\mathring\Omega_0) $ and embedding constants $ C_{p+1} $ required in Theorems \ref{theo:main} and \ref{theo:mix}.

The following theorem can be used to obtain an explicit lower bound for the $ k $-th eigenvalue $ \lambda_{k}(\Omega) $ of the Laplacian imposed on the homogeneous Dirichlet boundary condition for a bounded domain $ \Omega $.
\begin{theo}[\cite{li1983schrodinger}] \label{theo:lower_eigen}
	Let $ \Omega \subset \mathbb{R}^{N}$~$(N=1,2,3,\cdots)$ be a bounded domain.
	We have
	\begin{align}
	\lambda_{k}(\Omega) \geq \frac{4\pi^2 N}{N+2} \left(\frac{k}{B_N|\Omega|}\right)^{\frac{2}{N}}, 
	\end{align}
	where $ |\Omega| $ and $ B_N $ denote the volume of $ \Omega $ and the unit $ N $-ball, respectively.
\end{theo}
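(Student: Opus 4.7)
The plan is to follow the classical Li--Yau strategy, which proves the stronger averaged bound
\[
\sum_{j=1}^{k} \lambda_j(\Omega) \;\geq\; \frac{4\pi^2 N k}{N+2}\left(\frac{k}{B_N|\Omega|}\right)^{\!2/N},
\]
and then deduce the stated inequality from $\lambda_k(\Omega) \geq \frac{1}{k}\sum_{j=1}^k \lambda_j(\Omega)$, which is immediate because $\lambda_1\leq\cdots\leq\lambda_k$.

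First I would fix an $L^2(\Omega)$-orthonormal system of Dirichlet eigenfunctions $\{u_j\}_{j=1}^\infty$ with eigenvalues $\lambda_j$, extend each $u_j$ by zero to all of $\mathbb{R}^N$, and pass to the Fourier transform $\hat{u}_j$. Plancherel gives $\|\hat{u}_j\|_{L^2(\mathbb{R}^N)}=1$ and $\int_{\mathbb{R}^N}|\xi|^2|\hat{u}_j(\xi)|^2\,d\xi=\|\nabla u_j\|_{L^2(\Omega)}^2=\lambda_j$ (where the gradient is in $H^1_0$, justified by the zero extension). Setting
\[
F(\xi):=\sum_{j=1}^{k}|\hat{u}_j(\xi)|^2,
\]
orthonormality gives $\int_{\mathbb{R}^N} F(\xi)\,d\xi = k$ and $\int_{\mathbb{R}^N}|\xi|^2 F(\xi)\,d\xi = \sum_{j=1}^k \lambda_j$.

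The crucial step is the pointwise bound $F(\xi)\leq |\Omega|/(2\pi)^N$ for all $\xi\in\mathbb{R}^N$. This follows from Bessel's inequality applied to the $L^2(\Omega)$-orthonormal family $\{u_j\}_{j=1}^k$ tested against the plane wave $x\mapsto (2\pi)^{-N/2}e^{-i\xi\cdot x}\cdot \mathbf{1}_\Omega$, whose $L^2(\Omega)$-norm squared is $|\Omega|/(2\pi)^N$. Once this is in hand, I would minimize $\int |\xi|^2 F\,d\xi$ over the class of measurable $F$ with $0\leq F\leq |\Omega|/(2\pi)^N$ and $\int F=k$. The bathtub/rearrangement principle shows that the minimizer is the indicator $\frac{|\Omega|}{(2\pi)^N}\mathbf{1}_{\{|\xi|\leq R\}}$, with radius $R$ chosen so that $\frac{|\Omega|}{(2\pi)^N}B_N R^N = k$, i.e.\ $R = 2\pi\bigl(k/(B_N|\Omega|)\bigr)^{1/N}$.

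A direct integration of $|\xi|^2$ over the ball of radius $R$ against the constant density then yields $\sum_{j=1}^k \lambda_j \geq \frac{Nk}{N+2}R^2$, which after substituting $R$ is exactly the averaged inequality above. Dividing by $k$ produces the stated bound. The main technical obstacle is justifying the Bessel-inequality bound on $F$ cleanly (including the $(2\pi)^N$ normalization of the Fourier transform) and the bathtub minimization on $\mathbb{R}^N$; everything else is bookkeeping. Since this theorem is quoted from~\cite{li1983schrodinger}, I would ultimately only need to cite Li--Yau's original argument, but the sketch above is the route I would take to verify it independently.
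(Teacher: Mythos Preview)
Your sketch is a correct outline of the Li--Yau argument. The paper itself does not prove this theorem at all; it simply quotes it from \cite{li1983schrodinger} and immediately specializes it to $N=2,3$ in the subsequent corollary, so there is nothing further to compare.
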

Adapting Theorem \ref{theo:lower_eigen} to the case in which $ N=2,3 $, we have the following estimations for the first eigenvalue.
\begin{cor}\label{coro:lower_eigen}
	Under the same assumption of Theorem {\rm \ref{theo:lower_eigen}}, we have
	\begin{align*}
	&\lambda_{1}(\Omega) \geq 2 \pi |\Omega|^{-1},&N=2,\\
	&\lambda_{1}(\Omega) \geq \frac{3\times 6^{\frac{2}{3}}}{5} \pi^{\frac{4}{3}} |\Omega|^{-\frac{2}{3}},&N=3.
	\end{align*}	
\end{cor}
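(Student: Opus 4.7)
The plan is straightforward: the corollary is obtained by specializing Theorem \ref{theo:lower_eigen} to $k=1$ with $N=2$ and $N=3$, substituting the explicit volumes of the unit disk and unit ball, and simplifying the resulting constant. No additional analytic input is required — the entire argument is an algebraic reduction of the stated bound
\[
\lambda_{1}(\Omega) \geq \frac{4\pi^2 N}{N+2}\left(\frac{1}{B_N|\Omega|}\right)^{\frac{2}{N}}.
\]

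For $N=2$, I would use $B_2=\pi$, so the prefactor $\frac{4\pi^2 \cdot 2}{2+2}=2\pi^2$ combines with $(\pi|\Omega|)^{-1}$ to give exactly $2\pi|\Omega|^{-1}$. For $N=3$, I would use $B_3=\frac{4\pi}{3}$, giving the prefactor $\frac{4\pi^2\cdot 3}{3+2}=\frac{12\pi^2}{5}$ multiplied by $\bigl(\frac{3}{4\pi}\bigr)^{2/3}|\Omega|^{-2/3}$. The only nontrivial step is to verify the constant simplification
\[
\frac{12\pi^2}{5}\left(\frac{3}{4\pi}\right)^{\!2/3}
= \frac{12\cdot 3^{2/3}}{5\cdot 4^{2/3}}\,\pi^{4/3}
= \frac{3\cdot 6^{2/3}}{5}\,\pi^{4/3},
\]
where the last equality uses $12/4^{2/3}=12\cdot 2^{-4/3}=3\cdot 2^{2/3}$ and then $2^{2/3}\cdot 3^{2/3}=6^{2/3}$.

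Since no obstacle arises beyond this bookkeeping of exponents, I would present the proof as a direct one-line substitution for each dimension, with a brief inline computation isolating the simplification of the $N=3$ constant. Once those two numerical reductions are carried out, the stated inequalities follow immediately from Theorem \ref{theo:lower_eigen}, completing the proof.
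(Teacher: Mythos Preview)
Your proposal is correct and matches the paper's approach exactly: the paper does not give a separate proof but simply states the corollary as a direct adaptation of Theorem~\ref{theo:lower_eigen} to $N=2,3$ with $k=1$, which is precisely the substitution-and-simplification you carry out. The algebraic reductions you give for both cases are accurate.
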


Theorem {\rm \ref{theo:lower_eigen}} $ ( $or Corollary {\rm \ref{coro:lower_eigen}}$ ) $ is reasonable for obtaining rough lower bounds for $ \lambda_1(\mathring\Omega_{0})~(=\lambda_1(\mathring\Omega_{0},\partial\mathring\Omega_{0})) $.
The Temple–Lehmann–Goerisch method can be helpful for us to obtain a more accurate evaluation of $\lambda_1(\mathring\Omega_{0})$
if more accuracy is required to satisfy the inequalities assumed in Theorems \ref{theo:main} and \ref{theo:mix} (see, for example, \cite[Theorem 10.31]{nakaoplumwatanabe2019numerical}).
Another possible approach  is
Liu's method provided in {\rm \cite{liu2013verified,liu2015framework}},
which is based on the finite element method and can be applied also to estimate the eigenvalues $ \lambda_1(\mathring\Omega_0^j,\partial \mathring\Omega_0^j \backslash \Gamma_N) $ (corresponding to a mixed boundary condition) required in Theorem \ref{theo:mix}.

Upper bounds for $ C_{p}(\Omega)\,(= C_{p}(\Omega, \partial\Omega))$ can be estimated via {\rm \cite[Corollary A.2]{tanaka2017sharp}} or \cite[Lemma 2]{plum2008} , which are used in the numerical examples in Subsection \ref{subsec:ex}.
Before introducing them, we cite the following famous result for the best constant in the classical Sobolev inequality.
Hereafter, the range of $ p $ is shifted by 1 (in place of $ p+1 $) to fit the original notation.
\begin{theo}[\cite{aubin1976} and \cite{talenti1976}]\label{theo:talenti}
	Let $u$ be any function in $W^{1,q}\left(\mathbb{R}^{N}\right)\ (N=2,3,\cdots)$.
	Moreover, let $q$ be any real number such that $1<q<N$, and let $p=Nq/\left(N-q\right)$.
	Then,
	\begin{align*}
	\left(\int_{\mathbb{R}^{N}}\left|u(x)\right|^{p}dx\right)^{\frac{1}{p}}\leq T_{p}\left(\int_{\mathbb{R}^{N}}\left|\nabla u(x)\right|_{2}^{q}dx\right)^{\frac{1}{q}}
	\end{align*}
	holds for
	\begin{align}
	T_{p}=\pi^{-\frac{1}{2}}N^{-\frac{1}{q}}\left(\frac{q-1}{N-q}\right)^{1-\frac{1}{q}}\left\{\frac{\Gamma\left(1+\frac{N}{2}\right)\Gamma\left(N\right)}{\Gamma\left(\frac{N}{q}\right)\Gamma\left(1+N-\frac{N}{q}\right)}\right\}^{\frac{1}{N}}\label{talenticonst},
	\end{align}
	where
	$\left|\nabla u\right|_{2}=\left((\partial u/\partial x_{1})^{2}+(\partial u/\partial x_{2})^{2}+\cdots+(\partial u/\partial x_{N})^{2}\right)^{1/2}$,
	and $\Gamma$ denotes the gamma function.
\end{theo}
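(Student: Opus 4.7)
The approach I would take is the classical symmetrization-plus-ODE argument due to Talenti. Write $p = Nq/(N-q)$. The first step is to reduce to radial, non-increasing, nonnegative functions: given $u \in W^{1,q}(\mathbb{R}^N)$, let $u^*$ denote its Schwarz symmetrization. Equimeasurability gives $\|u^*\|_{L^p} = \||u|\|_{L^p}$, while the Pólya--Szegő inequality yields $\|\nabla u^*\|_{L^q} \leq \|\nabla u\|_{L^q}$. Hence the sharp constant is the supremum of the ratio $\|u\|_{L^p}/\|\nabla u\|_{L^q}$ over nonnegative, radial, non-increasing $u$, and it suffices to construct a maximizer in this restricted class.

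Next, I would formulate the variational problem: minimize $J(u) := \|\nabla u\|_{L^q}^q$ subject to $\|u\|_{L^p} = 1$. The functional is invariant under the dilations $u(x) \mapsto \alpha u(\beta x)$ for appropriate $\alpha, \beta$, so after fixing a normalization (for instance, also fixing $u(0)$) the existence of a minimizer follows from direct methods applied in the radial class, where compact embedding of radial $W^{1,q}$ functions can be exploited. Writing down the Euler--Lagrange equation, any extremal satisfies the $q$-Laplace equation $-\Delta_q u = \mu \, u^{p-1}$ for some Lagrange multiplier $\mu > 0$, which after a further rescaling may be normalized to $\mu = 1$.

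The third step is to solve the resulting radial ODE explicitly. Setting $u(x) = v(r)$ with $r = |x|$, the equation becomes
\begin{equation*}
-\bigl(r^{N-1} |v'(r)|^{q-2} v'(r)\bigr)' = r^{N-1} v(r)^{p-1}.
\end{equation*}
A direct substitution verifies that the Aubin--Talenti profile $v(r) = \bigl(1 + \gamma r^{q/(q-1)}\bigr)^{-(N-q)/q}$ solves this ODE for a specific $\gamma = \gamma(N,q) > 0$ that I would pin down by comparing leading coefficients. Uniqueness of positive, decaying radial solutions, established by an Emden--Fowler-type transformation together with an ODE shooting/phase-plane argument, then shows that every extremal has this shape up to scaling and translation.

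Finally, I would evaluate $J$ at this extremal. After changing variables $s = \gamma r^{q/(q-1)}$, both $\int_0^\infty r^{N-1} v(r)^p \, dr$ and $\int_0^\infty r^{N-1} |v'(r)|^q \, dr$ collapse to Beta integrals of the form $\int_0^\infty s^{a-1}(1+s)^{-b}\, ds = B(a, b-a) = \Gamma(a)\Gamma(b-a)/\Gamma(b)$. Multiplying by the sphere area $|S^{N-1}| = 2\pi^{N/2}/\Gamma(N/2)$, taking the $q$-th root, and simplifying using the identity $\Gamma(1 + N/2) = (N/2)\Gamma(N/2)$ yields the closed form in \eqref{talenticonst}. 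The main obstacle is this last step: the Gamma-function bookkeeping needed to massage the ratios into precisely the stated expression for $T_p$ is the most error-prone part, whereas the conceptual ingredients (symmetrization, explicit ODE solution, and uniqueness) are by now standard.
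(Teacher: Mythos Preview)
The paper does not prove this theorem; it is quoted as a classical result from \cite{aubin1976} and \cite{talenti1976} and used only as a black box to feed into Corollary~\ref{coro:roughembedding}. Your sketch is the standard Talenti argument (symmetrization via P\'olya--Szeg\H{o}, solving the radial $q$-Laplace ODE, and evaluating Beta integrals), and as an outline it is correct, so there is nothing to compare against here.
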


The following corollary obtained from Theorem \ref{theo:talenti} provides a simple bound for the embedding constant from $H_{0}^{1}\left(\Omega\right)$ to $L^{p}(\Omega)$ for a bounded domain $\Omega$, where $H_{0}^{1}(\Omega)$ is endowed with the usual norm $ \|\nabla \cdot \|_{L^2} $.
Recall that this can be used as an upper bound for the embedding constant with the generalized norm \eqref{tau_norm}.
\begin{cor}[{\rm \cite[Corollary A.2]{tanaka2017sharp}} ]\label{coro:roughembedding}
	Let $\Omega\subset \mathbb{R}^{N} (N=2,3,\cdots)$ be a bounded domain.
	Let $p$ be a real number such that $p\in(N/(N-1),2N/(N-2)]$ if $N\geq 3$ and $p\in(2,\infty)$ if $N=2$.
	Additionally, set $q=Np/(N+p)$.
	Then, $(\ref{embedding})$ holds for
	\begin{align*}
	C_{p}\left(\Omega\right)=\left|\Omega\right|^{\frac{2-q}{2q}}T_{p},
	\end{align*}
	where $T_{p}$ is the constant in {\rm (\ref{talenticonst})}.
\end{cor}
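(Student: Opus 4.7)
The plan is to derive the stated constant by reducing the bounded-domain embedding to Talenti's sharp Sobolev inequality on $\mathbb{R}^N$ via zero extension, and then correcting the Lebesgue exponent of the gradient using H\"older's inequality on $\Omega$.

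First, given $u \in H^1_0(\Omega)$, I would take the zero extension $\tilde{u}$ of $u$ to all of $\mathbb{R}^N$. Because $u$ has trace zero on $\partial\Omega$, the standard result ensures $\tilde{u} \in H^1(\mathbb{R}^N)$ with $\|\tilde{u}\|_{L^r(\mathbb{R}^N)} = \|u\|_{L^r(\Omega)}$ and $\|\nabla \tilde{u}\|_{L^s(\mathbb{R}^N)} = \|\nabla u\|_{L^s(\Omega)}$ for any admissible $r,s$. Next, with $q = Np/(N+p)$, I would verify the compatibility conditions of Theorem \ref{theo:talenti}: the hypothesis on $p$ (namely $p \in (N/(N-1), 2N/(N-2)]$ when $N \geq 3$ and $p \in (2,\infty)$ when $N=2$) is exactly what forces $q \in (1,N)$ with $q \leq 2$, and by construction $p = Nq/(N-q)$. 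Since, in particular, $q \leq 2$, the extension $\tilde{u}$ lies in $W^{1,q}(\mathbb{R}^N)$ (H\"older on the support gives $\|\nabla \tilde u\|_{L^q} < \infty$).

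Second, I would apply Theorem \ref{theo:talenti} to $\tilde{u}$ to obtain
\begin{align*}
\|u\|_{L^p(\Omega)} = \|\tilde{u}\|_{L^p(\mathbb{R}^N)} \leq T_p \, \|\nabla \tilde{u}\|_{L^q(\mathbb{R}^N)} = T_p \, \|\nabla u\|_{L^q(\Omega)},
\end{align*}
with $T_p$ exactly the constant in \eqref{talenticonst}. Then I would use H\"older's inequality on $\Omega$ with the conjugate exponents $2/q$ and $2/(2-q)$ (valid since $q \in (1,2]$) to get
\begin{align*}
\|\nabla u\|_{L^q(\Omega)} = \left(\int_\Omega |\nabla u|_2^q\, dx\right)^{1/q} \leq |\Omega|^{(2-q)/(2q)} \, \|\nabla u\|_{L^2(\Omega)}.
\end{align*}
Chaining these two inequalities yields
\begin{align*}
\|u\|_{L^p(\Omega)} \leq |\Omega|^{(2-q)/(2q)} T_p \, \|u\|_{H^1_0(\Omega)},
\end{align*}
which is the claimed bound.

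The argument is essentially a two-line combination once the hypotheses are in place, so there is no serious analytical obstacle; the main care needed is bookkeeping. The one point that deserves attention is checking the admissibility of $q$: one must confirm that the stated range for $p$ translates to $q = Np/(N+p) \in (1, \min\{2,N\}]$ so that both Talenti's theorem and the H\"older step are legitimate, and that the zero extension preserves weak differentiability only because $u$ vanishes on $\partial\Omega$ in the trace sense (this is why the conclusion is for $H^1_0$ and not $H^1$). Apart from this, the computation is mechanical and the sharp constant $T_p$ is simply inherited from Theorem \ref{theo:talenti}.
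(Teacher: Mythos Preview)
Your proposal is correct and is precisely the standard derivation: zero extension to $\mathbb{R}^N$, application of Talenti's sharp inequality (Theorem~\ref{theo:talenti}) with the conjugate exponent $q=Np/(N+p)$, and a H\"older correction on the bounded support to pass from $\|\nabla u\|_{L^q}$ to $\|\nabla u\|_{L^2}$. The paper itself does not supply a proof but simply cites the result from \cite[Corollary~A.2]{tanaka2017sharp}; your argument is exactly the intended one and your bookkeeping on the admissible range of $q$ is accurate.
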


The following theorem estimates the embedding constants, where $H_{0}^{1}(\Omega)$ is endowed with the generalized norm \eqref{tau_norm}.
This theorem is applicable to unbounded domains.
\begin{theo}[{\cite[Lemma 7.10]{nakaoplumwatanabe2019numerical}} ]
\label{theo:plum_embedding}
	Let $\Omega\subset \mathbb{R}^{N} (N=2,3,\cdots)$ be a bounded or unbounded domain.
	Let $\lambda_{1} \in[0,\infty)$ denote the minimal point of the spectrum of $-\Delta$ on $H_{0}^{1}(\Omega)$ endowed with the inner product \eqref{tau_norm},
	where $\tau$ is selected so that $\tau>0$ when $\lambda_{1}=0$.
	For $s \in [0,1]$, we define
	\begin{align*}
	\displaystyle
	\gamma_s :=
	\left\{\begin{array}{l l}
	\frac{s^s(1-s)^{1-s}}{\tau ^s} & \text{~if~} s \lambda_{1} \leq (1-s)\tau,\\[5pt]
	\frac{\lambda_1 ^{1-s}}{\lambda_1 + \tau} & \text{~otherwise},
	\end{array}\right.
	\end{align*}
	where $0^0:=1$.
	
	\noindent$a)$~~Let $N=2$ and $p\in[2,\infty).$
	With the largest integer $\nu$ satisfying $\nu\leq p/2, \ (\ref{embedding})$ holds for
	\begin{align*}
	C_{p}=\left(\frac{1}{2}\right)^{\frac{1}{2}+\frac{2\nu-3}{p}}\left[\frac{p}{2}\left(\frac{p}{2}-1\right)\cdots\left(\frac{p}{2}-\nu+2\right)\right]^{\frac{2}{p}} \sqrt{\gamma_{\frac{2}{p}}},
	\end{align*}
	where $\displaystyle \frac{p}{2}\left(\frac{p}{2}-1\right)\cdots\left(\frac{p}{2}-\nu+2\right)=1$ if $\nu=1.$\\[1pt]
	$b)$~~Let $N\geq 3$ and $p\in[2,2N/(N-2)]$.
	With $s:=N(p^{-1}-2^{-1}+N^{-1})\in[0,1],\ (\ref{embedding})$ holds for
	\begin{align*}
	C_{p}=\left(\frac{1}{\sqrt{N(N-2) \pi}}\left[\frac{\Gamma(N)}{\Gamma\left(\frac{N}{2}\right)}\right]^{\frac{1}{N}}\right)^{1-s} \sqrt{\gamma_{s}}.
	\end{align*}
\end{theo}

Although Corollary \ref{coro:roughembedding} and Theorem \ref{theo:plum_embedding} are reasonable for evaluating embedding constants under the homogeneous Dirichlet boundary conditions, Theorem \ref{theo:mix} requires upper bounds for more general constants $ C_{p_{i}+1}(\Omega,\partial \Omega \backslash \Gamma_N) $ and $ C_{p_{i}+1}(\mathring\Omega_0^j,\partial \mathring\Omega_0^j \backslash \Gamma_N) $.
Generally, directly evaluating the best values of these embedding constants is not easy.
Instead of a direct estimation, we can use the bound for embedding $ H^1(\Omega) \hookrightarrow L^{p+1}(\Omega) $ as an upper bound.
Such an upper bound is provided, for example, in {\rm \cite{tanaka2015estimation,mizuguchi2017estimation}} although estimations derived using these methods are rather larger than those in the homogeneous Dirichlet case.
Therefore, for Case 2 in the proof of Theorem \ref{theo:mix}, inequality \eqref{cond:theo2} is less likely to hold than for Case 1.
In the following, we introduce \cite[Theorems 2.1 and 3.3]{mizuguchi2017estimation} , which provide reasonable estimates for the embedding constant.
These can be applied to a domain $\Omega$ that can be divided into a finite number of bounded convex domains $\Omega_i~(i=1,2,3,\cdots, n)$ such that 
\begin{align}\label{omega1}
	\overline{\Omega}=\displaystyle\bigcup_{1\leq i\leq n}\overline{\Omega_i}
	\text{~~and~~}
	\Omega_i\cap\Omega_j=\emptyset~(i\neq j).
\end{align}
\begin{theo}\label{theo:mizuguchiDp}
	Let $\Omega\subset\mathbb{R}^N$ be a bounded convex domain.
	Moreover, let $ d_{\Omega}:=\sup_{x,y \in \Omega}|x-y| $, $\Omega_{x}:=\{x-y\,:\,y\in\Omega\}$ for $x\in\Omega$, and $U:=\cup_{x\in\Omega}\Omega_x$.
	Suppose that $1\leq q\leq p<qN/(N-q)$ if $N>q$, and $1\leq q\leq p<\infty$ if $N=q$. 
	Then, we have
	\begin{align}\label{result0}
		\|u-u_{\Omega}\|_{L^p(\Omega)}\leq D_p(\Omega)\|\nabla u\|_{L^q(\Omega)}~~~\text{for all}~u\in W^{1,q}(\Omega) 
	\end{align}
	with
	\begin{align*}
		D_p(\Omega)=\frac{d_\Omega^{N}}{N|\Omega|}(A_{r}A_{q}A_{p'})^N \||x|^{1-N}\|_{L^{r}(U)},
	\end{align*}
	where $u_{\Omega}=|\Omega|^{-1} \int_{\Omega} u(x) dx$, $r=qp/((q-1)p+q)$, and
	\begin{align*}
		A_m=
		\begin{cases}
		\sqrt{m^{\frac{2}{m}-1}(m-1)^{1-\frac{1}{m}}}~~(1<m<\infty),\\[2mm]
		\hspace{1.5cm}1\hspace{1.8cm}(m=1,~\infty).
		\end{cases}
	\end{align*} 
\end{theo}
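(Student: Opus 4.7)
The plan is to combine a classical integral representation for $u - u_\Omega$ on a convex domain with the sharp (Babenko--Beckner--Brascamp--Lieb) form of Young's convolution inequality, so as to produce precisely the three factors $A_r$, $A_q$, $A_{p'}$ in $D_p(\Omega)$. By density it suffices to prove the inequality for $u \in C^1(\overline{\Omega})$.

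\textbf{Step 1 (pointwise Riesz-potential bound).} Convexity of $\Omega$ guarantees that the segment from any $x$ to any $y$ in $\Omega$ stays inside $\Omega$, so with $\omega = (y-x)/|y-x|$ one has $u(x) - u(y) = -\int_0^{|y-x|} \nabla u(x + s \omega) \cdot \omega \, ds$. Averaging over $y \in \Omega$, switching to polar coordinates $y = x + r \omega$ with $r \in [0, R(\omega)]$ and $\omega \in S^{N-1}$ (where $R(\omega)$ is the exit distance along $\omega$), and exchanging the order of the $r$- and $s$-integrations yields $|u(x) - u_\Omega| \le \frac{1}{|\Omega|} \int_{S^{N-1}} \int_0^{R(\omega)} |\nabla u(x + s \omega)| \, \frac{R(\omega)^N - s^N}{N} \, ds \, d\omega$. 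Bounding the inner factor by $d_\Omega^N / N$ and undoing the polar change of variables gives the Morrey-type estimate $|u(x) - u_\Omega| \le \frac{d_\Omega^N}{N |\Omega|} \int_\Omega \frac{|\nabla u(y)|}{|x-y|^{N-1}} \, dy$.

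\textbf{Step 2 (convolution form and sharp Young).} Set $K(z) := |z|^{1-N} \mathbf{1}_U(z)$ and let $g$ denote the extension of $|\nabla u|$ by zero outside $\Omega$. Since $x - y \in U$ whenever $x, y \in \Omega$, Step 1 becomes $|u(x) - u_\Omega| \le \frac{d_\Omega^N}{N |\Omega|} (K \ast g)(x)$ for $x \in \Omega$. Taking $L^p$ norms and applying the sharp Young inequality with exponents satisfying $1/r + 1/q = 1 + 1/p$ yields $\| K \ast g \|_{L^p(\mathbb{R}^N)} \le (A_r A_q A_{p'})^N \| K \|_{L^r(\mathbb{R}^N)} \| g \|_{L^q(\mathbb{R}^N)}$, where $A_m = \bigl( m^{1/m} / (m')^{1/m'} \bigr)^{1/2}$ is the Babenko--Beckner constant; a short algebraic manipulation using $m' = m/(m-1)$ shows $A_m^2 = m^{2/m - 1} (m-1)^{1 - 1/m}$, matching the definition in the statement. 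Solving $1/r = 1 + 1/p - 1/q$ gives $r = qp/((q-1)p + q)$. Restricting the $L^p$ norm to $\Omega$ and using $\| K \|_{L^r(\mathbb{R}^N)} = \| \, |x|^{1-N} \|_{L^r(U)}$ together with $\| g \|_{L^q(\mathbb{R}^N)} = \| \nabla u \|_{L^q(\Omega)}$ then produces the claimed inequality with exactly the stated constant $D_p(\Omega)$. The hypotheses $p < qN/(N-q)$ (or $p < \infty$ when $N = q$) are precisely what ensure $r \ge 1$ and $|x|^{1-N} \in L^r(U)$, so that Young's inequality applies.

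\textbf{Main obstacle.} Step 1 is a standard convex-domain argument whose constant $d_\Omega^N/(N|\Omega|)$ is tight and easy to verify. The real technical input is the sharp form of Young's inequality: one must invoke the Babenko--Beckner (or Brascamp--Lieb) theorem rather than the elementary Young inequality, since the latter would replace $(A_r A_q A_{p'})^N$ by $1$ and weaken the estimate. Verifying the algebraic identity relating the two expressions for $A_m$, and checking that the Young exponent indeed simplifies to $qp/((q-1)p + q)$, are the only computations needed beyond quoting that sharp inequality.
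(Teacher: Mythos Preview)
The paper does not prove this statement: it is quoted verbatim from \cite{mizuguchi2017estimation} (announced just before as ``we introduce \cite[Theorems 2.1 and 3.3]{mizuguchi2017estimation}''), so there is no in-paper proof to compare against.

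That said, your argument is correct and is, in all likelihood, exactly the proof given in the cited reference. The shape of the constant $D_p(\Omega)$ betrays its origin: the factor $d_\Omega^{N}/(N|\Omega|)$ together with the kernel $|x|^{1-N}$ is the standard Gilbarg--Trudinger convex-domain Riesz-potential estimate (your Step~1), and the factor $(A_r A_q A_{p'})^{N}$ is precisely the sharp Young constant on $\mathbb{R}^N$ (your Step~2). Your verification that $A_m^2 = m^{2/m-1}(m-1)^{1-1/m}$ matches the Babenko--Beckner constant, that $r=qp/((q-1)p+q)$ is the Young exponent, and that the subcritical restriction on $p$ is exactly what makes $|x|^{1-N}\in L^r(U)$ (equivalently $r<N/(N-1)$) are all correct. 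One cosmetic remark: in your ``main obstacle'' paragraph you say that using the elementary Young inequality would ``weaken the estimate''; to avoid any ambiguity, note that $(A_rA_qA_{p'})^N\le 1$, so replacing it by $1$ enlarges $D_p(\Omega)$ and hence gives a worse (though still valid) bound---which is indeed what you mean.
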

\begin{theo}\label{theo:mizuguchCp} 
	Let $\Omega\subset\mathbb{R}^N$ be a bounded domain,
	and let $p$ and $q$ satisfy $1\leq q\leq p\leq\infty$.
	Suppose that there exists a finite number of bounded domains $\Omega_i~(i=1,2,3,\cdots, n)$ satisfying \eqref{omega1}.
	Moreover, suppose that for every $\Omega_i~(i=1,2,3,\cdots, n)$ there exist constants $D_p(\Omega_i)$ such that
	\begin{align}\label{mainine}
	\|u-u_{\Omega_i}\|_{L^{p}(\Omega_i)}\leq D_{p}(\Omega_i)\|\nabla u\|_{L^q(\Omega_i)} ~~~\text{for all}~u\in W^{1,q}(\Omega_i).
	\end{align}
	Then, 
	\begin{align}\label{aim}
	\left(\int_{\Omega}|u(x)|^pdx\right)^{\frac{1}{p}}\leq C_p'(\Omega)\left(\int_{\Omega}|u(x)|^qdx+\int_{\Omega}|\nabla u(x)|^qdx\right)^{\frac{1}{q}}
	\end{align}	
	holds for
	\begin{align}
	C_p'(\Omega)=\begin{cases}
	\max\left(1,~\displaystyle\max_{1\leq i\leq n}D_{\infty}(\Omega_i)\right) & (p=q=\infty)\\[2mm]
	2^{1-\frac{1}{q}}\max\left(\displaystyle\max_{1\leq i\leq n}|\Omega_i|^{\frac{1}{p}-\frac{1}{q}},~\displaystyle\max_{1\leq i\leq n}D_p(\Omega_i)\right) & (otherwise),
	\end{cases}
	\label{sec3:theo:cp}
	\end{align}
	where this formula is understood with $1/\infty=0$ when $p=\infty$ and/or $q=\infty$.
\end{theo}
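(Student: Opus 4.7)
The plan is to reduce the global bound on $\Omega$ to the local bounds assumed on each $\Omega_i$ via the decomposition \eqref{omega1}, treat the mean value $u_{\Omega_i}$ as the only extra ingredient needed to pass from a Poincaré-type estimate \eqref{mainine} to a full Sobolev embedding, and finally glue the pieces together using discrete $\ell^s$-inclusions and a power-mean inequality that together yield the exact constant $2^{1-1/q}$ in \eqref{sec3:theo:cp}.

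First, I would work locally on a fixed $\Omega_i$. Writing $u = (u - u_{\Omega_i}) + u_{\Omega_i}$ and applying the triangle inequality in $L^p(\Omega_i)$, the first summand is controlled directly by the hypothesis \eqref{mainine}, while the second equals $|u_{\Omega_i}|\cdot |\Omega_i|^{1/p}$. The mean value is in turn dominated via H\"older's inequality applied to $\int_{\Omega_i} u\,dx$ with exponents $q$ and $q/(q-1)$, giving $|u_{\Omega_i}|\leq |\Omega_i|^{-1/q}\|u\|_{L^q(\Omega_i)}$, and hence the extra factor $|\Omega_i|^{1/p-1/q}$. Combining and introducing
\begin{equation*}
M := \max\Bigl(\max_{1\leq i\leq n} D_p(\Omega_i),\ \max_{1\leq i\leq n} |\Omega_i|^{1/p-1/q}\Bigr),
\end{equation*}
I get the clean local estimate $\|u\|_{L^p(\Omega_i)} \leq M\bigl(\|\nabla u\|_{L^q(\Omega_i)} + \|u\|_{L^q(\Omega_i)}\bigr)$ for every $i$.

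Next I would upgrade the $+$ to a $q$-th-power sum by the elementary power-mean inequality $a+b \leq 2^{1-1/q}(a^q+b^q)^{1/q}$ (valid for $a,b\geq 0$, $q\geq 1$), yielding
\begin{equation*}
\|u\|_{L^p(\Omega_i)} \leq 2^{1-1/q}M\,\bigl(\|\nabla u\|_{L^q(\Omega_i)}^q + \|u\|_{L^q(\Omega_i)}^q\bigr)^{1/q}.
\end{equation*}
Raising to the $p$-th power, summing over $i$, and using that $\|u\|_{L^p(\Omega)}^p=\sum_i\|u\|_{L^p(\Omega_i)}^p$ by \eqref{omega1}, the right-hand side becomes a sum of $(c_i^q)^{p/q}$ with $c_i^q = \|\nabla u\|_{L^q(\Omega_i)}^q + \|u\|_{L^q(\Omega_i)}^q$. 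Since $p/q\geq 1$, the scalar inequality $\sum y_i^{p/q} \leq (\sum y_i)^{p/q}$ (i.e.\ $\ell^1\hookrightarrow \ell^{p/q}$ with the stated norm ordering) lets me collapse the sum and recover the global $L^q$-norms of $u$ and $\nabla u$. Taking $p$-th roots then gives exactly \eqref{aim} with the constant $2^{1-1/q}M$ claimed in \eqref{sec3:theo:cp}.

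Finally I would address the degenerate case $p=q=\infty$ separately, where the $1/\infty=0$ convention makes the factor $|\Omega_i|^{1/p-1/q}$ collapse to $1$; the triangle inequality on each $\Omega_i$ together with $|u_{\Omega_i}|\leq \|u\|_{L^\infty(\Omega_i)}$ and the hypothesis directly produces the formula in the first branch of \eqref{sec3:theo:cp}. I do not expect a real obstacle here; the only point that requires care is the bookkeeping of exponents when passing from $(a+b)^p$ to $(a^q+b^q)^{p/q}$ — picking the power-mean inequality at the right moment (before raising to power $p$ and summing, not after) is what gives the sharper constant $2^{1-1/q}$ rather than the naive $2^{1-1/p}$ that a different ordering would produce.
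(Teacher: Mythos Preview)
The paper does not supply a proof of this theorem: it is quoted verbatim from \cite{mizuguchi2017estimation} (their Theorem~3.3) and used as a black box in Appendix~\ref{sec:constants}. So there is no in-paper argument to compare your proposal against.

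As to correctness, your outline is sound and is in fact the natural route (and the one taken in the cited reference): split $u=(u-u_{\Omega_i})+u_{\Omega_i}$ on each piece, control the first term by the hypothesis \eqref{mainine} and the constant term by H\"older, combine via the power-mean bound $a+b\le 2^{1-1/q}(a^q+b^q)^{1/q}$, and then sum over $i$ using $\sum y_i^{p/q}\le(\sum y_i)^{p/q}$ for $p/q\ge1$. Your remark about the ordering of the power-mean step being responsible for the constant $2^{1-1/q}$ (rather than $2^{1-1/p}$) is also correct. One small omission: the ``otherwise'' branch includes the case $p=\infty$, $q<\infty$, where you cannot literally raise to the $p$-th power and sum; there you should instead bound each $\|u\|_{L^\infty(\Omega_i)}$ by $2^{1-1/q}M\bigl(\|\nabla u\|_{L^q(\Omega)}^q+\|u\|_{L^q(\Omega)}^q\bigr)^{1/q}$ directly and take the maximum over $i$. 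This is a trivial adjustment and does not affect the constant.
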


Using Theorems \ref{theo:mizuguchiDp} and \ref{theo:mizuguchCp} with $ q=2 $, we can estimate required bounds for embedding constants.
Indeed, $ C_p'(\Omega) $ in \eqref{aim} with $ q=2 $ becomes an upper bound for $ C_{p}(\Omega,\Gamma_D) $ for any choices of $ \Gamma_D $.

\begin{acknowledgements}
	We express our sincere thanks to Prof. Kazunaga Tanaka (Waseda University, Japan) for his helpful advice, Prof. Mitsuhiro T.~Nakao (Kyushu University / Waseda University, Japan) for insightful comments, and Taisei Asai (Waseda University, Japan) for helping to create easy-to-read figures and tables.
	We also express our profound gratitude to an anonymous referee for highly insightful comments and suggestions.
\end{acknowledgements}

%
%

\bibliographystyle{spmpsci}      
\bibliography{ref}   


\end{document}